\def\frkm{{\mathfrak{m}}}
\def\depth{{\mathop{\mathrm{depth}}}}
\def\GrMod{\operatorname{\mathsf{GrMod}}}
\def\turn!{\textup{!`}}
\def\op{\textup{op}}
\def\soc{\operatorname{soc}}
\def\pd{\mathop{\mathrm{pd}}\limits}
\def\grpd{\mathop{\mathrm{gr.pd}}\limits}
\def\injdim{\mathop{\mathrm{id}}\limits}
\def\grinjdim{\mathop{\mathrm{gr.id}}\limits}
\def\grCM{\operatorname{\mathsf{CM}}^{\Bbb{Z}}}
\def\grmod{\operatorname{mod}^{\Bbb{Z}}}
\def\mrb{\mathrm{b}}
\def\Sing{\operatorname{Sing}}
\def\grSing{\operatorname{Sing}^{\ZZ}}
\def\grSpec{\operatorname{Spec^{\Bbb{Z}}}}
\def\stabCM{\operatorname{\underline{\mathsf{CM}}}}
\def\stabgrCM{\operatorname{\underline{\mathsf{CM}}}^{\Bbb{Z}}}
\def\stabgrmod{\operatorname{\underline{mod}}^{\Bbb{Z}}}
\def\thick{\mathop{\mathsf{thick}}\nolimits}
\def\kk{{\mathbf k}}
\def\NN{{\Bbb N}}
\def\ZZ{{\Bbb Z}}
\def\cA{{\cal A}}
\def\cH{{\cal H}}
\def\sfq{{\mathsf{q}}}
\def\sfC{{\mathsf{C}}}
\def\sfD{{\mathsf{D}}}
\def\sfI{{\mathsf{I}}}
\def\sfK{{\mathsf{K}}}
\def\sfT{{\mathsf{T}}}
\def\sfU{{\mathsf{U}}}
\def\sfV{{\mathsf{V}}}
\def\tuD{{\textup{D}}}
\def\tuH{{\textup{H}}}
\def\tuT{{\textup{T}}}
\def\frki{{\mathfrak{i}}}
\def\frkp{{\mathfrak{p}}}
\def\frkq{{\mathfrak{q}}}
\def\frks{{\mathfrak{s}}}
\def\id{\operatorname{id}}
\def\op{\operatorname{op}}
\def\mod{\operatorname{mod}}
\def\Mod{\operatorname{Mod}}
\def\GrMod{\operatorname{Mod}^{\mathbb{Z}}}
\def\Ker{\mathop{\mathrm{Ker}}\nolimits}
\def\proj{\operatorname{proj}}
\def\grproj{\operatorname{proj}^{\mathbb{Z}}}
\def\GrInj{\operatorname{Inj}^{\mathbb{Z}}}
\def\Proj{\operatorname{Proj}} 
\def\Inj{\operatorname{Inj}}
\def\grInj{\operatorname{Inj}^{\mathbb{Z}}}
\def\add{\operatorname{add}}
\def\Coker{\operatorname{Cok}}
\def\Hom{\operatorname{Hom}}
\def\cpxgrHom{\operatorname{HOM}^{\bullet}}
\def\grHom{\operatorname{HOM}}
\def\End{\operatorname{End}}
\def\Ext{\operatorname{Ext}}
\def\grExt{\operatorname{EXT}}
\def\cpxHom{\operatorname{Hom}^{\bullet}}
\def\gldim{\operatorname{gldim}}
\newcommand{\RHom}{\operatorname{\Bbb{R}Hom}}
\def\grRHom{\operatorname{\Bbb{R}HOM}}
\newcommand{\lotimes}{\otimes^{\Bbb{L}}}
\newcommand{\cone}{\operatorname{\mathsf{cn}}}
\def\RHom{\operatorname{\mathbb{R}Hom}}
\def\grRHom{\operatorname{\mathbb{R}HOM}}
\newtheorem{lemma}{Lemma}[section]
\newtheorem{proposition}[lemma]{Proposition}
\newtheorem{theorem}[lemma]{Theorem}
\newtheorem{corollary}[lemma]{Corollary}
\theoremstyle{definition}
\newtheorem{remark}[lemma]{Remark}
\newtheorem{example}[lemma]{Example}
\newtheorem{definition}[lemma]{Definition}
\newtheorem{question}[lemma]{Question}
\theoremstyle{remark}
\definecolor{ao}{rgb}{0.0, 0.5, 0.0}
\title{
The Happel functor and 
homologically well-graded Iwanaga-Gorenstein algebras
}
\author{Hiroyuki Minamoto and Kota Yamaura}
\begin{document}

\maketitle
\begin{abstract}
Happel constructed a fully faithful  functor $\cH :\sfD^{\mrb}(\mod \Lambda) \to \stabgrmod \tuT(\Lambda)$
for a finite dimensional algebra $\Lambda$. 
He also showed that this functor $\cH$ gives an equivalence precisely when $\gldim \Lambda < \infty$. 
 Thus if $\cH$ gives an equivalence, then it provides a canonical tilting object $\cH (\Lambda)$ of $\grmod \tuT(\Lambda)$.



In this paper we generalize  the Happel functor $\cH$ in the case where 
$\tuT(\Lambda)$  is replaced with  a finitely graded IG-algebra $A$. 
We study when this functor is fully faithful or is an equivalence. 
For this purpose we introduce the notion of homologically well-graded (hwg) IG-algebra, 
which can be characterized as an  algebra posses a homological symmetry which, a posteriori, guarantee that the algebra is IG.  
We prove that hwg IG-algebras is precisely the class of finitely  graded IG-algebras that the Happel functor is fully faithful. 
We also identify the class that the Happel functor gives an equivalence. 
As a consequence of our result, 
we see  that  if $\cH$ gives an equivalence, then it provides a canonical tilting object $\cH (T)$ of $\stabgrCM A$. 
For some special classes of finitely graded IG-algebras, 
our tilting objects $\cH(T)$ coincide with tilting object constructed in previous works. 
\end{abstract}

\tableofcontents

\newpage

\section{Introduction}\label{Introduction}

A central theme in the representation theory of
Iwanaga-Gorenstein (IG) algebra is the study of the stable category of Cohen-Macaulay (CM) modules. 
It was initiated by Auslander-Reiten \cite{AR}, Happel \cite{Happel} 
 and Buchweitz \cite{Buchweitz}, and has been studied by many researchers. 
%
The stable category of CM-modules $\stabCM A$ has a canonical structure of triangulated category. 
It is equivalent to the  singular derived category $\Sing A$ and is a triangulated category that is also important in 
 algebraic geometry and mathematical physics.  
The situation is the same with graded IG-algebras and the stable category $\stabgrCM A$ of graded CM-modules. 
Recently, tilting theory and cluster tilting theory  of the stable categories $\stabCM A$ and 
$\stabgrCM A$ 
are extensively studied and has many interaction with other areas 
(see the excellent survey \cite{Iyama: ICM}).

As will be soon recalled, the original Happel functor $\cH$ connects two important triangulated categories: 
the derived category $\sfD^{\mrb}(\mod \Lambda)$ and the stable category $\stabgrmod \tuT(\Lambda)$, 
so  served as a powerful tool to study these categories.
%
From tilting theoretic point of  view,  
the functor $\cH$ provides the existence of a canonical tilting object $\cH(\Lambda)$ in the stable category 
$\stabgrmod \tuT(\Lambda)$.

In this paper and \cite{higehaji}, we generalize the Happel functor $\cH$ by replacing the self-injective algebra $\tuT(\Lambda)$ 
 with a finitely graded IG-algebra $A=\bigoplus_{i = 0}^{\ell} A_{i}$. 
 In this paper, one of our main concern is the question when the canonical object $\cH(T)$ (which is $\cH(\Lambda)$ in the simplest case) 
  is a tilting object of $\stabgrCM A$. 
  For this purpose we introduce the notion of \emph{homologically well-graded} (hwg) algebras. 
 Our main concern in the paper is  a finitely graded  hwg IG-algebra $A= \bigoplus_{i = 0}^{\ell} A_{i}$, that is a finitely graded algebra 
 which is IG as well as hwg. 
 We provide several characterization of a finitely graded hwg IG-algebra.
  One of our main results characterizes  
a  hwg IG-algebra $A$ as a   finitely graded IG-algebra $A$  such that  $\cH(T)$ is a  tilting object, 
  whose  endomorphism algebra is  the Beilinson algebra $\nabla A$ - an algebra canonically constructed from $A$ (see \eqref{Beilinson algebra}).  
Another  result characterizes a finitely graded hwg IG-algebra 
as an  algebra posses a homological symmetry which, a posteriori, guarantee that the algebra is IG.  
This phenomena is looked as a generalization of the fact that 
 a Frobenius algebra is an algebra posses a symmetry which, guarantee that the algebra is self-injective. 
 Since a reason why  Frobenius algebras are of importance in  several areas is its  symmetry, 
 we can expect that hwg IG-algebra also play a basic role of other areas.

 In \cite{higehaji} we make use of the generalized Happel functor to study general aspect of  finitely graded IG-algebras and 
 their stable categories.    
For example, we show that the Grothendieck group $K_{0}(\stabgrCM A)$ is free of finite rank. 
We  expect that the generalized Happel functor can become an indispensable tool to study finitely graded IG-algebras.

\subsection{Results}

Now we explain the main results  and the notations used throughout the paper.

First we  recall, the original Happel functor $\cH$. 
Let  $\Lambda$ be a finite dimensional algebra  over some field $\kk$ 
and $\tuT(\Lambda) := \Lambda \oplus \tuD(\Lambda)$ the trivial extension algebra 
of $\Lambda$ by the bimodule  $\tuD(\Lambda) = \Hom_{\kk}(\Lambda, \kk)$,  
equipped with the grading $\deg \Lambda = 0, \deg \tuD(\Lambda) = 1$. 
In his pioneering work, Happel \cite{Happel book, Happel-1991} constructed a fully faithful triangulated functor 
\[ 
\cH : \sfD^{\mrb}(\mod \Lambda) \hookrightarrow \stabgrmod \tuT(\Lambda) 
\] 
and showed that it gives an equivalence if and only if $\gldim \Lambda < \infty$. 
Thus if $\cH$ gives an equivalence, then it provides a canonical tilting object $\cH (\Lambda)$ of $\grmod \tuT(\Lambda)$.

Although it looks like that the Happel functor $\cH$ is determined from $\Lambda$, 
there is a way to construct $\cH$ starting from $\tuT(\Lambda)$. 
In Section \ref{the Happel functor}, 
we generalize the Happel functor $\cH$ 
to the case where $\tuT(\Lambda)$ is replaced by a finitely graded IG-algebra  $A= \bigoplus_{ i = 0}^{\ell} A_{i}$. 
The generalized Happel functor $\cH$ has $\stabgrCM A$ as its codomain. 
The domain is the derived category $\sfD^{\mrb}(\mod^{[0, \ell -1]}A)$ of the abelian category  $\mod^{[0,\ell -1]}A \subset \grmod A$ which is the full subcategory consisting of $M = \bigoplus_{i \in \ZZ} M_{i}$ such that $M_{i} = 0$ for $i \notin [0, \ell -1]$. 
\begin{equation}\label{Intoroduction the Happel functor 3}
\cH : \sfD^{\mrb}(\mod^{[0,\ell -1]}A) \to \stabgrCM A. 
\end{equation}
The first fundamental  question about $\cH$ is  the following. 

\begin{question}
When is it fully faithful or an equivalence?
\end{question}

We focus on  the special case where $A$ is a graded self-injective algebra. 
Recall that 
graded  Frobenius algebras is a special class of graded self-injective algebras (for the definition, see Example \ref{Example graded Frobenius}).  
We can deduce an answer to the question from previous works
 by Chen \cite{Chen trivial}, Mori with the first author \cite{MM} and the second author \cite{Yamaura}. 
Namely,  the functor $\cH$ is fully faithful if and only if $A$ is  graded Frobenius.  
Moreover, if this is the case,  $\cH$ is  an equivalence if and only if $\gldim A_{0} < \infty$.

To state the second question, we need to introduce a graded $A$-module  $T$, 
which has been observed to play   an important role in the study of the Happel functor.  
\begin{equation}\label{canonical construction}
T :=  \bigoplus_{ i= 0}^{\ell-1} A(-i)_{\leq \ell-1} \in \mod^{[0, \ell -1]} A.
\end{equation}
The endomorphism algebra $\nabla A := \End (T)$ is called the \emph{Beilinson} algebra. 
We may identify it with the upper triangular matrix algebra below  via canonical isomorphism. 
\begin{equation}\label{Beilinson algebra} 
\nabla A:= \End_{\grmod A} (T) \cong 
\begin{pmatrix} 
A_{0} & A_{1} & \cdots & A_{\ell -1} \\
0         & A_{0} & \cdots  & A_{\ell -2} \\
\vdots & \vdots     &        & \vdots \\
0         & 0   & \cdots  & A_{0}
\end{pmatrix}.
\end{equation}
We denote by $\gamma$ the algebra homomorphism induced  by  the Happel functor $\cH$. 
\begin{equation}\label{Introduction morphism gamma}
\gamma: \nabla A = \End_{\grmod A}(T) \xrightarrow{ \cH_{T,T}}  \End_{\stabgrCM A} \cH(T).
\end{equation}

We note that $T$ is a  progenerator of $\mod^{[0, \ell-1]}A$. 
Moreover, by Morita theory, the functor $\sfq: = \Hom_{\mod^{[0, \ell -1]} A }(T, -)$ gives an equivalence  
\begin{equation}\label{Introduction nabla}
\mathsf{q}: \mod^{[0, \ell -1]} A \cong \mod \nabla A \textup{ such that } \sfq(T) = \nabla A.  
\end{equation}
Thus, we may regard the Happel functor $\cH$ as an exact functor from $\sfD^{\mrb}(\mod \nabla A) $ to $\stabgrCM A$. 
\[
\cH : \sfD^{\mrb}(\mod \nabla A) \xrightarrow{ \sfq^{-1} \ \cong \ } \sfD^{\mrb}(\mod^{[0, \ell-1]}A) \to \stabgrCM A.  
\]

The image $\cH(T)$  have been studied by many researchers. 
In the case where $A$ is a graded self-injective algebra, 
it is shown in \cite{Chen trivial, MM, Yamaura} that $\cH(T)$  is a tilting object of $\stabgrmod A$ if and only if $\gldim A_{0} < \infty$. 
Moreover, the morphism $\gamma$ is an isomorphism if and only if $A$ is graded Frobenius.  

%

 
As for graded IG-algebras $A$,  
it has been  shown that the object $\cH(T) \in \stabgrCM A$ is a tilting object 
or relates to  a construction of a tilting object of $\stabgrCM A$ 
in many other cases \cite{Kimura-1, Lu, LZ, MU}.  
However we would like to mention  that for a graded IG-algebra $A$, 
the graded module $\cH(T)$ does not give a tilting object of $\stabgrCM A$ in general (see for example \cite[Example 3.7]{LZ}). 
Thus our second question naturally arises. 

\begin{question}
When is  $\cH(T)$  a tilting object of $\stabgrCM A$ which satisfies the condition that  the map $\gamma$ is an isomorphism?
\end{question}

The answers of above two questions are given by 
the notion of  \textit{homologically well-graded (hwg)} algebras. 
The prototypical example of hwg algebras is the trivial extension algebra $\tuT(\Lambda) = \Lambda \oplus \tuD(\Lambda)$ of a finite dimensional algebra $\Lambda$. 
In the paper \cite{Chen trivial} mentioned above, 
Chen introduced the notion of well-gradedness for finitely graded algebra 
and 
showed that a well-graded self-injective algebra $A$ is graded Morita equivalent to $\tuT(\nabla A)$. 
Thus graded representation theory of $A$ is equivalent to that of $\tuT(\nabla A)$ and in particular 
Happel's  results can be applied. 
However for a finitely graded IG-algebra $A$ which is not self-injective, 
 well-gradedness is not enough to control CM-representation theory.
 We observed that  a key  to establish the Happel embedding is the following equation 
 \[
 \Hom_{\grmod \tuT(\Lambda)}(\Lambda, \tuT(\Lambda)(i)) = 0 \textup{ for } i \neq 1. 
 \]
The relationship between the equation and  well-gradedness is explained  in the begining of Section \ref{subsubsection: hwg algebra}. 
The point is that the equation admits a natural homological generalization, which yields  the definition of a hwg algebra.

Our main results show that a hwg IG-algebra gives complete answers to the above two questions.

%
%
%
%
%
%
%

\begin{theorem}[{Theorem \ref{main theorem 2}, Theorem \ref{tilting theorem}}]\label{introduction: main theorem 2} 
Assume that $\kk$ is a commutative  Noetherian ring and $A= \bigoplus_{i = 0}^{\ell} A_{i}$ is an 
IG-algebra that is finitely generated as a $\kk$-module. 
Then the following conditions are equivalent.
\begin{enumerate}[(1)]
\item 
$A$ is hwg  
(resp. 
$A$ is hwg  and $A_{0}$ satisfies the condition (F)).

\item 
The Happel  functor $\cH$ is fully faithful 
(resp. equivalence). 

\item 
The morphism $\gamma$ is an isomorphism 
and $\Hom_{\stabgrCM A}(\cH(T), \cH(T)[n]) = 0$ for $n \neq 0$ 
(resp.   $\gamma$ is an isomorphism and $\cH(T)$ is a tilting object of $\stabgrCM A$). 

\end{enumerate}
\end{theorem}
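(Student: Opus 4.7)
The plan is to split the three-way equivalence into (2) $\Leftrightarrow$ (3), which is essentially formal from the universal property of derived categories, and (1) $\Leftrightarrow$ (3), which is where the hwg hypothesis enters substantively. Under the Morita equivalence $\sfq$ of \eqref{Introduction nabla}, $T$ corresponds to the regular module $\nabla A$, a compact projective generator of $\sfD^\mrb(\mod \nabla A)$. A triangle functor out of this derived category is fully faithful if and only if the induced map
\[
\Hom_{\sfD^\mrb(\mod \nabla A)}(T, T[n]) \longrightarrow \Hom_{\stabgrCM A}(\cH(T), \cH(T)[n])
\]
is an isomorphism for every $n\in\ZZ$. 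The source is $\nabla A$ when $n=0$ (which recovers $\gamma$) and zero otherwise, so this unpacks exactly to condition (3). For the ``equivalence'' enhancement: if $\cH$ is essentially surjective then $\cH(T)$ is automatically a tilting object transported from $T$; conversely, full faithfulness together with the tilting property of $\cH(T)$ forces essential surjectivity, because the image of $\cH$ then contains a classical generator of $\stabgrCM A$ and is closed under shifts and cones.

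For (1) $\Rightarrow$ (3), using the explicit construction of $\cH$ from Section~\ref{Happel's functor}, I would compute $\Hom_{\stabgrCM A}(\cH(T),\cH(T)[n])$ by resolving $T=\bigoplus_{i=0}^{\ell-1} A(-i)_{\leq\ell-1}$ and pushing the resolution through $\cH$. A spectral sequence should reduce these stable Hom groups to bigraded Ext groups of the form $\Ext^{*}_{\grmod A}(A(-i)_{\leq\ell-1}, A(-j)_{\leq\ell-1}(m))$, and ultimately to terms involving $\Ext^{*}_{\grmod A}(A_0, A(m))$ after devissage along the internal grading. The hwg condition is precisely the vanishing of these terms outside a distinguished ``diagonal'' --- the homological generalisation of the prototypical identity $\Hom_{\grmod\tuT(\Lambda)}(\Lambda,\tuT(\Lambda)(i))=0$ for $i\neq 1$ --- so matching the surviving contributions with the graded components of $\nabla A$ simultaneously forces $\gamma$ to be an isomorphism and kills the higher Homs. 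The converse (3) $\Rightarrow$ (1) is the same spectral sequence read in reverse: the stated Hom vanishings in $\stabgrCM A$ feed back into the Ext vanishings that define hwg.

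For the ``resp.'' upgrade, condition (F) on $A_0$ should be exactly what is needed to promote the partial tilting object $\cH(T)$ to a classical generator of $\stabgrCM A$, by providing sufficient finiteness of projective (or injective) dimensions to resolve every twist $A(j)$ in terms of objects built from $T$; combined with (1), this then yields essential surjectivity and hence (2). The main obstacle will be the Hom-computation step inside $\stabgrCM A$: unlike in a derived category, morphisms are quotients by those factoring through projectives, and $\cH(T)$ is not concentrated in a single degree, so the spectral sequence must be assembled either from a complete resolution or via the singularity-category description $\stabgrCM A \simeq \grSing A$, while simultaneously tracking the internal grading and the cohomological degree so that the bigraded vanishing pattern imposed by hwg aligns precisely with the Hom vanishing demanded by (3).
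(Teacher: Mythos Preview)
Your equivalence (2) $\Leftrightarrow$ (3) is \emph{not} formal, and this is the main gap. You claim that a triangle functor out of $\sfD^{\mrb}(\mod \nabla A)$ is fully faithful as soon as it induces isomorphisms on $\Hom(T,T[n])$ for all $n$. That argument only gives full faithfulness on $\thick T = \sfK^{\mrb}(\proj \nabla A)$, and this equals $\sfD^{\mrb}(\mod \nabla A)$ precisely when $\gldim \nabla A < \infty$, i.e.\ when $A_{0}$ satisfies condition~(F). But in the non-``resp.'' case that hypothesis is \emph{not} available --- indeed, the whole point of the first equivalence is that (1)--(3) hold with no finiteness assumption on $A_0$. (For a toy counterexample to your general principle, take $\sfD = \sfD_1 \times \sfD_2$, $G \in \sfD_1$, and $F$ the projection onto $\sfD_1$.) The paper does not attempt a direct (3) $\Rightarrow$ (2); instead it routes (3) $\Rightarrow$ (1) $\Rightarrow$ (2). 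The implication (1) $\Rightarrow$ (2) is an Orlov-style semi-orthogonal decomposition argument (Proposition~\ref{201708231410}): hwg gives $\sfD^{\mrb}(\mod^{[0,\ell-1]}A) \perp \sfK^{\mrb}(\proj^{\geq 0}A)$, and combining this with the trivial orthogonality $\sfK^{\mrb}(\proj^{<0}A) \perp \sfD^{\mrb}(\mod^{\geq 0}A)$ lets one peel off both halves of $\sfK^{\mrb}(\grproj A)$ in the Verdier quotient while keeping $\varpi$ fully faithful.

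Your (3) $\Rightarrow$ (1) via ``reversing the spectral sequence'' is also a gap: morphisms in $\stabgrCM A \cong \grSing A$ are computed in a Verdier localisation, so vanishing there does not feed back to Ext-vanishing in $\grmod A$ in any direct way. The paper's argument is structural rather than computational. After a quasi-Veronese reduction to $\ell = 1$ (so $A = \Lambda \oplus C$ and $T = \Lambda$), one first uses condition~(3) together with the result $\Ker \varpi \subset \sfK^{\mrb}(\proj \Lambda)$ from \cite{adasore} to conclude $\Ker \varpi = 0$. Then one takes an injective resolution $I$ of $A$ and its decomposition $0 \to \frks_{\leq 0}I \to I \to \frks_{1}I \to 0$; the module-finite hypothesis over Noetherian $\kk$ (via Lemma~\ref{module finite lemma} and Lemma~\ref{finite cohomology lemma}) forces $\frks_{\leq 0}I$ into $\sfD^{\mrb}(\mod \Lambda) \cap \sfK^{\mrb}(\GrInj A) = \sfK^{\mrb}(\grproj A)$ by Iwanaga's lemma, hence into $\Ker \varpi$, hence $\frks_{\leq 0}I = 0$ and $A$ is hwg. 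None of these ingredients appear in your sketch.
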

The condition (F) is defined in Definition \ref{definition: finite local dimension}. 
It is  a condition on finiteness of homological dimensions on $A_{0}$ which is weaker than the condition $\gldim A_{0} < \infty$. 
But  this condition  is equivalent to $\gldim A_{0}< \infty$ in the case where $\kk$ is a complete local ring and hence in particular is a field. 

In the case where $\kk$ is a field, 
as we mentioned above a typical example of hwg algebra is $\tuT(\Lambda) $ for some finite dimensional algebra $\Lambda$. 
We can apply  the equivalence(s) (1) $\Leftrightarrow$ (2) to it and recover Happel's original result. 
However Happel's  proof of the implication $(1) \Rightarrow (2)$ of respective cases made use of  the fact that the stable category $\stabgrmod \tuT(\Lambda)$ has Auslandr-Reiten triangles. 
Since we do not know that  Auslander-Reiten triangles may not make sense  in the case where $\kk$ is not a field, 
we can not use the Happel method and need to develop our method. 

Our method relies on the decompositions of complexes of graded injective or projective modules established in \cite{adasore}.  
As a by-product we are able to deal with  the case where $A$ is not necessary IG. 
In  Lemma \ref{seiri lemma 2} and Proposition \ref{seiri proposition 2}, 
we study the relation between existence of a generator in the singular derived category of $A$ and 
the finiteness of homological dimensions of $A_{0}$. 
To the  best of our knowledge, all previous results about such a relation  only in the case of  graded  IG-algebras $A$. 
Thus, although it is beyond the main theme of the paper,  these results are of their  own interest.

%

\bigskip

To finish the introduction, we explain other results of the paper.  
A graded algebra which is both hwg and IG has a nice structure.
We show that if a finitely graded algebra $A = \bigoplus_{i =0}^{\ell} A_{i}$ is hwg IG, 
then the subalgebra $A_{0}$ of degree $0$ elements is Noetherian and 
the highest degree submodule $A_{\ell}$ is a cotilting bimodule over $A_{0}$;  see Definition  \ref{definition of cotilting modules}. 

Recall that the key property of a cotilting bimdoule $C$  over a Noetherian algebra is that 
the duality $\RHom_{\Lambda}(-, C)$ gives a contravariant equivalence between the derived categories 
$\sfD^{\mrb}(\mod \Lambda)$ and $\sfD^{\mrb}(\mod \Lambda^{\op})$.

In Theorem \ref{main theorem} we give characterizations of hwg IG-algebras 
which are neither stated in terms of  the stable category nor the Happel functor. 
Among other things,  we verify that 
a hwg IG-algebra is precisely a finitely graded Noetherian  algebra $A = \bigoplus_{i =0}^{\ell}A_{i}$ which has the following properties: 
(i) the $A_{0}$-$A_{0}$-bimodule  $A_{\ell}$ is a cotilting bimodule. 
(ii) $A$ has  a homological symmetry given by the duality induced from the cotilting bimodule $A_{\ell}$.

In the case $\kk$ is a field and $\Lambda$ is a finite dimensional $\kk$-algebra, 
 the $\kk$-dual bimodule $\tuD(\Lambda)$ is an example of cotilting bimodule 
 and the duality induced by $\tuD(\Lambda)$ is nothing but the $\kk$-duality, 
 i.e., $\RHom_{\Lambda}(-, \tuD(\Lambda))  \cong \tuD(-)$. 
 Using this fact,  
 we observe  in Example \ref{Example graded Frobenius} that  a finite dimensional graded self-injective algebra $A$ is hwg if and only if it is
a graded Frobenius. 
 In this sense, 
a hwg IG-algebra can be looked  as a generalization of a graded Frobenius algebra obtained by replacing 
the bimodule $\tuD(\Lambda)$ with a general cotilting bimodule $C$.

It follows from a classical result by Fossum-Griffith-Reiten \cite[Theorem 4.32]{FGR} that 
when $\Lambda$ is a Noetherian algebra and $C$ is a cotilting bimodule, 
 the trivial extension algebra $A = \Lambda \oplus C$  is IG. 
We prove that if we equip $A$ with the grading $\deg \Lambda = 0, \deg C = 1$,  
then it become  hwg IG.  
Moreover, in Corollary \ref{ell = 1 corollary},  we show that a graded algebra $A = A_{0} \oplus A_{1}$ concentrated in degree $0,1$ 
is hwg IG if and only if it is obtained in such a way.

Now it is natural to recall the following result of commutative Gorenstein algebras 
due to Foxby \cite{Foxby} and Reiten \cite{Reiten}.  
Namely, 
the trivial extension algebra 
$A = \Lambda \oplus C$ of a commutative Noetherian local algebra $\Lambda$ by a (bi)module $C$ 
is IG if and only if $C$ is a cotilting (bi)module. 
Thus with our terminology this theorem says that,  in commutative  local setting, 
every graded IG-algebra $A = A_{0} \oplus A_{1}$ concentrated in degree $0,1$ is hwg IG.
We prove the same result is true for a commutative finitely graded IG-algebra. 

\begin{theorem}[{Theorem \ref{commutative Gorenstein theorem}}]
A commutative local finitely graded IG-algebra 
$A = \bigoplus_{i= 0}^{\ell} A_{i}$ is hwg. 
\end{theorem}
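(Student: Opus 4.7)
The plan is to verify the homologically well-graded condition for $A$ using the classical duality theory for commutative graded Gorenstein local rings. In the commutative setting, being IG is the same as being graded Gorenstein in the usual sense, so graded local duality and the canonical module apparatus is at our disposal. According to the discussion earlier in the paper, the hwg-IG condition amounts to two pieces of data: that $A_\ell$ is a cotilting bimodule over $A_0$, and that there is an associated homological symmetry between $A$ and $A_0$ controlled by $A_\ell$ in internal degree $\ell$.

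First I would record the preliminary structural observations. Since $A$ is commutative Noetherian local, so is the quotient $A_0 = A/A_{\geq 1}$. The top piece $A_\ell$ is a finitely generated $A_0$-module, as it is annihilated by the irrelevant ideal $A_{\geq 1}$. Being commutative local IG, $A$ is a graded Gorenstein local ring, so graded local duality applies and $A$ itself (up to shift) is a dualizing complex.

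The technical heart of the proof is a computation of $\RHom_A(A_0, A)$. Using graded local duality together with a minimal graded injective (or projective) resolution of $A$, the goal is to show that (a) this complex has cohomology in a single cohomological degree equal to $\depth A_0$, and (b) as a graded object it is concentrated in internal degree exactly $\ell$. Part (a) follows from Gorenstein self-duality. Part (b) is the key graded input: it uses that $A$ is supported in degrees $[0, \ell]$ with nonzero top $A_\ell$, together with the graded Nakayama lemma controlling the shifts appearing in a minimal graded resolution. From (a) and (b) one identifies $A_\ell$, up to shift, with the canonical module of $A_0$; the Foxby-Reiten theorem recalled earlier in the excerpt then yields that $A_\ell$ is a cotilting $A_0$-bimodule. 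Combined with the concentration from (a) and (b), this realises the hwg condition.

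The main obstacle is the internal-degree concentration in step (b): ungraded Gorenstein duality is blind to internal grading, so this part must be done by hand via careful degree-tracking along a minimal graded resolution. An alternative strategy I would try in parallel is induction on $\ell$. The base case $\ell = 1$ is covered by the Foxby-Reiten theorem together with Corollary \ref{ell = 1 corollary}. For the inductive step, one would factor out an appropriate homogeneous ideal of $A$ to reduce to a commutative local IG algebra of strictly smaller top degree, and then propagate the hwg property through the long exact sequence of $\Ext_A(A_0,-)$ applied to the resulting short exact sequence of graded $A$-modules.
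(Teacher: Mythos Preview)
Your plan identifies the right target: the hwg condition is precisely the internal-degree concentration statement you call (b), namely that $\grRHom_A(A_0, A)_j = 0$ for $j \neq \ell$. But your proposal does not actually prove (b); the phrase ``degree-tracking along a minimal graded resolution via the graded Nakayama lemma'' is too vague to constitute an argument, and it is not clear what mechanism forces the Ext groups to sit in internal degree exactly $\ell$ rather than merely somewhere in $[0,\ell]$. Your claim (a) is both inessential (once (b) holds you are done, by Proposition~\ref{proposition hwg algebra}, without needing cohomological concentration or the identification of $A_\ell$ with a canonical module) and misstated: the cohomology is concentrated in cohomological degree $0$, not $\depth A_0$. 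Indeed, every homogeneous $A$-regular element has degree $0$ (since $A_{\geq 1}$ is nilpotent), so a maximal $A$-regular sequence of length $d=\dim A$ is also $A_0$-regular, whence $A_0$ is maximal Cohen--Macaulay over $A$ and $\Ext^{>0}_A(A_0,A)=0$. Your alternative induction on $\ell$ also has a gap: quotienting by $A_\ell$ (or any homogeneous ideal that lowers the top degree) has no reason to preserve the Gorenstein property, and you do not say which ideal to use.

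The paper's proof takes a different and more concrete route. It works with the graded Bass numbers $\mu^n_i(\frkp, A)$ appearing in the minimal graded injective resolution of $A$, and shows they vanish for $i\neq\ell$. The key reduction is by induction on $\dim A$ rather than on $\ell$: one mods out a homogeneous $A$-regular element, which is necessarily of degree $0$, so the quotient stays Gorenstein with the same top degree $\ell$. In the Artinian base case one directly compares the supports of $A$ and $E_A(k)=\grHom_{A_0}(A,E_{A_0}(k))$ to obtain $A\cong E_A(k)(-\ell)$. The passage from the maximal ideal to an arbitrary graded prime is then handled by a graded version of Bass's one-up lemma. This reduction-to-Artinian plus one-up-lemma argument is the missing mechanism in your step (b).
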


\subsection{Organization of the paper}
The paper is organized as follows. 
In Section \ref{graded modules}, first we fix notations for graded modules and their derived categories. 
Then we recall a decomposition of a complex $I \in \sfC(\GrInj A)$ of graded injective modules introduced in \cite{adasore}.  
In Section \ref{the Happel functor} we give the construction of the Happel functor and recall related results.  
In Section \ref{Homologically well-graded algebras} we introduce a notion of homologically well-graded (hwg) algebras. 
In Section \ref{characterization} we give characterizations of hwg algebras and show that it can be looked as a generalization of graded Frobenius algebras.   
In Section \ref{the Happel functor and a homologically well-graded IG-algebra}, 
we give characterizations of fully faithfulness of $\cH$ (Theorem \ref{main theorem 2}) 
and characterizations of when $\cH$ gives an equivalence (Theorem \ref{tilting theorem}). 
In Section \ref{Examples and constructions}, we give several examples and constructions of hwg IG-algebras.  
We observe that being hwg IG is more robust than being IG. 
For example, even though taking Veronese algebras and Segre products do not preserve IG-algebras,  
these operations preserve hwg IG-algebras. 
In Section \ref{commutative case}, 
we focus on the  commutative case and generalize a result of  Fossum-Griffith-Reiten, Foxby and Reiten  \cite{FGR, Foxby, Reiten}. 
 In Section \ref{Graded derived Frobenius extensions} 
 we discuss the definition of hwg IG-algebras.

\vspace{10pt}
\noindent
\textbf{Acknowledgment}
The authors thank anonymous referee for his/her careful reading and numerous comments 
about mathematical contents and readability. 
The first author  was partially  supported by JSPS KAKENHI Grant Number 26610009.
The second author  was partially  supported by JSPS KAKENHI Grant Number 26800007.

\subsection{Notation and convention}

\subsubsection{Algebras, modules and bimodules}

Throughout the paper $\kk$ denotes a commutative ring. 
An algebra $\Lambda$ is always a $\kk$-algebra. 
Unless otherwise stated, the word  ``$\Lambda$-modules" means right $\Lambda$-modules.  
We denote by $\Mod \Lambda$ the category of $\Lambda$-modules. 
We denote by $\Proj \Lambda$ (resp. $\Inj \Lambda$) 
the full subcategory of projective (resp. injective) $\Lambda$-modules. 
We denote  by $\proj  \Lambda$ the full subcategory of finitely generated projective $\Lambda$-modules. 

We set $\Hom_{\Lambda} := \Hom_{\Mod \Lambda}$. 
Note that $\Hom_{\Lambda}$ also denotes the Hom-space of the derived category $\sfD(\Mod \Lambda)$.

We denote the opposite algebra  by $\Lambda^{\op}$. 
We identify left $\Lambda$-modules with (right) $\Lambda^{\op}$-modules.   
A $\Lambda$-$\Lambda$-bimodule $D$ is always assumed to be $\kk$-central, 
i.e., $ad = da $ for $d \in D, \ a \in \kk$. 
For a $\Lambda$-$\Lambda$-bimodule $D$, 
we denote by $D_{\Lambda}$ and ${}_{\Lambda} D$ 
the underlying right  and left $\Lambda$-modules respectively.

\subsubsection{Categories of cochain complexes, the homotopy categories and the derived categories}

For an additive category $\cA$, we denote by $\sfC(\cA)$ and $\sfK(\cA)$  
the category of cochain complexes and cochain morphisms 
and its homotopy category respectively. 
For complexes $X, Y \in \sfC(\cA)$, 
we denote by $\Hom_{\cA}^{\bullet}(X, Y)$ the $\Hom$-complex. 
For an abelian category $\cA$, we denote by $\sfD(\cA)$ the derived category of $\cA$.

We denote the derived functor of $\Hom_{\cA}$ by $\RHom_{\cA}$.

For an algebra $\Lambda$, we set $\Hom_{\Lambda} := \Hom_{\sfD(\Mod \Lambda)}$ and  $\RHom_{\Lambda} := \RHom_{\Mod \Lambda}$.

\subsubsection{Triangulated categories}\label{tri cat} 

A triangulated category $\sfT$ is always assumed to be linear over the base commutative ring $\kk$. 
Let $\mathsf{U} , \mathsf{V} \subset \sfT $ be full triangulated subcategories. 
We denote by $\sfU * \sfV \subset \sfT$ to be the full subcategory 
consisting of those objects $X$ which fit into an exact triangle 
$U \to X \to V \to $ with $U \in \sfU, V \in \sfV$. 
If $\Hom_{\sfT}(U, V) = 0$  for all  $U \in \sfU$ and all $V \in \sfV$, we write $U \perp V$. 

Let $X \in \sfT$ be an object. 
We denote by $\thick X$ the thick closure of $X$, that is, the smallest triangulated subcategory of $\sfT$ containing $X$ that is closed under direct summands. 
In other words, it is a triangulated subcategory of $\sfT$ 
consisting of objects which are constructed from $X$ by taking shifts, cones and direct summands. 
An object $X \in \sfT$ is said to be  a  \emph{tilting object} of $\sfT$ if $\thick X = \sfT$ and $\Hom_{\sfT}(X ,X [n] ) = 0$ for $n \neq 0$.

\section{Graded modules and their derived categories}\label{graded modules}

In this  paper,   a graded algebra  $A = \bigoplus_{i \geq 0} A_{i}$ is always finitely graded,  
that is $A_{i} = 0$ for $i \gg 0$. 
Moreover, we always assume that the maximal degree  
$\ell := \max \{ i \in \NN  \mid A_{i} \neq 0 \}$ of $A$ 
is positive, i.e., $\ell \geq 1$. 

In this Section \ref{graded modules}, we set notations related to graded modules and their derived categories, 
and collect basic facts which is used in the later sections. 

\subsection{Graded algebras and graded modules}
We fix notations for graded modules and recall basic facts. 
 For  details, we refer the readers to \cite{NV:Graded and Filtered}.

We denote by $\GrMod A$ the category of graded (right) $A$-modules\footnote{See Remark \ref{202003100849} for the expression of a graded module $M$} $M= \bigoplus_{i \in \ZZ} M_{i}$ 
and graded $A$-module degree-preserving homomorphisms $f: M \to N$
i.e., $f(M_{i}) \subset N_{i}$ for $i \in \ZZ$.

For a graded $A$-module $M$ and an integer $j \in \ZZ$, 
we define the shift $M(j) \in \GrMod A$ by $(M(j))_{i} = M_{i+j}$.  
The truncation $M_{\geq  j}$  is a graded submodule of $M$ defined by 
\[
(M_{\geq j})_{i} :=
\begin{cases} 
M_{i} &  (i \geq j), \\  0  & ( i < j). 
\end{cases}
\]
We set $M_{ < j} := M/M_{\geq j}$ so that we have an exact sequence $ 0\to M_{\geq j} \to M \to M_{< j} \to 0$.

For $M, N \in \GrMod A, \ n \in \NN$ and $i \in \ZZ$,  
we set $\grExt_{A}^{n}(M,N)_{i}:= \Ext_{\GrMod A}^{n}(M,N(i))$ 
and 
\[
\grExt_{A}^{n}(M, N) := \bigoplus_{i \in \ZZ} \grExt_{A}^{n}(M, N)_{i} = \bigoplus_{i \in \ZZ} \Ext_{\GrMod A}^{n}(M, N(i)). 
\]
We regard $\grExt_{A}(M,N)$ as a graded $\kk$-module with the grading given as in the formula. 

We set $\grHom_{A}(M,N) := \grExt_{A}^{0}(M, N)$. 
We note the obvious equations \[
\grHom_{A}(M,N) = \bigoplus_{i\in \ZZ}\Hom_{\GrMod A}(M, N(i)), \ \ 
\grHom_{A}(M,N)_{0} = \Hom_{\GrMod A}(M,N).\] 
%

We may regard $A$ and $\Lambda$ as  graded $A$-$A$-bimodules. 
Then the canonical projection $p: A \to \Lambda$ is a homomorphism of graded $A$-$A$-bimodules. 
We may  identify  $\grHom_{A}(A,M)$ with $M$ via a canonical map $\grHom_{A}(A,M) \to M, \  f\mapsto f(1)$. 
Moreover 
we may identify $\grHom_{A}(\Lambda, M)$ with the graded submodule $\{ m \in M \mid am = 0 \ (\forall a \in A_{\geq 1}) \}$ of $M$ 
via the induced injective map $\grHom(p, M): \grHom_{A}(\Lambda, M) \hookrightarrow \grHom_{A}(A,M) \cong M$. 
It is shown in \cite[Lemma 2.8]{adasore} that $\grHom_{A}(\Lambda, M)$ is an essential graded submodule of $M$. 
We leave the verification of the following lemma to the readers. 

\begin{lemma}
Assume that $\kk$ is a field and $A$ is finite dimensional over $\kk$. 
Let $M$ be a finite dimensional graded $A$-module. 
Then for an integer $i$, we have $\grHom_{A}(\Lambda, M)_{i} \neq 0$ if and only if $( \soc M)_{i} \neq 0$. 
\end{lemma}

\subsubsection{The subcategory $\Mod^{I} A$ of $\GrMod A$}
For a subset $I \subset \ZZ$, we denote by $\Mod^{I} A \subset \GrMod A$  the full subcategory consisting of graded $A$-modules $M$ such that $M_{i} = 0$ for $i \in \ZZ \setminus I$. 
We note that $\Mod^{I} A$ is an abelian subcategory of $\GrMod A$.

Let $i$ be an integer. 
For notational simplicity, we set $\Mod^{\leq i} A := \Mod^{( - \infty, i]} A$ and $\Mod^{\geq i} A := \Mod^{[i, \infty) } A$. 
We may regard the assignment  $M \mapsto M_{\geq i}$ as a functor $\GrMod A \to \Mod^{\geq i } A$. 
It is a right adjoint functor of the embedding functor $\mathsf{em}_{\geq i}: \Mod^{\geq i}A \hookrightarrow \GrMod A$.  
\begin{equation}\label{202003091912}
\mathsf{em}_{\geq i}: \Mod^{\geq i } A \rightleftarrows \GrMod A: (-)_{\geq i}
\end{equation}
Note that we have $M = (\mathsf{em}_{\geq i}(M))_{\geq i}$. 
The functor $(-)_{\leq i}:  \GrMod A \to \Mod^{\leq i} A , \ M \mapsto M_{\leq i}$ 
is a left adjoint functor of the embedding 
$\mathsf{em}_{\leq i} : \Mod^{\leq i} A \to \GrMod A$. 
\begin{equation}\label{202003091913}
(-)_{\leq i}:  \GrMod  A \rightleftarrows \Mod^{\leq i} A: \mathsf{em}_{\leq i}.
\end{equation}
Note that we have $M = (\mathsf{em}_{\leq i}(M))_{\leq i}$.

\subsubsection{A canonical embedding $\Mod \Lambda \hookrightarrow \GrMod A$.}

For notational simplicity we always set $\Lambda := A_{0}$. 
%
%
We  regard a $\Lambda$-module $N$ as a graded $A$-modules concentrated in degree $0$.  
The category $\Mod \Lambda$ of $\Lambda$-modules is identified with  the full subcategory $\Mod^{0} A= \Mod^{[0,0]} A$.

\begin{remark}\label{202003100849} 
Let $M$ be a graded $A$-module and $i$ an integer.  Then we regard the $i$-degree part $M_{i}$ as an  ungraded $\Lambda$-module. 
We remark that by the above convention, the $i$-degree part $M_{i}$ is regarded as a graded $\Lambda$-module concentrated in degree $0$. 
Therefore, the underlying graded $\Lambda$-module of $M$ is $\bigoplus_{i \in \ZZ} M_{i}(-i)$.

We remark that $M_{\geq i}$ is a subobject of $M$ and $M_{\leq i}$ is a quotient object of $M$ in $\GrMod A$.    
Therefore for example, we have $(M_{\geq i})_{\leq i} \neq  M_{i}$, but $(M_{\geq i})_{\leq i} = M_{i}(-i)$ in $\GrMod A$. 
\end{remark}

\subsubsection{The functor $\grHom_{\Lambda}(A, -): \Mod \Lambda \to \GrMod A$}

We introduce a functor $\grHom_{\Lambda}(A, -): \Mod \Lambda \to \GrMod A$ which plays a key role in the paper.

For this purpose, it is convenient to work with the category $\GrMod \Lambda$ of graded $\Lambda$-modules 
where we regard $\Lambda$ as a graded algebra concentrated in degree $0$. 
Let $\sfU: \GrMod A \to \GrMod \Lambda$ be the functor which sends a graded $A$-module $M$ to its underlying graded $\Lambda$-module $\bigoplus_{i \in \ZZ} M_{i}( -i)$.  
Observe that 
the functor $\sfU$ is obtained as the tensor product $- \otimes_{A} A$ where we regard $A$ as a graded $A$-$\Lambda$-bimodule. 
Therefore we have the following adjoint pair 
\begin{equation}\label{202003091829}
\sfU = -\otimes_{A} A : \GrMod A \rightleftarrows \GrMod \Lambda : \grHom_{\Lambda}(A, -). 
\end{equation}

Since $\Lambda$ is concentrated in degree $0$, we have 
\begin{equation}\label{202003102001}
\grHom_{\Lambda}(M,N)_{i} = \prod_{j \in \ZZ} \Hom_{\Lambda}(M_{j -i}, N_{j})
\end{equation} 
for $M, N \in \GrMod \Lambda$. 
Thus in particular we have 
\begin{equation}\label{202003091839}
\grHom_{\Lambda}(A, N)_{i} = \bigoplus_{i \leq j \leq \ell + i}\Hom_{\Lambda}(A_{j-i}, N_{j}).
\end{equation}

\begin{definition}\label{202003141933} 
We define a functor $\grHom_{\Lambda}(A, -): \Mod \Lambda \to \GrMod A$ to be the following composition 
\[
\grHom_{\Lambda}(A, -): \Mod \Lambda \hookrightarrow \GrMod \Lambda \xrightarrow{ \ \grHom_{\Lambda}(A, -) \ } \GrMod A
\]
where the first arrow is a canonical embedding that regards $\Lambda$-modules as graded $\Lambda$-modules 
concentrated in degree $0$. 
\end{definition}

\begin{remark}
The symbol $\grHom_{\Lambda}(A, -)$ only denotes the functor $\grHom_{\Lambda}(A, -) : \Mod \Lambda \to \GrMod A$ 
and does not denote  the functor $\grHom_{\Lambda}(A, -) : \GrMod \Lambda \to \GrMod A$ in the sequel. 
\end{remark}
It follows from \eqref{202003091839} that, for a $\Lambda$-module $N$, we have $\grHom_{\Lambda}(A, N) \in \Mod^{[- \ell, 0]} A$.

\begin{lemma}\label{202003101955} 
Let $M \in \GrMod A, \ N \in \Mod \Lambda$ and $k \in \ZZ$. 
Then we have  an isomorphism  
\[
\grHom_{A}(M_{\leq k}, \grHom_{\Lambda}(A, N)) \cong \grHom_{A}(M, \grHom_{\Lambda}(A, N))_{\geq- k}.
\]
in $\GrMod \kk$.
\end{lemma}

\begin{proof}
Let $i$ be an integer. We have the following isomorphisms 
\[
\begin{split}
\grHom_{A}(M, \grHom_{\Lambda}(A, N))_{i} \cong  
\grHom_{\Lambda}(\sfU(M), N)_{i} \cong \Hom_{\Lambda}(M_{ -i}, N)  
\end{split}
\]
where 
the first isomorphism is deduced from the adjoint pair \eqref{202003091829} 
and the second isomorphism is a special case of \eqref{202003102001}.
In the same way, we obtain 
\[
\grHom_{A}(M_{\leq k}, \grHom_{\Lambda}(A, N))_{i} 
= 
\begin{cases}
\Hom_{\Lambda}(M_{-i}, N) &  ( i \geq -k) \\
0 & ( i < k).  
\end{cases}
\]
Therefore we conclude the desired isomorphism. 
\end{proof}

\subsection{The derived category of graded modules}

 For complexes $M, N \in \sfC(\GrMod A)$ of graded $A$-modules, 
we denote by $\grHom_{A}^{\bullet}(X, Y)$ the graded $\Hom$-complex. 
Namely, for $i \in \ZZ$ we set $\cpxgrHom_{A}(M, N)_{i} := \cpxHom_{\GrMod A}(M, N(i))$ 
and 
\[
\cpxgrHom_{A}(M,N) := \bigoplus_{i \in \ZZ}\cpxgrHom_{A}(M, N)_{i} = \bigoplus_{ i \in\ZZ}\cpxHom_{\GrMod A}(M, N( i)).  
\]
We regard $\cpxgrHom_{A}(M,N)$ as an object of $\sfC(\GrMod \kk)$ with the grading given as in the formula.

For objects $M, N \in \sfD(\GrMod A), \ n \in \NN$ and $i \in \ZZ$,  
we set $\grRHom_{A}(M,N)_{i}:= \RHom_{\GrMod A}(M,N(i))$ 
and 
\[
\grRHom_{A}(M, N) := \bigoplus_{i \in \ZZ} \grRHom_{A}(M, N)_{i} = \bigoplus_{i \in \ZZ} \RHom_{\GrMod A}(M, N(i)). 
\]
We regard $\grRHom_{A}(M,N)$ as an object of $\sfD(\GrMod \kk)$ with the grading given as in the formula. 
We note that for  $M, N \in \GrMod A$ and $ n \in \NN$,  we have a natural isomorphism 
\[
\tuH^{n}(\grRHom_{A}(M,N)) = \grExt_{A}^{n}(M, N).
\]

\subsubsection{The embedding $\mathsf{em}_{[i,j]}: \sfD(\Mod^{[i,j]} A) \to \sfD(\GrMod A)$}

First note that the functors in the adjoint pairs \eqref{202003091912} and \eqref{202003091913} are exact. 
Therefore, we obtain the following adjoint pairs of derived categories. 
\[
\mathsf{em}_{\geq i}: \sfD( \Mod^{\geq i } A ) \rightleftarrows \sfD(\GrMod A): (-)_{\geq i}, \ \ 
(-)_{\leq i}:  \sfD(\GrMod  A) \rightleftarrows \sfD(\Mod^{\leq i} A): \mathsf{em}_{\leq i}.
\]
Note that we have $M= (\mathsf{em}_{\geq i}(M))_{\geq i}$  for $M \in \sfD(\Mod^{\geq i} A)$. 
Therefore, the functor $\mathsf{em}_{\geq i}$  is fully faithful. 
Similarly, we have  $M = (\mathsf{em}_{\leq i}(M))_{\leq i}$  for $M \in \sfD(\Mod^{\leq i} A)$.
Therefore, the functor  $\mathsf{em}_{\leq i}$ is fully faithful.

Let $i, j$ be integers such that $i \leq j$. 
We denote by  $\mathsf{em}_{[i,j]}: \Mod^{[i,j]}A \to \GrMod A$ the embedding functor. 
We denote the induced functor $\mathsf{em}_{[i,j]} :\sfD(\Mod^{[i,j]} A) \to \sfD(\GrMod A)$ by the same symbol.

\begin{lemma}\label{embedding lemma}
The functor $\mathsf{em}_{[i,j]} :\sfD(\Mod^{[i,j]} A) \to \sfD(\GrMod A)$ is fully faithful. 
\end{lemma}

\begin{proof}
The embedding $\mathsf{em}_{[i,j]}: \Mod^{[i,j]} A \to \GrMod A$ is the composition of the embedding functors 
\[
\mathsf{em}_{[i,j]} : \Mod^{[i,j]} A \xrightarrow{ \mathsf{em}'_{\leq j}} \Mod^{\geq i} A \xrightarrow{ \ \mathsf{em}_{\geq i} \ } \GrMod A. 
\]
Therefore the functor $\mathsf{em}_{[i,j]} :\sfD(\Mod^{[i,j]} A) \to \sfD(\GrMod A)$ is the 
following composition 
\[
\mathsf{em}_{[i,j]} : \sfD(\Mod^{[i,j]} A) \xrightarrow{ \mathsf{em}'_{\leq j}} \sfD(\Mod^{\geq i} A) 
\xrightarrow{ \ \mathsf{em}_{\geq i} \ }  \sfD(\GrMod A). 
\]

We already shown that the induced functor $\mathsf{em}_{\geq i} : \sfD(\Mod^{\geq i} A) \to \sfD(\GrMod A)$ is 
fully faithful. 
We can prove that the induced functor $\mathsf{em}'_{\leq j} : \sfD(\Mod^{[i,j]} A) \to \sfD(\Mod^{\geq i}  A)$ is 
fully faithful  
by a similar argument that proves the functor 
$\mathsf{em}_{\leq j} : \sfD(\Mod^{ \leq  j} A) \to \sfD(\GrMod A)$ is fully faithful. 
Thus we conclude that the functor $\mathsf{em}_{[i,j]} :\sfD(\Mod^{[i,j]} A) \to \sfD(\GrMod A)$ is fully faithful. 
\end{proof}

\begin{remark}\label{202003141726}
Let $i$ be an integer. 
By Remark \ref{202003100849} taking the $i$-th degree part yields an exact functor $(-)_{i} : \GrMod A \to \Mod \Lambda$. 
We denote by the same symbol  $(-)_{i} : \sfD(\GrMod A) \to \sfD(\Mod \Lambda)$  the induced functor. 

We may identify the category $\Mod \Lambda$ with the full subcategory $\Mod^{0} A := \Mod^{[0,0]} A$.
On the other hand, 
by Lemma \ref{embedding lemma}, $\sfD(\Mod^{0} A)$ is regarded as a full subcategory of $\sfD(\GrMod A)$. 
Therefore, we may identify $\sfD(\Mod \Lambda)$ with a full subcategory of $\sfD(\GrMod A)$ consisting of those objects $M$ such that 
$M_{i} = 0$ in $\sfD(\Mod \Lambda)$ for $i \neq 0$. 

We remark that,  for example, we have   $(M_{\geq i})_{\leq i} \neq M_{i}$, but $(M_{\geq i})_{\leq i} = M_{i}(-i)$ in $\sfD(\GrMod A)$. 
\end{remark}

\subsubsection{The functor $\grRHom_{\Lambda}(A, -): \sfD(\Mod \Lambda) \to \sfD(\GrMod A)$}\label{202003151619}

We denote by $\grRHom_{\Lambda}(A, -): \sfD(\Mod \Lambda) \to \sfD(\GrMod A)$ 
the derived functor of the functor $\grHom_{\Lambda}(A, -): \Mod \Lambda \to \GrMod A$ defined in Definition \ref{202003141933}.

We note an isomorphism $\grRHom_{\Lambda}(A, M)_{i} \cong \RHom_{\Lambda}(A_{ -i}, M)$ in $\sfD(\Mod \Lambda)$. 
Therefore $\grRHom_{\Lambda}(A, M)$ belongs to $\sfD(\Mod^{[-\ell, 0]}A)$. 
Since $A$ is finitely graded, the object $\grRHom_{\Lambda}(A, M) \in \sfD(\GrMod A)$ is of bounded cohomology 
if and only if so are $\RHom_{\Lambda}(A_{i}, M) \in \sfD(\Mod \Lambda)$ for $i = 0, 1, \cdots, \ell$.

\subsection{Decomposition of a  complex of graded injective $A$-modules}
We recall from \cite{adasore} a decomposition of  a complex $I$ of  graded injective $A$-modules. 

\subsubsection{Decomposition of a   graded injective $A$-module}

First we deal with a decomposition of a graded injective $A$-module $I$.

We denote by  $\GrInj A$ the full subcategory of  graded injective modules. 
For an integer $i \in \ZZ$, 
we denote  by
$\frki_{i}: \GrInj A \to \Inj \Lambda$ the functor $\frki_{i}I := \grHom_{A}(\Lambda, I)_{i}$ 
and define a graded injective $A$-module  
$\frks_{i}I := \grHom_{\Lambda}(A,  \frki_{i}I)(-i)$.

Roughly speaking $\frki_{i}I$ is a set of cogenerators in degree $i$ and 
$\frks_{i} I $ is a maximal graded submodule of $I$ cogenerated in degree $i$. 
We collect basic properties of these functors from \cite{adasore}. 

\begin{lemma}[{\cite[Lemma 2.7, Corollary 2.10]{adasore}}]\label{202003111530}
For $I \in \GrInj A$,  the following assertions hold. 
\begin{enumerate}[(1)]
\item 
We have a canonical  isomorphism of graded $A$-modules 
\[
 I  \cong \bigoplus_{ i\in \ZZ} \frks_{i} I.
 \]
 
 \item For $M \in \GrMod A$, we have the following isomorphism of $\kk$-modules
 \[
\Hom_{\GrMod A}(M, I ) \cong \prod_{i\in \ZZ} \Hom_{\Lambda}(M_{i} , \frki_{i}I). 
\]
 \end{enumerate}
 \end{lemma}

 We give further properties of the functors $\frki_{i}$ and $\frks_{i}$. 
%
%
We note that for $j \in \ZZ$ we have $(\frks_{i}I)_{j} \cong \Hom_{\Lambda}(A_{i-j}, \frki_{i} I)$. 
In particular we have $\frks_{i} I \in \Mod^{[i -\ell, i]}A$.


\begin{lemma}\label{202003082113}
The following assertions hold. 
\begin{enumerate}[(1)] 

\item 
Let $I$ be a graded injective $A$-module and $i \in \ZZ$.   
 Then for $0 \leq k \leq \ell -1$, we have 
 \[
\grHom_{A}(A_{\leq k} , \frks_{i} I)  \cong  (\frks_{i} I)_{\geq i -k}.
\] 

\item 

Let $I, J\in \grInj A$ and $i, j \in \ZZ$. 
Assume that $ j < i$, then we have 
$\Hom_{\GrMod A}(\frks_{j} J, \frks_{i}I) = 0$. 

\end{enumerate}
\end{lemma}

 \begin{proof}
 (1) follows from Lemma \ref{202003101955}. 
 
 (2) 
 Applying (1) to the case $k =0$,  
we obtain the following equality for $i, j  \in \ZZ$  
\[
\frki_{j} \frks_{i} I = 
\begin{cases} 
\frki_{i} I & ( i = j), \\
0 & ( i \neq j). 
\end{cases}
\]
Combining this equality with Lemma \ref{202003111530} (2), 
we conclude $\Hom_{\GrMod A}(\frks_{j} J, \frks_{i}I) = 0$ as desired. 
\end{proof}

\begin{definition}
For  a subset $I \subset \ZZ$, we define a full subcategory $\Inj^{I\textup{-cog}} A \subset \GrMod A$ to be 
\[
\Inj^{I \textup{-cog}}A := \{ I \in \GrInj A\mid \frki_{j} I = 0 \ ( \forall j \in \ZZ \setminus I)\}.
\]
For an integer $i \in \ZZ$, we set 
$\Inj^{\leq i \textup{-cog}}A:= \Inj^{( -\infty, i] \textup{-cog}}A$ and 
$\Inj^{> i \textup{-cog}}A:= \Inj^{(i, \infty) \textup{-cog}}A$.
\end{definition}

Combining Lemma \ref{202003111530} (1) and Lemma \ref{202003082113} (2), 
we obtain the following corollary.

\begin{corollary}\label{202003111555}
Let $i \in \ZZ$. 
Then for $I \in \Inj^{\leq i\textup{-cog}}A$ and $J \in \Inj^{> i \textup{-cog}}A$, 
we have 
$\Hom_{\GrMod A}( I, J) =0$. 
\end{corollary}

For $I \in \GrInj A$ and $i \in \ZZ$, 
we set $\frks_{\leq i} I := \bigoplus_{j \leq i} \frks_{j}I$  and $\frks_{> i} I := \bigoplus_{j >i} \frks_{j}I$. 
Then we have $I \cong (\frks_{\leq i} I) \oplus (\frks_{> i} I)$ by Lemma \ref{202003111530} (1). 
It follows from Corollary \ref{202003111555} that
a morphism $f:I \to J$ in $\GrInj A$ is of the following form 
\[
f: I = (\frks_{\leq i} I) \oplus (\frks_{> i} I) \xrightarrow{\small \begin{pmatrix} \frks_{\leq i } (f) & * \\ 0 & \frks_{> i}(f) \end{pmatrix} } 
(\frks_{\leq i} J) \oplus (\frks_{> i} J) = J
\]

\subsubsection{Decomposition of a  complex of graded injective $A$-modules}

By abuse of notations, 
we denote the functors  
$\frki_{i}: \sfC(\GrInj A ) \to \sfC(\Inj \Lambda), \ 
\frks_{i}: \sfC(\GrInj A) \to \sfC(\GrInj A)$ 
induced  from the functors  $\frki_{i}: \GrInj A \to \Inj \Lambda, \ \frks_{i}: \GrInj A \to \GrInj A$. 
Namely, for $I \in \sfC(\GrInj A)$ we set $\frki_{i}I := \cpxgrHom_{A}(\Lambda, I)$ 
and $\frks_{i}I := \cpxgrHom_{\Lambda}(A, \frki_{i} I)( -i)$.

Let $I=( \bigoplus_{n\in \ZZ} I^{n},\{\partial^{n}_{I}\}_{n\ \in \ZZ} )$ be an object of $\sfC(\GrInj A)$ and $i \in \ZZ$. 
Then by Lemma \ref{202003111530} (1), the underlying cohomological graded object  of $I$ is  $\bigoplus_{ i \in \ZZ} \frks_{i}I$. 
Namely, the component $I^{n}$ of  the cohomological degree $n$ is $\bigoplus_{j \in \ZZ} \frks_{i}( I^{n})$. 
Note that the differential $\partial_{I}$ does not preserves $\frks_{i} I$. 
So $I$ dose not coincide with $\bigoplus_{ i \in \ZZ} \frks_{i}I$ as complexes.
For $n \in \ZZ$,  
the $n$-th differential $\partial_{I}^{n}: I^{n} \to I^{n +1}$ is of the following form 
\[
\partial_{I}^{n} : I^{n} = (\frks_{\leq i} I^{n}) \oplus (\frks_{> i} I^{n} ) 
\xrightarrow{\small \begin{pmatrix} \frks_{\leq i } (\partial_{I}^{n}) & * \\ 0 & \frks_{> i}(\partial_{I}^{n}) \end{pmatrix} } 
(\frks_{\leq i} I^{n +1} ) \oplus (\frks_{> i} I^{n +1}) = I^{n +1}.
\]
Therefore, 
we obtain a subcomplex  $ \frks_{\leq i}I := ( \bigoplus_{ n \in \ZZ} \frks_{\leq i} (I^{n}), \frks_{\leq i}(\partial_{I}))$ of 
 $I$.  
We set $\frks_{ > i} I := I /\frks_{\leq i} I$. 
We note that the underlying cohomological graded object of $\frks_{>i}I$  is $\bigoplus_{n \in \ZZ} \frks_{> i} (I^{n})$. 
These complexes  fit into  a canonical exact sequence 
\begin{equation}\label{202003082133}
 0 \to \frks_{\leq i} I \to I \to \frks_{> i} I \to 0  
\end{equation} 
in $\sfC(\GrMod A)$. This exact sequence splits if we forget the differentials.

By abuse of notations, 
we denote the functors  
$\frki_{i}: \sfK(\GrInj A) \to \sfK(\Inj \Lambda), \ 
\frks_{i}: \sfK(\GrInj A) \to \sfK(\GrInj A)$. 
The exact sequence \eqref{202003082133}  gives an exact triangle   
\begin{equation}\label{exact triangle}
 \frks_{\leq i} I \to I \to \frks_{> i} I \to 
\end{equation}
in $\sfK(\GrMod A)$ and hence in $\sfD(\GrMod A)$.
 
Let $M$ be an object of $\sfD(\GrMod A)$ and $I \in \sfC(\GrInj A)$ be an injective resolution of $M$, 
that is, $I$ is a DG-injective complex equipped with a quasi-isomorphism  $M \xrightarrow{\sim} I$.  
Then, $\frki_{i} I $ can be computed as 
 \begin{equation}\label{frki description}
 \grRHom_{A}(\Lambda, M)_{i}  \cong \frki_{i} I\textup{ in } \sfD(\Mod \Lambda). 
\end{equation}
Since  $\frki_{i}I$ is a DG-injective complex  of $\Lambda$-modules by injective version of  \cite[Lemma 4.3]{adasore},  
the object $\RHom_{\Lambda}(A, \frki_{i}I) \in \sfD(\GrMod A)$ is represented by the complex 
$\cpxgrHom_{\Lambda}(A, \frki_{i}I) \in \sfC(\GrMod A)$. 
Therefore we have 
\begin{equation}\label{frks description}
\grRHom_{\Lambda}(A, \frki_{i} I)( -i) \cong \frks_{i}I
\end{equation} 
in $ \sfD(\GrMod A)$ (for the convention of gradings see Remark \ref{202003141726}).

\subsubsection{The morphism $\phi_{i}: M \to \grRHom_{\Lambda}(A, M_{i})( -i)$}\label{the morphism phi}

In this Section \ref{the morphism phi} we introduce a morphism $\phi_{i}: M \to \grRHom_{\Lambda}(A, M_{i})( -i)$ 
which is  a key role in the sequel. 

Let $M \in \sfC(\GrMod A)$. 
We define a morphism $\tilde{\phi}_{i}: M \to \cpxgrHom_{\Lambda}(A, M_{i})(-i)$ in $\sfC(\GrMod A)$  in the following way. 
We note that we regard $M_{i}$ as an object in $\sfC(\GrMod \Lambda)$ concentrated in $0$-th degree.  
Let $m \in M_{j}^{n}$ be a homogeneous  element of degree $j$ of $n$-th cohomological degree. 
Then, we define $\tilde{\phi}_{i}(m): A_{i-j} \to M_{i}^{n}$ to be $\tilde{\phi}_{i}(m)(a) := ma$. It is easy to check that 
the morphism $\tilde{\phi}_{i}$ commutes with the differentials of $M$ and $\cpxgrHom_{\Lambda}(A, M_{i})( -i)$. 
We define a morphism $\phi_{i}: M \to \grRHom_{\Lambda}(A, M_{i})( -i)$ in $\sfD(\GrMod A)$ 
to be the composition $\phi_{i}: =\mathsf{can} \circ \tilde{\phi}_{i}$ 
where $\mathsf{can}: \grHom_{\Lambda}(A, M_{i})(-i) \to \grRHom_{\Lambda}(A, M_{i})(-i)$ is a canonical morphism.

\begin{lemma}\label{20170819245}
Let $M$ be an object of $\sfD(\GrMod A)$ and $I \in \sfC(\grInj A)$ an injective resolution of $M$.  
Assume that $M_{>i } = 0$ in $\sfD(\GrMod A)$ for some integer $i \in \ZZ$. 
Then the following assertions hold. 
\begin{enumerate}[(1)]
\item $\frki_{j} I = 0$ in $\sfD(\GrMod A)$ for $j > i$. 

\item $\frks_{\geq  i} I \cong \frks_{i} I$ and $\frks_{j} I = 0$ in $\sfD(\GrMod A)$ for $j > i$. 

\item $\frki_{i} I$ is an injective resolution of $M_{i} \in \sfD(\Mod \Lambda)$. 

\item 
The following diagram is commutative. 
\[
\begin{xymatrix}{
M \ar[rr]^{\phi_{i}\ \ \ \ \ \ } \ar[d]_{\cong}& & \grRHom_{\Lambda}(A, M_{i})(-i) \ar[d]^{\cong}\\
I \ar[rr]_{\mathsf{can}} & & \frks_{ \geq i } I  \cong \frks_{i} I 
}\end{xymatrix}\]
\end{enumerate}

\end{lemma}

\begin{proof}
(1) (2) and  (3)  are proved  as in \cite[Lemma 5.6]{adasore}. 
(4) follows from (3). 
\end{proof}

\begin{corollary}\label{20182242126}
Let $M$ be an object of $\sfD(\GrMod A)$. 
Assume that $M_{>i } = 0$ for some integer $i \in \ZZ$. 
Then for $N\in \sfD(\Mod \Lambda)$, we have 
\[
\RHom_{\GrMod A}(N(-i), M) \cong \RHom_{\Lambda}(N, M_{i})
\]
\end{corollary}

\begin{proof}
Let $I \in \sfC(\GrInj A)$ be an injective resolution of $M$. 
Then we have the following isomorphism in $\sfD(\Mod \kk)$ 
\[
\RHom_{\GrMod A}(N(-i), M) 
\cong \cpxHom_{\GrMod A}(N(-i), I) 
\cong \cpxHom_{\Lambda}(N, \frki_{i} I) 
\cong \RHom_{\Lambda}(N, M_{i})
\]
where the second isomorphism is deduced from Lemma \ref{202003111530} (2) 
and the third isomorphism is deduced from Lemma \ref{20170819245} (3). 
\end{proof}

\section{The Happel functor}\label{the Happel functor}

In this Section \ref{the Happel functor} we recall the Happel functor and related results. 
%
%

\subsection{The Happel functor for a general finitely graded algebra}

Recall that a graded algebra $A = \bigoplus_{i \geq 0} A_{i}$ is called 
 a \emph{graded Noetherian} algebra if  every left or right graded ideal is finitely generated. 
We call a graded algebra $A = \bigoplus_{i=0} A_{i}$  \emph{finitely graded Noetherian} 
if it is graded Noetherian and concentrated in finitely many grading, i.e., $A_{i} = 0$ for $i \gg 0$. 
Recall that we always assume that the maximal degree  
$\ell := \max \{ i \in \NN  \mid A_{i} \neq 0 \}$ of $A$ 
is positive, i.e., $\ell \geq 1$.

In this Section \ref{the Happel functor},   $A = \bigoplus_{i = 0}^{\ell} A_{i}$ denotes a finitely graded Noetherian algebra. 
The subcategory $\grmod A$  of finitely generated graded $A$-module is an abelian subcategory of $\GrMod A$. 
We set $\mod^{[0, \ell -1]} A := \Mod^{[0,\ell -1]} A \cap \grmod A$. 
In other words, $\mod^{[0, \ell -1]} A$ denotes the full subcategory of $\grmod A$ consisting of $M$ 
such that $M_{i} = 0$ for $i \notin [0, \ell -1]$. 

Recall that the singular derived category $\grSing A$ is defined as the Verdier quotient 
$\grSing A:= \sfD^{\mrb}(\grmod A)/ \sfK^{\mrb}(\grproj A)$. 
We denote by $\pi :\sfD^{\mrb}(\grmod A ) \to \grSing A$ the canonical quotient functor. 
Then the Happel functor  is defined in the following way. 

\begin{definition}\label{definition the Happel functor}
We define the Happel functor  $\varpi$ to be the composition of the canonical functors below. 
\[\varpi:  \sfD^{\mrb}(\mod^{[0, \ell-1]} A) \xrightarrow{\ \mathsf{em}_{[0,\ell-1]} \  }\sfD^{\mrb}(\grmod A) \xrightarrow{ \ \pi \ } \grSing A.\]
\end{definition}
We note that the first functor $\mathsf{em}_{[0, \ell -1]}$ is fully faithful by Lemma \ref{embedding lemma}. 

\subsection{The Happel functor for a finitely graded IG-algebra}


We collect definitions  and basic results in the representation theory of  Iwanaga-Gorenstein (IG) algebras. 

Recall that a graded  algebra  $A$ is called Iwanaga-Gorenstein (IG)
 if it is graded  Noetherian  and has finite graded  self-injective dimension on both sides, 
 i.e., $\grinjdim_{A} A < \infty$ and $\grinjdim_{A^{\op}} A < \infty$. 
We remark that a graded algebra $A$ is graded IG if and only if it is IG as an ungraded algebra (see \cite{adasore}).

Let $A$ be a graded IG-algebra. A finitely generated graded $A$-module $M$ is called \textit{graded Cohen-Macaulay} (CM)  
if $\grExt_{A}^{> 0}(M,A) = 0$.  
The graded CM-modules form a full subcategory $\grCM A$ of $\grmod A$ which is a Frobenius category with the induced exact structure. 
The admissible projective-injective objects of $\grCM A$ are finitely generated graded projective $A$-modules. 
Let $\beta'$ be the following composition
\[
\beta': \grCM A \hookrightarrow \grmod A \hookrightarrow \sfD^{\mrb}(\grmod A) \xrightarrow{\pi} \grSing A.
\]
Buchweitz \cite{Buchweitz} and Happel \cite{Happel} proved that 
the functor $\beta'$ descent to  a triangulated equivalence $\beta$ between 
 the stable category $\stabgrCM A = \grCM A/ \grproj A $  to the singular derived category $\grSing A$
 \[
 \beta: \stabgrCM A \xrightarrow{ \cong } \grSing A. 
 \]

As a consequence we obtain a functor mentioned in \eqref{Intoroduction the Happel functor 3}. 
\begin{definition}
We set $\cH : = \beta^{-1} \varpi$ and call it also the Happel functor. 
\[
\cH := \beta^{-1} \varpi: \sfD^{\mrb}(\mod^{[0, \ell -1]} A) \to \stabgrCM A. 
\] 
\end{definition}

Let $\Lambda$ be a finite dimensional algebra. 
Then, the graded  algebra  
$\textup{T}(\Lambda) = \Lambda \oplus \tuD(\Lambda), \ \deg \Lambda = 0, \deg \tuD(\Lambda) =1$  
is graded self-injective and in particular graded IG. 
We have $\grCM \textup{T}(\Lambda)  = \grmod \textup{T}(\Lambda)$, $\mod^{0} A = \mod\Lambda$ and the functor $\cH$ constructed above coincides with  the original Happel functor.

\subsubsection{Iwanaga's Lemma} 

We recall a well-known fact which was first observed by Iwanaga \cite{Iwanaga}. 
Let $A$ be a graded IG-algebra. Then, for a finitely generated graded $A$-module $M$ 
we have $\grpd M < \infty \ \Leftrightarrow \ \grinjdim M < \infty$. 
We give a derived categorical interpretation.

\begin{lemma}\label{derived interpretation of Iwanaga lemma}
Let $A$ be an IG-algebra. 
Then we have 
$\sfK^{\mrb}(\grproj A) = \sfD^{\mrb}(\grmod A) \cap \sfK^{\mrb}(\GrInj A)$.
\end{lemma}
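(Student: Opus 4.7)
The plan is to establish the two inclusions separately inside $\sfD(\GrMod A)$, where both $\sfK^{\mrb}(\grproj A)$ and $\sfK^{\mrb}(\GrInj A)$ sit fully faithfully (the latter because a bounded complex of graded injectives is homotopy injective). The forward inclusion is immediate from the IG hypothesis: each term $P^{i}$ of $P^{\bullet} \in \sfK^{\mrb}(\grproj A)$ is a direct summand of some $\bigoplus_{j} A(-n_{j})$, and since the right $A$-module $A$ has finite graded injective dimension, a Cartan--Eilenberg injective resolution of $P^{\bullet}$ followed by totalization exhibits $P^{\bullet}$ as a bounded complex of graded injectives.

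For the reverse inclusion, take $X \in \sfD^{\mrb}(\grmod A) \cap \sfK^{\mrb}(\GrInj A)$. The second condition provides a bounded complex $I^{\bullet}$ of graded injectives with $X \simeq I^{\bullet}$, and since $I^{\bullet}$ is homotopy injective there exists $d$ with $\Ext^{i}_{A}(N, X) = 0$ for all $N \in \GrMod A$ and all $i > d$. Since $A$ is graded Noetherian, I may choose a projective resolution $P^{\bullet} \to X$ with each $P^{i}$ finitely generated graded projective and $P^{i} = 0$ above the top cohomological degree of $X$. For $n$ larger than the cohomological spread of $X$, the smart truncation $F := \tau^{\geq -n} P^{\bullet}$ is quasi-isomorphic to $X$; all its terms are finitely generated graded projective except possibly the one in degree $-n$, which is $C^{-n} := \coker(d^{-n-1} \colon P^{-n-1} \to P^{-n})$.

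The crux is forcing $C^{-n}$ to be projective for $n$ large. The short exact sequence of complexes
\[
0 \to \sigma^{>-n} F \to F \to C^{-n}[n] \to 0,
\]
in which $\sigma^{>-n} F$ is a bounded complex of finitely generated graded projectives supported in cohomological degrees $[-n+1, 0]$, yields after $\RHom_{A}(-, N)$ a long exact sequence. The vanishings $\Ext^{n}_{A}(\sigma^{>-n} F, N) = 0 = \Ext^{n+1}_{A}(\sigma^{>-n} F, N)$ collapse the relevant segment to an isomorphism
\[
\Ext^{1}_{A}(C^{-n}, N) \cong \Ext^{n+1}_{A}(X, N),
\]
and for $n \geq d$ the right-hand side vanishes for every $N \in \GrMod A$. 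Hence $C^{-n}$ is projective in $\GrMod A$; being finitely generated it lies in $\grproj A$, and therefore $X \simeq F \in \sfK^{\mrb}(\grproj A)$.

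The main subtlety I anticipate is the careful index bookkeeping in the truncation argument. Conceptually, the decisive point is that $\Ext^{1}_{A}(C^{-n}, -)$ must be tested against \emph{all} objects of $\GrMod A$ to conclude projectivity, and this uniform vanishing is precisely what the hypothesis $X \in \sfK^{\mrb}(\GrInj A)$ inside $\sfD(\GrMod A)$ supplies, over and above mere finite injective dimension inside $\sfD^{\mrb}(\grmod A)$.
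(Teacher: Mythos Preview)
Your proof is correct. Both inclusions are argued soundly, and the Ext bookkeeping in your truncation argument checks out: with $\sigma^{>-n}F$ a bounded complex of projectives sitting in degrees $[-n+1,\text{top}]$, the Hom complex $\Hom^{\bullet}(\sigma^{>-n}F,N)$ vanishes in degrees $\geq n$, so the long exact sequence collapses to the claimed isomorphism $\Ext^{1}_{A}(C^{-n},N)\cong \Ext^{n+1}_{A}(X,N)$.

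Your route differs from the paper's. The paper also extracts a syzygy $N$ from a projective resolution, but only goes just below the cohomology of $M$; it then observes from the triangle that $N$ has finite graded injective dimension and \emph{invokes Iwanaga's lemma} (the statement recalled immediately before this lemma, $\grpd M<\infty \Leftrightarrow \grinjdim M<\infty$) to conclude $\grpd N<\infty$. You instead go deep enough that the syzygy is forced to be \emph{projective}, using directly that $X\in \sfK^{\mrb}(\GrInj A)$ gives uniform high-Ext vanishing against all of $\GrMod A$. In effect you are reproving the relevant direction of Iwanaga's lemma inside the argument. What your approach buys is self-containment and slightly weaker hypotheses (only right Noetherianity and $\grinjdim A_{A}<\infty$ are used); what the paper's approach buys is an explicit link to Iwanaga's classical result, which is precisely the point of that subsection.
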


\begin{proof}
For simplicity, we set $ \sfI := \sfD^{\mrb}(\grmod A) \cap \sfK^{\mrb}(\GrInj A)$. 
It follows from $\grinjdim A < \infty$  that 
$\sfK^{\mrb}(\grproj A) \subset \sfI$. 

Let $M$ be an object of $\sfI$ 
and $ n \in \ZZ$ be an integer such that $\tuH^{< n}(M)= 0$. 
We take a projective resolution $P\in \sfC^{-,\mrb}(\grproj A)$ of $M$ 
and its brutal truncations $\sigma^{> n} P, \ \sigma^{\leq n} P$. 
\[
\begin{split}
\sigma^{ > n}P &:   \cdots \to  0 \xrightarrow{\ \ \ \ \ \ \ \ \ \ \  } 0 \xrightarrow{ \ \ \ \ } 
 P^{n+1} \xrightarrow{\partial_{P}^{n +1}}  P^{n +2} \to \\
\sigma^{\leq n}P& :  \cdots \to P^{n-1} \xrightarrow{\partial_{P}^{ n-1}} P^{n} \xrightarrow{ \ \ \ \ } 
 0 \xrightarrow{ \ \ \ \ \ \ \ \ \ }  0 \to \cdots 
\end{split}
\]
If we set $N := \Coker \partial_{P}^{n-1}$, then $\sigma^{\leq n}P \cong N[-n]$ in $\sfD(\grmod A)$ 
and we obtain an exact triangle $ \sigma^{> n} P \to M \to N[-n] \to $ in $\sfD^{\mrb}(\grmod A)$. 
Observe that  $\sigma^{> n} P$ belongs to $\sfK^{\mrb}(\grproj A)$. 
Hence $M$ and $\sigma^{ > n} P$ belong to $\sfI$. 
Therefore $N$ belongs to $\sfI$ and hence $\grinjdim N < \infty$.   Consequently, we have  $\grpd N < \infty$.  
 It follows from $\sigma^{ > n} P, N \in \sfK^{\mrb}(\grproj A)$ 
 that $M \in \sfK^{\mrb}(\grproj A)$. 
\end{proof}

\subsection{Quasi-Veronese algebra construction}\label{Quasi-Veronese algebra construction}

The reader can postpone Section \ref{Quasi-Veronese algebra construction} until the proof of Theorem \ref{main theorem 2}.

Let $A= \bigoplus_{i= 0}^{\ell} A_{i}$ be a finitely graded Noetherian algebra. 
We recall the quasi-Veronese algebra construction and the relationship with the Beilinson algebra $\nabla A$ defined in \eqref{Beilinson algebra} of a graded algebra from \cite{Mori B-construction} 
(see also \cite{adasore}). 

%
%

We may regard the Happel functor $\varpi$ to be a functor $\sfD^{\mrb}(\mod \nabla A) \to \grSing A$ 
via the equivalence $\sfq : \sfD^{\mrb}(\mod^{[0, \ell -1]} A) \cong \sfD^{\mrb}( \mod  \nabla A)$ of \eqref{Introduction nabla}. 
\[
\varpi : \sfD^{\mrb}(\mod \nabla A) \xrightarrow{ \sfq^{-1} \ \cong \ } \sfD^{\mrb}(\mod^{[0, \ell-1]}A) \to \grSing A.  
\]

We define 
a bimodule $\Delta A$ over $\nabla A$ to be 
\[
\Delta A: = 
\begin{pmatrix} 
A_{\ell} & 0 & \cdots & 0 \\
A_{\ell- 1} & A_{\ell} & \cdots  &0 \\
\vdots & \vdots     &        & \vdots \\
A_{1} & A_{2}   & \cdots  & A_{\ell}
\end{pmatrix} 
\]
where the bimodule structure are 
given by matrix  multiplications. 
Then, 
the trivial extension algebra 
 $\nabla A \oplus \Delta A$ 
with the grading $\deg \nabla A = 0, \deg  \Delta A = 1$ 
is  nothing but the $\ell$-th quasi-Veronese algebra $A^{[\ell]}$ of $A$ introduced in \cite{Mori B-construction}.  
An important fact shown in \cite{Mori B-construction} is that there exists a $\kk$-linear equivalence 
$\mathsf{qv} :\GrMod A \xrightarrow{\cong} \GrMod A^{[\ell]}$ 
such that $(1) \mathsf{qv} = \mathsf{qv} (\ell)$. 
It follows that  $A$ is graded Noetherian (resp. IG)  if and only if so is $A^{[\ell]}$. 
Moreover the equivalence $\mathsf{qv}$ induces  
 equivalences of $\kk$-linear categories 
$\mathsf{qv} : \grmod A \xrightarrow{\cong} \grmod A^{[\ell]}$   
and 
$\mod^{[0, \ell -1]} A \cong \mod^{0} A^{[\ell]} \cong \mod \nabla A$. 
The equivalence $\mathsf{qv}$ induces an equivalence between the singular derived categories as well as 
 the following commutative diagram.  
\begin{equation}\label{qv diagram}
\begin{xymatrix}@R = 10pt{ 
&&\sfD^{\mrb}(\mod^{[0, \ell -1]} A) \ar[rr]^{\varpi_{A}} \ar@{-}[dll]_{\cong}^{\sfq_{A}} && \grSing A \ar[dd]_{\cong}^{\mathsf{qv}} \\
\sfD^{\mrb}(\mod\nabla A)  && && \\
&& \sfD^{\mrb}(\mod^{0} A^{[\ell]}) \ar[rr]_{\varpi_{A^{[\ell]}} }  \ar@{-}[ull]^{\cong}_{\sfq_{A^{[\ell]}}}&& \grSing A^{[\ell]} 
}\end{xymatrix}
\end{equation}

Thanks to results above, we may reduce representation theoretic problems of a finitely graded algebras $A = \bigoplus_{i = 0}^{\ell}A_{i}$ to the case where maximal degree $\ell =1$. 
A finitely graded algebra $A = A_{0} \oplus A_{1}$ of $\ell =1$ 
can be regarded as the trivial extension algebra $A = \Lambda \oplus C$ of $\Lambda := A_{0}$ by $C := A_{1}$ with the grading $\deg \Lambda = 0, \deg C = 1$. 
We point out that in this case, we have $T$ of \eqref{canonical construction} is $\Lambda$. 

\section{Homologically well-graded algebras}\label{Homologically well-graded algebras} 

In Section \ref{Homologically well-graded algebras}, we introduce homologically well-graded algebras. 
We give their characterization and show that the Happel functors of them are fully faithful.

Let $A= \bigoplus_{i= 0}^{\ell} A_{i}$ be a finitely graded algebra with the maximal degree $\ell = \max\{ i \mid A_{i} \neq 0\}$. 
For simplicity we set $\Lambda:= A_{0}$.

\subsection{Homologically well-graded complexes}

Now we  introduce a notion which plays a central role in this paper. 

\begin{definition}
Let $i\in \ZZ$ be  an integer.  
An object $M \in \sfD(\GrMod A)$ is called $i$\textit{-homologically well-graded} ($i$-hwg)
if $\grRHom_{A}(\Lambda, M)_{j} = 0$ in $\sfD(\Mod \Lambda)$  for $j \neq i$. 

An object $M \in \sfD(\GrMod A)$ is called \textit{homologically well-graded}(hwg)
if it is $i$-homologically  well-graded for some $i\in \ZZ$. 
\end{definition}

We collect equivalent conditions for homologically well-gradedness.

\begin{lemma}\label{201708191905}
Let   $M$ be an object of $\sfD(\GrMod A)$ and $I \in \sfC(\GrInj A)$ an injective resolution of $M$. 
Then, 
for an integer $i \in \ZZ$, the following conditions are equivalent. 

\begin{enumerate}[(1)]

\item $M$ is $i$-homologically well-graded. 
 
 \item  $\frki_{j}I = 0$ in $\sfD(\Mod \Lambda)$ for $j \neq i$. 
 
 \item $\frks_{i} I  \cong I$ in $\sfD(\GrMod A)$. 
 
 \item The morphism $\phi_{i}: M \to \grRHom_{\Lambda}(A, M_{i})(-i)$ defined in Section \ref{the morphism phi} is an isomorphism in $\sfD(\GrMod A)$.
 
 \item 
 $\sfD( \Mod^{\ZZ \setminus {\{ i\} }} A) \perp M $.


 \end{enumerate}
\end{lemma}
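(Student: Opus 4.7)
The plan is to establish the implications (1) $\Leftrightarrow$ (2) $\Leftrightarrow$ (3) $\Leftrightarrow$ (4) and (1) $\Leftrightarrow$ (5), leveraging the injective decomposition $I = \bigoplus_j \frks_j I$ in $\sfC(\GrInj A)$ together with the description formulas \eqref{frki description} and \eqref{frks description}.

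First, (1) $\Leftrightarrow$ (2) is immediate from \eqref{frki description}, which identifies $\grRHom_A(\Lambda, M)_j$ with $\frki_j I$ in $\sfD(\Mod \Lambda)$. For (2) $\Leftrightarrow$ (3): by \eqref{frks description}, $\frks_j I \cong \grRHom_\Lambda(A, \frki_j I)(-j)$, so (2) forces $\frks_j I = 0$ in $\sfD(\GrMod A)$ for $j \neq i$, and the decomposition $I = \bigoplus_j \frks_j I$ yields $I \cong \frks_i I$. Conversely, applying $\frki_j = \grHom_A(\Lambda, -)_j$ to the chain-level identity $\frks_i I = \grHom_\Lambda(A, \frki_i I)(-i)$ and using the natural adjunction $\grHom_A(\Lambda, \grHom_\Lambda(A, X)(-i)) \cong X(-i)$ (with $X$ viewed in degree $0$) shows that $\frki_j \frks_i I$ vanishes whenever $j \neq i$; hence under (3), $\frki_j I \cong \frki_j \frks_i I = 0$ for $j \neq i$.

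For (3) $\Leftrightarrow$ (4): the exact degree-$i$ component functor $(-)_i : \sfD(\GrMod A) \to \sfD(\Mod \Lambda)$ applied to $I \cong \frks_i I$ yields $M_i \cong (\frks_i I)_i = \frki_i I$ in $\sfD(\Mod \Lambda)$, and combining with \eqref{frks description} gives $M \cong \frks_i I \cong \grRHom_\Lambda(A, \frki_i I)(-i) \cong \grRHom_\Lambda(A, M_i)(-i)$. Naturality of each step identifies this composite with $\phi_i$, proving (4). For the converse, apply $\grRHom_A(\Lambda, -)_j$ to $\phi_i$: the derived $(A,\Lambda)$-adjunction gives $\grRHom_A(\Lambda, \grRHom_\Lambda(A, M_i)(-i)) \cong M_i(-i)$, whose degree-$j$ component vanishes for $j \neq i$, yielding (1). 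Finally, (1) $\Leftrightarrow$ (5): the direction (5) $\Rightarrow$ (1) follows by plugging in $N = \Lambda(-j) \in \Mod^{\ZZ \setminus \{i\}} A$ for $j \neq i$ and observing $\RHom_{\sfD(\GrMod A)}(\Lambda(-j), M) \cong \grRHom_A(\Lambda, M)_j$. For (1) $\Rightarrow$ (5), replace $M$ by its injective resolution $I \cong \frks_i I$ (from the already-established chain) and invoke \eqref{20182242123} to express $\Hom_{\sfK(\GrMod A)}(N, \frks_i I)$ purely in terms of the degree-$i$ component $N_i = 0$.

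The main obstacle I anticipate is verifying in (3) $\Rightarrow$ (4) that the composite isomorphism obtained from the decomposition coincides with the explicitly defined $\phi_i$; this requires a careful chain-level comparison via the canonical map $\grHom \to \grRHom$ and the counit of the $(A,\Lambda)$-adjunction. The extension of (1) $\Rightarrow$ (5) from bounded modules to unbounded complexes in $\sfD(\Mod^{\ZZ \setminus \{i\}} A)$ is a secondary subtlety, resolved uniformly by working with DG-injective resolutions throughout, so that $\Hom$ computations in the derived category reduce to componentwise $\Hom$ computations that are controlled entirely by the vanishing of $N_i$.
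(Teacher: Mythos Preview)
Your approach is correct and essentially the same as the paper's: both rely on the decomposition $I=\bigoplus_j \frks_j I$, the formulas \eqref{frki description}--\eqref{frks description}, and the induction/restriction adjunction between $\Lambda$- and $A$-modules. The paper organizes the implications as the cycle $(1)\Leftrightarrow(2)\Rightarrow(3)\Rightarrow(4)\Rightarrow(5)\Rightarrow(2)$, whereas you prove a few extra direct implications such as $(3)\Rightarrow(2)$ and $(4)\Rightarrow(1)$; the content is the same.

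Two small points of comparison. First, for $(3)\Rightarrow(4)$ the paper does not argue ``naturality'' directly but instead invokes Lemma \ref{20170819245}(4), which supplies precisely the commutative square identifying the canonical projection $I\to\frks_{\geq i}I\cong\frks_i I$ with $\phi_i$; this is the clean way to handle the obstacle you flag. Second, for $(1)\Rightarrow(5)$ the paper proceeds via $(4)\Rightarrow(5)$ using the derived adjunction $\RHom_{\sfD(\GrMod A)}(N,\grRHom_\Lambda(A,M_i)(-i))\cong\RHom_{\sfD(\GrMod\Lambda)}(N(i),M_i)$ directly, rather than invoking \eqref{20182242123}, which as stated is only for modules; your argument is the same in spirit but the derived adjunction is the correct reference at that step.
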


\begin{proof}
(1) $\Leftrightarrow$ (2) follows from the isomorphism \eqref{frki description}. 

(2) $\Rightarrow$ (3) follows from \eqref{exact triangle} and  \eqref{frks description}. 

(3) $\Rightarrow$ (4). 
Since $M \cong I \cong \frks_{i} I$ in $\sfD(\GrMod A)$, we have $M_{> i} =0$ in $\sfD(\GrMod A)$.  
It  follows from Lemma \ref{20170819245}  
that the morphism $\phi_{i}$ is an isomorphism. 

(4) $\Rightarrow$ (5) follows from the isomorphism below for $N \in \sfD(\GrMod A)$. 
\begin{equation*}\label{20182242116}
\RHom_{\GrMod A}(N, \grRHom_{\Lambda}(A, M_{i})(-i)) \cong \RHom_{\GrMod \Lambda}(N(i), M_{i})
\end{equation*}


(5) $\Rightarrow$ (2) follows from the isomorphism 
$\RHom_{\GrMod A} (\Lambda(-j), M) 
\cong \grRHom_{A}(\Lambda, M)_{j}$. 
\end{proof}

Combining Lemma \ref{20170819245}.(3) and Lemma \ref{201708191905}, 
we see that the injective dimension of an $i$-hwg object $M \in \sfD(\GrMod A)$ 
coincides with that of $M_{i} \in \sfD(\Mod \Lambda)$. 
For the injective dimension of an object of derived category, 
we refer \cite{Avramov-Foxby}. 
We note that in the case where  $M \in \GrMod A$, 
the injective dimension of $M$ as an object of $\sfD(\GrMod A)$ 
coincides  with the usual injective  dimension as a graded $A$-module.

\begin{corollary}\label{201810022155} 
Let $M \in \sfD(\GrMod A)$ be an $i$-hwg object for some $i \in  \ZZ$. 
Then, we have 
\[
\grinjdim_{A}M = \injdim_{\Lambda} M_{i}. 
\]
\end{corollary}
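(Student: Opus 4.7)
The plan is to transfer injective resolutions across the isomorphism $M \cong \grRHom_{\Lambda}(A, M_i)(-i)$ supplied by Lemma \ref{201708191905}(4). The key observation is that the functor $\grHom_{\Lambda}(A, -)$ sends injective $\Lambda$-modules to graded injective $A$-modules; this is immediate from the standard adjunction $\grHom_A(-, \grHom_{\Lambda}(A, E)) \cong \grHom_{\Lambda}(-, E)$, which exhibits $\grHom_A(-, \grHom_{\Lambda}(A, E))$ as an exact functor whenever $E$ is an injective $\Lambda$-module. Conversely, the exact functor $(-)_i : \GrMod A \to \Mod \Lambda$ returns an injective $\Lambda$-module on each graded injective summand of the form $\frks_j I^k = \grHom_{\Lambda}(A, \frki_j I^k)(-j)$, since a direct computation gives $\grHom_A(\Lambda, \frks_j I^k)_i \cong \frki_j I^k$ when $i = j$ and zero otherwise.

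For the inequality $\grinjdim_A M \leq \injdim_{\Lambda} M_i$, I would assume the right-hand side is $d < \infty$ (the case $d = \infty$ being vacuous) and pick an injective resolution $M_i \xrightarrow{\sim} J$ over $\Lambda$ with $J^k = 0$ for $k > d$. Then $\grHom_{\Lambda}(A, J)(-i)$ is a complex of graded injective $A$-modules concentrated in the same cohomological range, and by Lemma \ref{201708191905}(4) it represents $M$ in $\sfD(\GrMod A)$. This exhibits an injective resolution of $M$ of length at most $d$, yielding $\grinjdim_A M \leq d$.

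For the reverse inequality, I would assume $\grinjdim_A M = e < \infty$ and choose a DG-injective resolution $I$ of $M$ with $I^k = 0$ for $k > e$. Applying the exact functor $(-)_i$ to the quasi-isomorphism $M \xrightarrow{\sim} I$ produces a quasi-isomorphism $M_i \xrightarrow{\sim} I_i$. Combining Lemma \ref{201708191905}(3) with the identity $\frks_i I = \grHom_{\Lambda}(A, \frki_i I)(-i)$ and the fact that $A_0 = \Lambda$, one obtains $I_i \cong \frki_i I$ in $\sfD(\Mod \Lambda)$; by the first paragraph this is a complex of injective $\Lambda$-modules with $(\frki_i I)^k = 0$ for $k > e$. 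Hence $\injdim_{\Lambda} M_i \leq e$.

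I do not anticipate any real obstacle: the argument is essentially an unwinding of Lemma \ref{201708191905}(3)-(4) combined with the routine fact that $\grHom_{\Lambda}(A, -)$ preserves injectivity. The only mild care needed is to handle the infinite injective dimension cases uniformly, which is straightforward since the relevant inequality is vacuous whenever the respective side is infinite.
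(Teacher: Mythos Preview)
Your proof is correct and follows essentially the same route as the paper's one-line argument (``Combining Lemma \ref{20170819245}(3) and Lemma \ref{201708191905}''): transfer injective resolutions through the isomorphism $M \cong \grRHom_\Lambda(A, M_i)(-i)$, using that $\grHom_\Lambda(A,-)$ sends injective $\Lambda$-modules to graded injective $A$-modules, and that $\frki_i$ sends a graded injective resolution of $M$ to an injective resolution of $M_i$.

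One small wrinkle in your first paragraph: the sentence about $(-)_i$ returning injectives on each summand $\frks_j I^k$ is not quite right for $j \neq i$, since $(\frks_j I^k)_i = \Hom_\Lambda(A_{j-i}, \frki_j I^k)$ need not be injective. Your justification there actually computes $\frki_i(\frks_j I^k)$ rather than $(\frks_j I^k)_i$. This does not affect your argument, however, because in paragraph three you correctly pass from $I_i$ to $\frki_i I$ via the derived isomorphism $\frks_i I \cong I$ of Lemma \ref{201708191905}(3), and it is $\frki_i I$ (not $I_i$) that you use as the bounded complex of injective $\Lambda$-modules.
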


\subsection{Homologically well-graded algebras}\label{subsubsection: hwg algebra}

We call a finitely graded algebra $A = \bigoplus_{i = 0}^{\ell} A_{i}$
\textit{right  strictly well-graded (right swg)} if  the following condition is satisfied: 
\[
\grHom_{A}(\Lambda, A)_{j} = 0  \textup{  unless   } j = \ell.
\]
In other words, $A$ is right swg if and only if  the degree $\ell$-part $A_{\ell} $ is an essential $A$-submodule of $A$. 
Observe that in the case where $A$ is a finite dimensional algebra over some field, 
$A$ is right swg if and only if $\soc A \subset A_{\ell}$. 
We note that in several papers, such  a finitely graded algebra  is said to have Gorenstein parameter. 
  
We call a finitely graded algebra $A = \bigoplus_{i = 0}^{\ell} A_{i}$
\textit{left  strictly well-graded (left swg)} if the opposite algebra $A^{\op}$ is right hwg.

We call  a finitely graded algebra $A$ \emph{strictly well-graded} if it is both right and left swg. 

In  \cite{Chen trivial}, Chen called a finite dimensional graded algebra $A= \bigoplus_{i = 0}^{\ell}A_{i}$ \emph{right well-graded} 
if $eA_{\ell} \neq 0$ for any primitive idempotent element $ e \in A_{0}$. 
It is easy to see that a right swg algebra is right well-graded but the converse does not hold in general.

\begin{definition}\label{definition hwg}
A finitely graded algebra $A$ is called \textit{right homologically well-graded} (right hwg)  
if $A_{A} \in \sfD(\GrMod A)$ is homologically well-graded.

$A$ is called \textit{left homologically  well-graded}(left hwg) if $A^{\op}$ 
is right  homologically  well-graded.  

$A$ is called \textit{homologically  well-graded}(hwg) if it
is left and right hwg.  
\end{definition}

We remark that a finitely graded algebra $A = \bigoplus_{i = 0}^{\ell} A_{i}$ is right hwg if and only if 
the object $A \in \sfD(\GrMod A)$ is an $\ell$-hwg object. 
We also remark that a right hwg algebra is right swg.

We give characterizations of a right hwg algebra. 

For an integer $i \in \ZZ$, we set $\proj^{< 0 \textup{-gen}} A := \add\{ A( -i) \mid i < 0\}$ and 
$\proj^{\geq 0 \textup{-gen}} A := \add\{ A( -i) \mid i \geq  0\}$.

\begin{proposition}\label{proposition hwg algebra}
Let $A = \bigoplus_{i= 0}^{\ell} A_{i} $ be a finitely graded algebra 
and $\ell := \max\{ i \mid A_{i } \neq 0\}$.  
Then the following conditions are equivalent. 
\begin{enumerate}[(1)] 
\item $A$ is right hwg. 




\item The canonical morphism $A \to \grRHom_{\Lambda}(A, A_{\ell})(-\ell)$ is an isomorphism in $\sfD(\GrMod A)$. 

\item 
$\sfD(\Mod^{\leq \ell-1} A) \perp \sfK^{\mrb}(\proj^{\geq 0 \textup{-gen}}A)$.

\item $A$ is right swg  and $\grExt_{A}^{>0}(A_{\leq k}, A) = 0$ for $k= 0, \cdots, \ell -1$. 

\item $A$ is right swg and $\grExt_{A}^{>0}(\Lambda, A) = 0$. 
\end{enumerate}  
\end{proposition}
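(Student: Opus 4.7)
The plan proceeds by the route $(1)\Leftrightarrow(2) \Rightarrow (3),(4),(5)$, with $(4)\Rightarrow(5)\Rightarrow(1)$ trivial, so that the only remaining implication is $(3)\Rightarrow(1)$.

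For $(1)\Leftrightarrow(2)$, apply Lemma \ref{201708191905} to the right $A$-module $M = A$ at $i = \ell$ (the only possibility, forced by $A_\ell \neq 0$): condition (1) here matches condition (1) of that lemma, and condition (2) matches condition (4).

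The engine for the three implications out of (2) is the derived hom--hom adjunction
\[
\grRHom_A(M, \grRHom_\Lambda(A, A_\ell)) \cong \grRHom_\Lambda(M, A_\ell) \qquad (M \in \sfD(\GrMod A)),
\]
valid because the restriction $\Mod A \to \Mod \Lambda$ is exact, so its right adjoint $\grHom_\Lambda(A,-)$ preserves graded injectives: for any injective $\Lambda$-resolution $I_\Lambda$ of $A_\ell$, the complex $\grHom_\Lambda(A, I_\Lambda)$ is a DG-injective complex of graded $A$-modules computing $\grRHom_\Lambda(A, A_\ell)$, and the underived adjunction lifts. Under (2) this gives $\grRHom_A(M, A) \cong \grRHom_\Lambda(M, A_\ell)(-\ell)$ for every $M \in \sfD(\GrMod A)$. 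Specialising $M = \Lambda$ yields $\grRHom_A(\Lambda, A) \cong A_\ell$ in graded degree $\ell$ and cohomological degree $0$, which is (5). Specialising $M = A_{\leq k}$ and using the splitting $A \cong A_{\leq k} \oplus A_{\geq k+1}$ as graded $\Lambda$-modules together with the vanishing $\grExt^{>0}_\Lambda(A, A_\ell) = 0$ (read off from the case $M = A$) gives $\grExt^{>0}_\Lambda(A_{\leq k}, A_\ell) = 0$ as a summand, hence (4). For $N \in \sfD(\Mod^{\leq \ell-1} A)$ and $i \geq 0$,
\[
\RHom_{\sfD(\GrMod A)}(N, A(-i)) \cong \grRHom_\Lambda(N, A_\ell)_{-\ell - i}
\]
vanishes since its nonvanishing would require $N_{\ell + i} \neq 0$, contradicting $N \in \Mod^{\leq \ell-1}$; devissage through $\sfK^{\mrb}(\proj^{\geq 0} A)$ via triangles then yields (3). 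The implications $(4)\Rightarrow(5)$ (take $k = 0$) and $(5)\Rightarrow(1)$ (split $\grRHom$ into $\grHom$, handled by swg, and $\grExt^{>0}$, given to vanish) are routine.

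The main technical step is $(3)\Rightarrow(1)$: condition (3) controls $\grRHom_A(\Lambda, A)$ only in the graded range $j \leq \ell - 1$, so vanishing for $j > \ell$ needs a separate argument. Applying (3) to $N = \Lambda(-k)$ with $0 \leq k \leq \ell - 1$ and $Q = A(-i)$ with $i \geq 0$ gives $\grExt^n_A(\Lambda, A)_{k-i} = 0$ for all $n$; since $k - i$ spans $(-\infty, \ell - 1]$, this covers all $j \leq \ell - 1$. For $j > \ell$ the vanishing is automatic from the grading alone: $\Lambda$ is generated in degree $0$ over $A$ and $A$ is non-negatively graded, so $\Lambda$ admits a graded projective resolution $P \to \Lambda$ with each $P^{-n} \in \Proj^{\geq 0} A$; then $\Hom_{\GrMod A}(A(-i), A(j)) = A_{i+j} = 0$ for $i \geq 0$ and $j > \ell$, so $\Hom_{\GrMod A}(P^{-n}, A(j)) = 0$ for all $n$ and hence $\grExt^*_A(\Lambda, A)_j = 0$. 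Combining the two ranges, $\grRHom_A(\Lambda, A)_j = 0$ for $j \neq \ell$, which is (1) by Lemma \ref{201708191905}.
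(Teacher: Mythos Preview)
Your proof is correct. The equivalence $(1)\Leftrightarrow(2)$ and the chain $(4)\Rightarrow(5)\Rightarrow(1)$ match the paper's argument, and your $(3)\Rightarrow(1)$ is the same as the paper's in substance: the paper phrases the vanishing for $j\leq\ell-1$ via $\frki_{j}I=\grRHom_{A}(\Lambda,A)_{j}=0$ (reading condition~(3) with $N=\Lambda(-j)$) and handles $j>\ell$ by invoking Lemma~\ref{20170819245}, which is your projective-resolution argument in the $\frks/\frki$ language.

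The genuine difference is in the forward direction. The paper proves $(1)\Rightarrow(3)$ and $(1)\Rightarrow(4)$ using the injective decomposition machinery: for $(3)$ it appeals to condition~(5) of Lemma~\ref{201708191905} applied to each $A(-i)$, and for $(4)$ it uses $\frks_{\ell}I\cong I$ together with Lemma~\ref{201708230000} to compute $\grRHom_{A}(A_{\leq k},A)\cong A_{\geq\ell-k}$. You instead run everything through the single derived adjunction $\grRHom_{A}(M,\grRHom_{\Lambda}(A,A_{\ell}))\cong\grRHom_{\Lambda}(M,A_{\ell})$, which under~(2) collapses $\grRHom_{A}(M,A)$ to a $\Lambda$-level computation for arbitrary $M$. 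This is a cleaner, more portable argument: it avoids the $\frks_{i}$ bookkeeping entirely and makes the Ext-vanishing in $(4)$ transparent via the $\Lambda$-splitting $A\cong A_{\leq k}\oplus A_{\geq k+1}$. The paper's route, on the other hand, yields the sharper formula $\grRHom_{A}(A_{\leq k},A)\cong A_{\geq\ell-k}$ as a byproduct, which your approach does not immediately give (you only extract the vanishing of higher Ext, not the identification of $\grHom$).
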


\begin{proof}
Throughout the proof $I$ denotes the graded injective resolution of $A$. 
It follows from Lemma \ref{201708191905} that the conditions (1), (2) are equivalent. 

(1) $\Rightarrow$ (3). 
$\sfD(\Mod^{\ZZ \setminus{ \{\ell +i \} }} A) \perp A(-i)$ for $i \in \ZZ$ by Lemma \ref{201708191905}.  
 Since $\sfK^{\mrb}(\proj^{\geq 0 \textup{-gen}} A )= \thick \{ A(-i) \mid i \geq 0\}$, we conclude that the condition (3) holds.

(3) $\Rightarrow $ (1). 
 We prove $\frks_{i} I = 0$ unless $i = \ell$. 
By Lemma \ref{20170819245}, we have $\frks_{> \ell} I = 0$.  
The condition (3) implies that  $\frki_{i} I = \grRHom_{A}(\Lambda, A)_{i} = 0$ for $ i \leq \ell -1$.  
Hence $\frks_{i}I = 0$ for  $  i \leq \ell -1$.

(1) $\Rightarrow$ (4). 
As is mentioned before right homologically well-gradedness implies right strictly well-gradedness.   
Since $\frks_{\ell} I \cong I$ by Lemma \ref{201708191905} (3),  we have the following isomorphisms  for $k= 0, \cdots, \ell -1$ in $\sfD(\GrMod A)$ by Lemma \ref{202003082113} (1)
\[
\grRHom_{A}(A_{\leq k}, A) \cong \grHom^{\bullet}_{A}(A_{\leq k}, I) \cong I_{\geq \ell -k} \cong A_{\geq \ell -k}. 
\]
In particular, we have $\grExt_{A}^{> 0}(A_{\leq k}, A) = \tuH^{>0}(A_{\geq \ell -k}) = 0$. 

The implications (4) $\Rightarrow$  (5) is clear. 
The implication (5) $\Rightarrow$ (1) follows from an isomorphism $\tuH^{n}(\grRHom_{A}(\Lambda, A)) \cong  \grExt_{A}^{n}(\Lambda, A)$ for $n \geq 0$. 
\end{proof}
%
%

\begin{corollary}\label{201708260214}
Assume that $\kk$ is a field and  a graded algebra  $A= \bigoplus_{i = 0}^{\ell}A_{i}$ is  finite dimensional and self-injective. 
Then $A$  is right well-graded if and only if it is right swg if and only if it is right  hwg.
\end{corollary}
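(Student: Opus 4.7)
The plan is to establish the string of equivalences by separating off the formal implications and then concentrating on two concrete ones. The implications right swg $\Rightarrow$ right well-graded (noted in Section~\ref{subsubsection: hwg algebra}) and hwg $\Rightarrow$ right swg (immediate from Definition~\ref{definition hwg}) require no work. Thus I need to establish (a) right well-graded $\Rightarrow$ right swg and (b) right swg $\Rightarrow$ hwg, both under the finite-dimensional self-injective hypothesis.

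For (a), I will exploit that, by self-injectivity, each indecomposable projective right $A$-module $eA$ (with $e$ a primitive idempotent of $\Lambda := A_0$) is also injective, hence has simple socle. If $eA_\ell \neq 0$, then $eA_\ell$ is a nonzero finite-dimensional right $\Lambda$-module and, $\Lambda$ being Artinian, contains a simple $\Lambda$-submodule $S$. Since $S$ is killed by $J(\Lambda)$ and $A_\ell \cdot A_{\geq 1} = 0$, the submodule $S$ is annihilated by the whole Jacobson radical $J(A) = J(\Lambda) \oplus A_{\geq 1}$, so $S \subseteq \soc(eA)$; simplicity of $\soc(eA)$ then forces $\soc(eA) = S \subseteq eA_\ell$. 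Summing over primitive idempotents yields $\soc(A_A) \subseteq A_\ell$, which is right swg.

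For (b), the implication right swg $\Rightarrow$ right hwg follows directly from Proposition~\ref{proposition hwg algebra}(5), since $\grExt_A^{>0}(\Lambda, A) = 0$ holds automatically when $A$ is self-injective. The main obstacle is to deduce left swg from right swg (after which the symmetric argument applied to $A^{\op}$ gives left hwg). I will reduce to the case that $A$ is basic by a graded Morita equivalence and then exploit the graded $k$-duality $D$. Let $d_i$ denote the degree in which the simple socle of $e_i A$ sits and $d'_j$ the degree in which the socle of the left projective $A e_j$ sits, so that right (resp. left) swg amounts to $d_i = \ell$ (resp. $d'_j = \ell$) for all indices. The $k$-dual $D(e_i A)$ is an indecomposable projective-injective graded left $A$-module whose top lies in degree $-d_i$ (dual to $\soc(e_i A)$ in degree $d_i$) and whose socle lies in degree $0$ (dual to $\top(e_i A)$ in degree $0$). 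Hence $D(e_i A) \cong A e_{\tau(i)}(d_i)$ for some bijection $\tau$ on primitive idempotents, and matching socle degrees yields $d'_{\tau(i)} = d_i$. Consequently, all $d_i = \ell$ precisely when all $d'_j = \ell$, completing the proof that right swg iff left swg, and hence iff hwg.
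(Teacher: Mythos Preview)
Your proof is correct. The paper states this corollary without proof, presumably regarding it as a routine consequence of Proposition~\ref{proposition hwg algebra}, so there is no ``paper's approach'' to compare against beyond that proposition.

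Your argument is precisely what is needed to make the corollary rigorous. Two of the four implications are purely formal, and the remaining two are handled well:
\begin{itemize}
\item For right well-graded $\Rightarrow$ right swg, your use of the fact that each $eA$ is indecomposable injective (hence has simple socle) is the standard way to see that $eA_\ell \neq 0$ forces $\soc(eA)\subseteq A_\ell$.
\item The step right swg $\Rightarrow$ right hwg via Proposition~\ref{proposition hwg algebra}(5) is immediate, as you note, since self-injectivity kills the higher $\grExt$.
\item The genuinely non-trivial point is right swg $\Rightarrow$ left swg, which you obtain via the $\kk$-duality $D$ and the Nakayama-type bijection $e_iA \leftrightarrow Ae_{\tau(i)}$ together with the degree bookkeeping $d'_{\tau(i)}=d_i$. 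This is exactly the right mechanism; the paper does not spell it out.
\end{itemize}

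One small remark: the reduction to the basic case is harmless but not strictly necessary. Your duality argument already works with any complete system of primitive orthogonal idempotents in $A_0$, since $D$ induces a bijection between the indecomposable projective right and left graded $A$-modules regardless of multiplicities. Dropping that reduction would spare you the (easy but unstated) verification that swg and hwg are graded Morita invariant.
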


\begin{remark}
Assume that $\kk$ is a field and  a graded algebra  $A= \bigoplus_{i = 0}^{\ell}A_{i}$ is  finite dimensional and self-injective. 
By \cite[Lemma 2.2]{Chen trivial}, $A$ is right well-graded if and only if it is left well-graded. 
Therefore, by Corollary \ref{201708260214}, $A$ is right hwg if and only if it is left hwg, if and only if it is hwg. 
\end{remark}

We leave the proof of the following lemma to the readers. 

\begin{lemma}\label{201708232231}
A finitely graded algebra $A = \bigoplus_{i = 0}^{\ell} A_{i}$ is 
right hwg (resp. left hwg) 
if and only if 
so is $A^{[\ell]}$.
\end{lemma}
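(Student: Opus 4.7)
My approach is to reduce both the right and left hwg statements to Proposition \ref{proposition hwg algebra}(3), then transport the orthogonality condition across the quasi-Veronese equivalence $\mathsf{qv}: \GrMod A \xrightarrow{\cong} \GrMod A^{[\ell]}$ recalled in Section \ref{Quasi-Veronese algebra construction}. Since $A$ has maximum degree $\ell$ while $A^{[\ell]}$ has maximum degree $1$, condition (3) for $A$, namely $\sfD(\Mod^{\leq \ell-1} A) \perp \sfK^{\mrb}(\proj^{\geq 0} A)$, should correspond under $\mathsf{qv}$ to the analogous condition $\sfD(\Mod^{\leq 0} A^{[\ell]}) \perp \sfK^{\mrb}(\proj^{\geq 0} A^{[\ell]})$ for $A^{[\ell]}$.

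Carrying this out requires two transport identities. First, $\mathsf{qv}$ restricts to an equivalence $\Mod^{\leq \ell-1} A \xrightarrow{\cong} \Mod^{\leq 0} A^{[\ell]}$. This is immediate from the definition of $\mathsf{qv}$, since $\mathsf{qv}(M)_n$ is assembled from the components $M_{n\ell}, M_{n\ell+1}, \ldots, M_{n\ell+\ell-1}$, and hence $\mathsf{qv}(M)_{\geq 1} = 0$ is equivalent to $M_{\geq \ell} = 0$. Second, $\mathsf{qv}(\proj^{\geq 0} A) = \proj^{\geq 0} A^{[\ell]}$: using the intertwining relation $(1)\circ\mathsf{qv} = \mathsf{qv}\circ(\ell)$, it suffices to identify $\mathsf{qv}(A(-i))$ for $0 \leq i \leq \ell-1$ with the indecomposable summand $e_i A^{[\ell]}$ of $A^{[\ell]}$ corresponding to the primitive idempotent $e_i$ of $\nabla A = (A^{[\ell]})_0$; together these summands exhaust $A^{[\ell]}$, yielding the desired identification after passing to $\Add$.

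With these in hand, Proposition \ref{proposition hwg algebra}(3) immediately gives the right-hwg equivalence. For the left-hwg case, I apply the right-hwg statement to $A^{\op}$, using the canonical graded algebra isomorphism $(A^{\op})^{[\ell]} \cong (A^{[\ell]})^{\op}$ that follows from transposing the matrix presentations of $\nabla A$ and $\Delta A$. The main obstacle is the bookkeeping in identifying $\mathsf{qv}(A(-i))$ with $e_i A^{[\ell]}$, which is a routine but slightly tedious unpacking of the definition of $\mathsf{qv}$; everything else is a formal consequence of the equivalence $\mathsf{qv}$ and the characterization in Proposition \ref{proposition hwg algebra}.
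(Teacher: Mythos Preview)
The paper leaves this proof to the reader, so there is no authorial argument to compare against. Your approach via Proposition~\ref{proposition hwg algebra}(3) and the quasi-Veronese equivalence $\mathsf{qv}$ is correct and is precisely the kind of routine verification the authors have in mind: the two transport identities you isolate, $\mathsf{qv}(\Mod^{\leq \ell-1} A)=\Mod^{\leq 0} A^{[\ell]}$ and $\mathsf{qv}(\proj^{\geq 0} A)=\proj^{\geq 0} A^{[\ell]}$, follow directly from the definition of $\mathsf{qv}$ and the relation $(1)\mathsf{qv}=\mathsf{qv}(\ell)$, and the left-hwg case reduces to the right-hwg one via the graded isomorphism $(A^{\op})^{[\ell]}\cong(A^{[\ell]})^{\op}$ as you say.
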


\subsection{Homologically well-gradedness and the Happel functor}

Let $A$ be a finitely graded Noetherian algebra,  so that we have the Happel functor $\varpi: \sfD^{\mrb}(\mod^{[0,\ell -1]} A) \to \grSing A$. 
Homologically well-gradedness guarantees that the Happel functor $\varpi$ is fully faithful.

\begin{proposition}\label{201708231410}
If a finitely graded Noetherian algebra $A$ is right hwg, 
then  the Happel functor $\varpi$ is fully faithful. 
\end{proposition}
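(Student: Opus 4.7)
My plan is to factor $\varpi$ as $\sfD^{\mrb}(\mod^{[0,\ell-1]}A)\xrightarrow{\iota}\sfD^{\mrb}(\grmod A)\xrightarrow{\pi}\grSing A$ and prove each factor is fully faithful on the relevant objects. For $\iota$, any $M\in\mod^{[0,\ell-1]}A$ admits a graded projective resolution $P_{\bullet}\to M$ in $\grmod A$ with each $P_{n}$ generated in degrees $\geq 0$; truncating to degree $\ell-1$ gives a projective resolution $\tilde P_{\bullet}\to M$ inside $\mod^{[0,\ell-1]}A$ with $\tilde P_{n}\in\add T$. For any $N\in\mod^{[0,\ell-1]}A$, every graded map $P_{n}\to N$ automatically kills $P_{n}^{\geq\ell}$ because $N$ vanishes in degrees $\geq\ell$, so $\Hom_{\grmod A}(\tilde P_{\bullet},N)=\Hom_{\grmod A}(P_{\bullet},N)$ as complexes, which gives $\Ext^{n}_{\mod^{[0,\ell-1]}A}(M,N)=\Ext^{n}_{\grmod A}(M,N)$. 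A standard d\'evissage (e.g.\ via brutal truncation or a spectral sequence) extends this to bounded complexes, proving $\iota$ fully faithful.

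For $\pi$ restricted to the image of $\iota$, the key inputs are two orthogonalities. First, (I) $\sfD^{\mrb}(\mod^{[0,\ell-1]}A)\perp\sfK^{\mrb}(\proj^{\geq 0}A)$ is precisely Proposition~\ref{proposition hwg algebra}(3) for the right hwg hypothesis. Second, (II) $\sfK^{\mrb}(\proj^{\leq-1}A)\perp\sfD^{\mrb}(\mod^{[0,\ell-1]}A)$ is tautological: $\Hom_{\sfD^{\mrb}(\grmod A)}(A(-i)[n],M)=\tuH^{-n}(M)_{i}$ vanishes for $i\notin[0,\ell-1]$ whenever $\tuH^{*}(M)\in\mod^{[0,\ell-1]}A$, and d\'evissage handles bounded complexes of graded projectives. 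These combine via a canonical splitting of any $P\in\sfK^{\mrb}(\grproj A)$: the termwise decomposition $P^{n}=P^{n}_{\leq-1}\oplus P^{n}_{\geq 0}$ by the indexing of shifts of $A$ makes $P_{\leq-1}$ a subcomplex of $P$, because $\Hom_{\grmod A}(A(-i),A(-j))=A_{i-j}=0$ whenever $i<j$ prevents any differential from going $P^{n}_{\leq-1}\to P^{n+1}_{\geq 0}$. The resulting termwise-split triangle $P_{\leq-1}\to P\to P_{\geq 0}\to P_{\leq-1}[1]$ sits in $\sfK^{\mrb}(\proj^{\leq-1}A)*\sfK^{\mrb}(\proj^{\geq 0}A)$.

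For injectivity, if $f\colon M\to N$ factors in $\sfD^{\mrb}(\grmod A)$ as $M\xrightarrow{a}P\xrightarrow{b}N$ through some $P\in\sfK^{\mrb}(\grproj A)$, then (I) gives $\Hom(M,P_{\geq 0})=0$, so $a$ factors through $P_{\leq-1}$ as $\alpha\circ a'$; then (II) gives $b\circ\alpha\in\Hom(P_{\leq-1},N)=0$, hence $f=0$. For surjectivity, represent $\phi\in\Hom_{\grSing A}(M,N)$ by a right roof $M\xrightarrow{g}Y\xleftarrow{t}N$ fitting in $N\xrightarrow{t}Y\to C\xrightarrow{\delta}N[1]$ with $C\in\sfK^{\mrb}(\grproj A)$. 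By (II), $\Hom(C_{\leq-1},N[1])=0$, so $\delta$ descends through $p\colon C\twoheadrightarrow C_{\geq 0}$ to some $\bar\delta\colon C_{\geq 0}\to N[1]$. Define $Y^{*}$ by the triangle $N\to Y^{*}\to C_{\geq 0}\xrightarrow{\bar\delta}N[1]$, and let $u\colon Y\to Y^{*}$ be produced by the $3\times 3$-lemma; then $\cone(u)=C_{\leq-1}[1]\in\sfK^{\mrb}(\grproj A)$, so $u$ is invertible in $\grSing A$. The refined right roof $(u\circ g,u\circ t)$ represents the same $\phi$ and has $\cone(u\circ t)=C_{\geq 0}\in\sfK^{\mrb}(\proj^{\geq 0}A)$, whence (I) gives $\Hom(M,C_{\geq 0})=0$. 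The long exact sequence for $\Hom(M,-)$ on the new triangle then lifts $u\circ g$ along $u\circ t$ to some $f\colon M\to N$, and inverting $u$ in $\grSing A$ yields $\varpi(f)=\phi$.

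The main obstacle is that $\sfK^{\mrb}(\grproj A)$ is two-sided in the degree filtration while the hwg hypothesis supplies orthogonality only from one side. The delicate step is the surjectivity: the tautological orthogonality (II) must first be deployed to descend the connecting map of the roof through $C_{\geq 0}$ via a $3\times 3$-construction, and only after this ``normalization'' of the roof can the hwg orthogonality (I) be brought to bear to produce the required lift of the roof to an honest morphism.
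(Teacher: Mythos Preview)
Your proof is correct and rests on exactly the same three ingredients as the paper's: the hwg orthogonality (I), the tautological orthogonality (II), and the decomposition $\sfK^{\mrb}(\grproj A)=\sfK^{\mrb}(\proj^{\leq -1}A)*\sfK^{\mrb}(\proj^{\geq 0}A)$. The difference is packaging. The paper applies Verdier's general proposition on Verdier quotients twice (once to kill $\sfK^{\mrb}(\proj^{<0}A)$ using (II), once more to kill $\sfK^{\mrb}(\proj^{\geq 0}A)$ using (I)), whereas you unwind that proposition by hand into explicit roof manipulations; your surjectivity argument via the octahedral refinement of the roof is precisely what Verdier's statement encapsulates. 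You also spell out why $\sfD^{\mrb}(\mod^{[0,\ell-1]}A)\hookrightarrow\sfD^{\mrb}(\grmod A)$ is fully faithful, a point the paper leaves implicit.

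One small remark: your sentence ``the termwise decomposition $P^{n}=P^{n}_{\leq-1}\oplus P^{n}_{\geq 0}$ by the indexing of shifts of $A$'' reads as if every object of $\grproj A$ is a direct sum of shifts of $A$. That need not hold over a general Noetherian base. The fix is short: write $P^{n}$ as a summand of a graded free module $F=F_{\leq -1}\oplus F_{\geq 0}$ via an idempotent $e$; since $\Hom(F_{\leq -1},F_{\geq 0})=0$ the idempotent is upper triangular, so $e_{11},e_{22}$ are idempotent and one obtains a split short exact sequence $0\to\im e_{11}\to P^{n}\to\im e_{22}\to 0$ with $\im e_{11}\in\proj^{\leq -1}A$ and $\im e_{22}\in\proj^{\geq 0}A$. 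The same vanishing shows this decomposition is canonical, hence functorial in $P^{n}$, and your argument then proceeds unchanged. The paper sidesteps this by citing \cite{adasore} for the decomposition of $\sfK^{\mrb}(\grproj A)$.
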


This proposition is a partial generalization of Orlov's result given in \cite{Orlov}.
In view of Proposition \ref{proposition hwg algebra}, the proof is essentially the same with  Orlov's original proof. 
For the convenience of  the readers, we provide the whole proof.

\begin{proof}
Since $\Hom_{\GrMod A}(P, M) = 0$ for $P \in \proj^{< 0 \textup{-gen}} A, M \in \mod^{\geq 0} A$, 
we have $\sfK^{\mrb}(\proj^{<0\textup{-gen}}A) \perp \sfD^{\mrb}(\mod^{\geq 0} A)$. 
Thus in particular
we have  
$\sfK^{\mrb}(\proj^{<0\textup{-gen}}A) \perp  \left( \sfD^{\mrb}(\mod^{[0,\ell -1]} A)*\sfK^{\mrb}(\proj^{\geq 0\textup{-gen}}A)\right)$. 
Therefore, the composition $\widetilde{\varpi}$ of canonical functors below  is fully faithful by \cite[Proposition II.2.3.5]{Verdier}. 
\[
\begin{split}
\widetilde{\varpi}:  
\sfD^{\mrb}(\mod^{[0,\ell -1]} A) * \sfK^{\mrb}(\proj^{\geq 0\textup{-gen}} A)  
&  \hookrightarrow \sfD^{\mrb}(\grmod A) \\
& \xrightarrow{ \ \mathsf{qt}_{1} \ }  \sfD^{\mrb}(\grmod A)/\sfK^{\mrb}(\proj^{<0 \textup{-gen}}A).  
\end{split}
\]

Since $A$ is hwg, we have $\sfD^{\mrb}(\mod^{[0,\ell-1]} A) \perp \sfK^{\mrb}(\proj^{\geq 0 \textup{-gen}} A)$ by Proposition \ref{proposition hwg algebra}. 
Hence by \cite{Verdier} again, 
if we denote the following composition by $F$, then it  is fully faithful 
\[
\begin{split}
F: \sfD^{\mrb}(\mod^{[0,\ell -1]} A) 
 &\to \sfD^{\mrb}(\grmod A)/\sfK^{\mrb}(\proj^{<0 \textup{-gen} }A) \\
 & \xrightarrow{ \ \mathsf{qt}_{2} \ } 
 \Bigl( \sfD^{\mrb}(\grmod A)/\sfK^{\mrb}(\proj^{<0 \textup{-gen}}A) \Bigr)/\widetilde{\varpi}( \sfK^{\mrb}(\proj^{\geq 0\textup{-gen}} A))
\end{split} 
\]
where the first arrow is the restriction 
$\widetilde{\varpi}|_{ \sfD^{\mrb}(\mod^{[0,\ell -1]} A)}$ of $\widetilde{\varpi}$ 
and the second arrow is the quotient functor.

Observe that the kernel $\Ker( \mathsf{qt}_{2} \mathsf{qt}_{1})$ of the composition $\mathsf{qt}_{1}\mathsf{qt_{2}}$ of the quotient functors $\mathsf{qt}_{1}, \mathsf{qt_{2}}$ 
is $\sfK^{\mrb}(\proj^{<0\textup{-gen}}A)* \sfK^{\mrb}(\proj^{\geq 0\textup{-gen}}A) = \sfK^{\mrb}(\grproj A)$. 
 It follows from the universal properties of the quotient functors that 
there exists an equivalence $G$ which completes the following commutative diagram.  
\[
\begin{xymatrix}@C=15pt{
\sfD^{\mrb}(\grmod A) \ar[rr]^-{\mathsf{qt}_{1}} \ar[d]_{\pi} &&
\sfD^{\mrb}(\grmod A)/\sfK^{\mrb}(\proj^{<0 \textup{-gen}}A) \ar[d]^{\mathsf{qt}_{2}} \\
\grSing A = \sfD^{\mrb}(\grmod A) /\sfK^{\mrb}(\grproj A)  && 
\Bigl( \sfD^{\mrb}(\grmod A)/\sfK^{\mrb}(\proj^{<0 \textup{-gen}}A) \Bigr)/\widetilde{\varpi}( \sfK^{\mrb}(\proj^{\geq 0\textup{-gen}} A))
\ar@{-->}[ll]_-{ \cong }^-{G} 
}\end{xymatrix}
\]
Therefore the composition $GF :\sfD^{\mrb}(\mod^{[0,\ell -1]} A) \to \grSing A$ is naturally isomorphic to the Happel functor $\varpi$. 
Since $F$ is fully faithful, we conclude that $\varpi$ is fully faithful as desired. 
\end{proof}

\section{A characterization of homologically well-graded IG-algebras}\label{characterization}

In this Section \ref{characterization}, we give a characterization of hwg IG-algebras from  a view point of self-duality. 
We start by recalling the definition of  a cotilting bimodule and its important property from  \cite{Miyachi}.  

\begin{definition}\label{definition of cotilting modules} 
Let $\Lambda$ be a Noetherian algebra. 
A $\Lambda$-$\Lambda$-bimodule $C$ is called \textit{cotilting} 
if the following conditions are satisfied. 
\begin{enumerate}[(1)] 
\item $C$ is finitely generated as both a right $\Lambda$-module 
and a left $\Lambda$-module.  

\item 
$\injdim\limits_{\Lambda} C< \infty, \injdim\limits_{\Lambda^{\op}} C < \infty$. 

\item 
$\Ext_{\Lambda}^{>0}(C,C) = 0, \ \Ext_{\Lambda^{\op}}^{>0}(C,C) = 0$. 

\item 
The natural algebra morphism $\Lambda  \to \Hom_{\Lambda}(C, C)$ 
is an isomorphism.

The natural algebra morphism $\Lambda^{\op} \to \Hom_{\Lambda^{\op}}(C, C)$ 
is an isomorphism.
\end{enumerate}
\end{definition}

We point out several well-known facts.
A Noetherian algebra $\Lambda$ is  IG if and only if $\Lambda$ is a cotilting bimodule over $\Lambda$. 
Moreover, over an IG-algebra $\Lambda$, cotilting bimodules are the same notion with tilting bimodules.

\begin{theorem}[{Miyachi \cite[Corollary 2.11]{Miyachi}}]\label{cotilting theorem}
Let $\Lambda$ be a Noetherian algebra and $C$ be a cotilting bimodule over $\Lambda$. 
Then $\RHom(-, C)$ induces an equivalence of triangulated categories. 
\[
\RHom_{\Lambda}(-, C) : 
\sfD^{\mrb}(\mod \Lambda) \cong \sfD^{\mrb}(\mod \Lambda^{\op})^{\op}: \RHom_{\Lambda^{\op}}(-,C). 
\]
\end{theorem}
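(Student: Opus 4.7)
The plan is to take $F := \RHom_{\Lambda}(-,C)$ and $G := \RHom_{\Lambda^{\op}}(-,C)$ and show that they are mutually quasi-inverse contravariant equivalences, with the unit of the adjunction given by the natural biduality morphism $\eta_{M}: M \to GF(M) = \RHom_{\Lambda^{\op}}(\RHom_{\Lambda}(M,C), C)$ and symmetrically the counit $\epsilon_{N}$. First I would verify well-definedness: for $M \in \mod \Lambda$, pick a resolution $P_{\bullet} \to M$ by finitely generated projectives (available since $\Lambda$ is Noetherian); condition (1) ensures each $\Hom_{\Lambda}(P_{i},C)$ is finitely generated over $\Lambda^{\op}$, condition (2) bounds the length of the resulting complex, and Noetherianity of $\Lambda^{\op}$ passes finite generation to cohomology. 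Hence $F$ restricts to a functor $\sfD^{\mrb}(\mod \Lambda) \to \sfD^{\mrb}(\mod \Lambda^{\op})^{\op}$, and symmetrically for $G$.

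Next I would check that $\eta_{\Lambda}$ is an isomorphism: by definition $F(\Lambda) = C$, so $GF(\Lambda) = \RHom_{\Lambda^{\op}}(C,C)$, which by conditions (3) and (4) equals $\Lambda$ concentrated in degree zero. Since the locus where an exact natural transformation is an isomorphism is closed under shifts, cones, and direct summands, one deduces that $\eta$ is already an isomorphism on all of $\thick(\Lambda) = \sfK^{\mrb}(\proj \Lambda) \subseteq \sfD^{\mrb}(\mod \Lambda)$.

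The key step is to upgrade this from perfect complexes to arbitrary objects of $\sfD^{\mrb}(\mod \Lambda)$. Here I would use a Hartshorne-style way-out dévissage. Set $d := \max\{\injdim_{\Lambda} C, \injdim_{\Lambda^{\op}} C\}$, which is finite by (2); then $F$ and $G$ each have cohomological amplitude at most $d$, so $GF$ has amplitude at most $2d$, while $\id$ has amplitude $0$. Given $M \in \sfD^{[a,b]}(\mod \Lambda)$, take a projective resolution $P \to M$ by finitely generated projectives and form the brutal truncation triangle $\sigma^{<-n} P \to P \to \sigma^{\geq -n} P \to$ with $n$ much larger than $b-a+2d$. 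On the bounded complex $\sigma^{\geq -n} P \in \sfK^{\mrb}(\proj \Lambda)$ the map $\eta$ is already an isomorphism by the previous paragraph, while the amplitude bound forces both $\id(\sigma^{<-n}P)$ and $GF(\sigma^{<-n}P)$ to have cohomology outside the window $[a,b]$, so they cannot obstruct $\eta_{M}$ from being an isomorphism. Symmetrically $\epsilon_{N}$ is an isomorphism for every $N \in \sfD^{\mrb}(\mod \Lambda^{\op})$, completing the proof.

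The principal obstacle is precisely this last step: because $\Lambda$ is only assumed Noetherian rather than of finite global dimension, $\thick(\Lambda)$ inside $\sfD^{\mrb}(\mod \Lambda)$ is the proper subcategory $\sfK^{\mrb}(\proj \Lambda)$, and one cannot extend $\eta_{\Lambda} \cong \id$ to a general bounded complex of finitely generated modules by triangulated-closure arguments alone. The finite injective dimension hypothesis on $C$ is exactly the ingredient that makes the way-out dévissage work, letting one truncate projective resolutions safely and push the error terms out of the relevant cohomological range.
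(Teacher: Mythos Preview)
The paper does not give its own proof of this theorem: it is quoted verbatim as a result of Miyachi and used as a black box, so there is no argument in the paper to compare yours against.

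That said, your outline is sound and is essentially the classical route (the Hartshorne ``way-out'' d\'evissage). Two small points deserve a sharper formulation in the final step:

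\begin{itemize}
\item You argue that the syzygy term $\sigma^{<-n}P$ and its image under $GF$ have cohomology outside the window $[a,b]$, and conclude that $\eta_{M}$ is an isomorphism. Strictly speaking this shows $H^{i}(\eta_{M})$ is an isomorphism only for $i\in[a,b]$; you still need $H^{i}(GF(M))=0$ for $i\notin[a,b]$. This follows from the same triangle: for any fixed $i$, choosing $n$ large enough makes both $H^{i}(\sigma^{<-n}P)$ and $H^{i}(GF(\sigma^{<-n}P))$ vanish (the latter because $GF$ has amplitude $\leq 2d$), so $H^{i}(GF(M))\cong H^{i}(GF(\sigma^{\geq -n}P))\cong H^{i}(\sigma^{\geq -n}P)$, and for $-n<i<a$ or $i>b$ this last group is $H^{i}(M)=0$. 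Since $GF(M)$ has a priori bounded amplitude $[a-d,\,b+d']$, only finitely many $i$ need checking and a single $n$ suffices.

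\item In the well-definedness paragraph, condition~(2) does not literally bound the length of the complex $\Hom_{\Lambda}(P_{\bullet},C)$ (which may be unbounded to the right); it bounds the cohomological amplitude of $\RHom_{\Lambda}(M,C)$. The conclusion is unchanged.
\end{itemize}

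With these clarifications your argument is complete and is the standard one for this statement.
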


The following theorem gives a characterization of hwg IG-algebra. 


\begin{theorem}\label{main theorem}
Let $A= \bigoplus_{i = 0}^{\ell}A_{i}$ be a  finitely graded Noetherian  algebra with $\Lambda := A_{0}$. 
Then the following conditions are  equivalent. 

\begin{enumerate}[(1)]
\item $A$ is a hwg IG-algebra.

\item $A$ is a right hwg IG-algebra. 

\item $A$ is a right swg IG-algebra and the module $T$ is CM. 

\item The following conditions are satisfied. 

\begin{enumerate}[(4-i)]
\item  $A_{\ell}$ is a cotilting bimodule over $\Lambda$ 

\item 
There exists a $\Lambda$-$A$-bimodule isomorphism 
\[
\alpha: A \cong \grHom_{\Lambda}(A, A_{\ell})(-\ell)
\]

\item $\grExt_{\Lambda}^{> 0}(A, A_{\ell}) = 0$. 
\end{enumerate}
\end{enumerate}
If these conditions are satisfied,  then $\injdim_{A} A = \injdim_{\Lambda} A_{\ell}$.  
\end{theorem}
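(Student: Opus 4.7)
The plan is to establish the equivalences via the chain $(1)\Rightarrow(4)\Rightarrow(1)$, $(1)\Rightarrow(2)\Leftrightarrow(3)$, and finally the delicate closing implication $(2)\Rightarrow(1)$. The implication $(1)\Rightarrow(2)$ is trivial. For $(2)\Leftrightarrow(3)$, I would rewrite the canonical object as $T \cong \bigoplus_{i=0}^{\ell-1}(A_{\leq \ell-1-i})(-i)$, so that CM-ness of $T$ is exactly the family of vanishings $\grExt_{A}^{>0}(A_{\leq k}, A) = 0$ for $k = 0,\ldots,\ell-1$; the equivalence then follows from $(1)\Leftrightarrow(4)$ in Proposition \ref{proposition hwg algebra}, since the right swg hypothesis is common to both (2) and (3).

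For $(1)\Rightarrow(4)$, I would extract (4-ii) and (4-iii) directly from Proposition \ref{proposition hwg algebra}: the canonical morphism $\tilde{\phi}_{\ell}: A \to \grHom_{\Lambda}(A, A_{\ell})(-\ell)$ introduced in Section \ref{Homologically well-graded algebras} is a morphism of $\Lambda$-$A$-bimodules by direct inspection, and right hwg turns it into a derived isomorphism concentrated in cohomological degree zero, yielding (4-ii) as a bimodule isomorphism together with (4-iii) as a byproduct. The cotilting axioms (4-i) for $A_{\ell}$ then split into six items: finite generation on each side follows from the observation $A_{\ell}\cdot A_{\geq 1} = 0 = A_{\geq 1}\cdot A_{\ell}$, which makes $A_{\ell}$ a graded two-sided $A$-submodule of $A$, combined with the two-sided graded Noetherianness supplied by IG; finiteness of right injective dimension is Corollary \ref{201810022155} applied to the right hwg isomorphism, and the analogous left bound follows from applying the left-sided analogue of Lemma \ref{20170819245}(3) to a left graded injective resolution of $A$ and identifying its degree-$\ell$ piece with an injective resolution of $A_{\ell}$ as a left $\Lambda$-module; the $\Ext$-vanishings and the canonical endomorphism-algebra isomorphisms on each side come from (4-iii) and the degree-zero part of (4-ii), together with their left counterparts produced by left hwg.

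For $(4)\Rightarrow(1)$, the key tool is Miyachi's biduality (Theorem \ref{cotilting theorem}). The combination of (4-ii) and (4-iii) reads as the derived isomorphism $A \simeq \grRHom_{\Lambda}(A, A_{\ell})(-\ell)$ of $\Lambda$-$A$-bimodules, so by Proposition \ref{proposition hwg algebra} the algebra $A$ is right hwg. Applying $\grRHom_{\Lambda^{\op}}(-, A_{\ell})$ and invoking the cotilting biduality $\grRHom_{\Lambda^{\op}}(\grRHom_{\Lambda}(-, A_{\ell}), A_{\ell}) \simeq \mathrm{id}$ produces the left-sided counterpart $A \simeq \grRHom_{\Lambda^{\op}}(A, A_{\ell})(-\ell)$ of $A$-$\Lambda$-bimodules, which by the left version of Proposition \ref{proposition hwg algebra} shows that $A$ is left hwg. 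The IG property and the concluding formula $\injdim_{A} A = \injdim_{\Lambda} A_{\ell}$ then both follow from Corollary \ref{201810022155} applied on each side, using that the injective dimensions of $A_{\ell}$ on either side are finite by (4-i).

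The main obstacle is closing the cycle through $(2)\Rightarrow(1)$: under the one-sided hypothesis of right hwg IG, the cotilting data of $A_{\ell}$ is only established on the right, and the biduality route of the previous paragraph is not yet available. My plan here is to reduce to the case $\ell = 1$ via the quasi-Veronese construction of Section \ref{Quasi-Veronese algebra construction}: by Lemma \ref{201708232231} and the $\kk$-linear equivalence $\GrMod A \simeq \GrMod A^{[\ell]}$, both right hwg and the IG property pass between $A$ and $A^{[\ell]}$, so (2) for $A$ implies (2) for $A^{[\ell]}$. For the trivial extension $A^{[\ell]} = \nabla A \oplus \Delta A$ of maximal degree one, the classical theorem of Fossum-Griffith-Reiten characterises the IG property by cotiltingness of $\Delta A$ as a $\nabla A$-bimodule, so the full two-sided cotilting data of $\Delta A$ is supplied by the IG hypothesis alone. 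Transporting this information back through the block-matrix identifications of $\nabla A$ and $\Delta A$ recovers the missing left-sided cotilting data of $A_{\ell}$ over $\Lambda$, at which point (4) is in force for $A$ and the biduality argument of the preceding paragraph concludes.
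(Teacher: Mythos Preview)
Your chain $(1)\Rightarrow(2)\Leftrightarrow(3)$, $(1)\Rightarrow(4)$, $(4)\Rightarrow(1)$ is essentially sound (with one caveat below), but the closing implication $(2)\Rightarrow(1)$ has a genuine gap. You write that ``the classical theorem of Fossum--Griffith--Reiten characterises the IG property by cotiltingness of $\Delta A$.'' This is not what FGR proves: their result gives only the direction \emph{cotilting $\Rightarrow$ IG}. The converse is false for trivial extensions in general. A concrete counterexample: $A = \kk[x]/(x^{2}) \times \kk$ with $\deg x = 1$ is self-injective (hence IG) with $\ell = 1$, but $A_{1} \cong \kk \times 0$ is not cotilting over $A_{0} = \kk \times \kk$, since the canonical map $A_{0} \to \End_{A_{0}}(A_{1})$ is not injective. (The Foxby--Reiten converse is specific to the commutative local setting; extending it is precisely the content of Section~\ref{commutative case}.) Reducing to $\ell = 1$ via quasi-Veronese therefore buys you nothing: you still face the same problem of extracting the \emph{left}-sided cotilting data from right hwg IG alone, and you have simply pushed the difficulty into an incorrectly cited black box.

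The paper closes this gap by going through $(2)\Rightarrow(4)$ directly, exploiting the IG duality $(-)^{*} = \grRHom_{A}(-,A)$, which is a contravariant equivalence $\sfD^{\mrb}(\grmod A) \simeq \sfD^{\mrb}(\grmod A^{\op})$ available under the IG hypothesis alone. Right hwg yields $(\Lambda(-\ell))^{*} \cong A_{\ell}$ via Corollary~\ref{20182242126}, and the duality then converts $\RHom_{\Lambda^{\op}}(A_{\ell}, A_{\ell})$ into $\RHom_{\sfD(\GrMod A)}(\Lambda(-\ell), \Lambda(-\ell)) \cong \Lambda$, supplying the left endomorphism and $\Ext$-vanishing conditions without any left hwg assumption. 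Finiteness of $\injdim_{\Lambda^{\op}} A_{\ell}$ comes from the bounded left graded injective resolution of $A$ (left IG suffices; left hwg is not needed). This is the missing idea.

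A smaller point on your $(4)\Rightarrow(1)$: you claim the derived isomorphism (4-ii)+(4-iii) makes $A$ right hwg ``by Proposition~\ref{proposition hwg algebra},'' but condition~(2) there concerns the \emph{canonical} morphism $\phi_{\ell}$, not an arbitrary bimodule isomorphism $\alpha$. The paper bridges this carefully: it shows $\alpha_{1} := \alpha(1) \in \End_{\Lambda}(A_{\ell})$ is invertible and then $\tuH^{0}(\phi_{\ell}) = (\alpha_{1}^{-1}\circ{-})\circ\alpha$. Alternatively, you could bypass the canonical morphism by observing that any right $A$-module isomorphism $A \simeq \grRHom_{\Lambda}(A, A_{\ell})(-\ell)$ already forces $\sfD(\Mod^{\ZZ\setminus\{\ell\}}A) \perp A$ by adjunction, which is condition~(5) of Lemma~\ref{201708191905}. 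Either way, a sentence is needed.
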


\begin{remark}\label{remark to main theorem}
The condition (4-ii), (4-iii) are summarized to the condition: 

(4-ii+iii) There exists an isomorphism 
\[
\hat{\alpha}: A \cong \grRHom_{\Lambda}(A, A_{\ell})(-\ell)
\]
in the derived category of $\Lambda$-$A$-bimodules. 
\end{remark}

\begin{proof}
The implication (1) $\Rightarrow$ (2) is clear. 
The equivalence (2)  $\Leftrightarrow$ (3) follows from Proposition \ref{proposition hwg algebra}.

We prove (2) $\Rightarrow$ (4). 
The conditions (4-ii) and (4-iii) follows from (4) of Lemma \ref{201708191905}. 
It remains to show that $A_{\ell}$ is a cotilting bimodule over $\Lambda$.  

As $A$ is Noetherian, $A_{\ell}$ is finitely generated on both sides over $\Lambda$.  
Let $I\in \sfC^{\mrb}(\GrInj A)$ be a graded injective resolution of $A$. 
By Lemma \ref{20170819245}.(3), $\frki_{\ell} I$ is a $\Lambda$-injective resolution of $A_{\ell}$. 
Moreover, it is bounded since $I$ is bounded.     
Therefore, $\injdim_{\Lambda} A_{\ell}  < \infty$. 
Similarly, we have $\injdim_{\Lambda^{\op}} A_{\ell} < \infty$. 

Looking at the degree $0$-part of the isomorphism $\phi_{\ell}: A \cong \grRHom_{\Lambda}(A, A_{\ell})( -\ell)$,  
we obtain an isomorphism $\Lambda \cong \RHom_{\Lambda}(A_{\ell}, A_{\ell})$.

We set $(-)^{*} := \grRHom_{A}(-,A):  \sfD^{\mrb}(\grmod A) \to \sfD^{\mrb}(\grmod A^{\op})$. 
We claim that  $ (\Lambda(-\ell))^{*} \cong A_{\ell}$.  
Indeed, since $A$ is right hwg, we have $(\Lambda(-\ell))^{*}_{i} = \grRHom_{A}(\Lambda, A)_{i+ \ell} = 0$ for $i \neq 0$. 
By Corollary \ref{20182242126}, we have \[
(\Lambda(-\ell))^{*}_{0} = \RHom_{\GrMod A}(\Lambda( -\ell), A) \cong \RHom_{\Lambda}(\Lambda, A_{\ell}) = A_{\ell}.
\]
This finishes the proof of claim.

As we remarked above, $A$ is a cotilting module over $A$. 
Therefore, by a graded version of Theorem \ref{cotilting theorem}, 
the functor $(-)^{*} := \grRHom_{A}(-,A)$ give a contravariant equivalence 
from $\sfD^{\mrb}(\grmod A)$ to $\sfD^{\mrb}(\grmod A^{\op})$. 
Therefore we obtain an isomorphism $\Lambda \cong \RHom_{\Lambda^{\op}} (A_{\ell}, A_{\ell})$ as follows 
\[
\begin{split}
\Lambda 
 \cong \RHom_{\GrMod A }(\Lambda(-\ell), \Lambda(-\ell)) 
  \xrightarrow{ \ \ \cong \ (-)^{*} } \RHom_{\GrMod A^{\op}}(A_{\ell}, A_{\ell})  \cong  \RHom_{\Lambda^{\op}}(A_{\ell}, A_{\ell} ). 
\end{split} 
\]
This shows that $A_{\ell}$ is a cotilting bimodule over $\Lambda$ as desired.

(4) $\Rightarrow$ (1). 
First we prove $\injdim\nolimits_{A} A < \infty$. 
By (4-ii) 
it is enough to show that  the graded $A$-module $\grHom_{\Lambda}(A, A_{\ell})$ has finite injective dimension. 
It is easy to see that 
if  $J$ is an injective $\Lambda$-module, 
then  the graded $A$-module $\grHom_{\Lambda}(A,J)$ is a graded injective $A$-module. 
We take an injective resolution $J^{\bullet} $ \[
 0 \to A_{\ell} \to J^{0} \to J^{1} \to \cdots \to J^{d} \to 0 
\]
 of $A_{\ell}$ as a $\Lambda$-module 
where $d := \injdim_{\Lambda}A_{\ell}$ is finite by (4-i). 
By the assumption (4-iii), we obtain an injective resolution 
\[
 0 \to\grHom_{\Lambda}(A,A_{\ell})  \to \grHom_{\Lambda}(A,J^{0})  
\to \grHom_{\Lambda}(A, J^{1} )  \to \cdots \to \grHom_{\Lambda}(A,J^{d})  \to 0.  
\]
of $\grHom_{\Lambda}(A, A_{\ell})$.

Next  we prove that $A$ is right hwg. 
By Lemma \ref{201708191905},  it suffices to show that  the canonical morphism $\phi_{\ell}$ is an isomorphism. 
By (4-iii),  it is enough to show that the $0$-th cohomology morphism 
$\tuH^{0}(\phi_{\ell}): A \to \grHom_{\Lambda}(A, A_{\ell})( -\ell)$ is an isomorphism.  
We set $\beta := \tuH^{0}(\phi_{\ell})$. 
For notations simplicity,  we set  $\alpha_{a} := \alpha(a), \beta_{a}:= \beta(a)$ for a homogeneous  element $a \in A_{i}$ of degree $i$. 
We note that $(\grHom_{\Lambda}(A, A_{\ell})( -\ell ))_{i} = \Hom_{\Lambda}(A_{\ell - i}, A_{\ell})$. 
Hence $\alpha_{a}$ and $\beta_{a}$ can be regarded as   elements of $\Hom_{\Lambda}(A_{\ell -i}, A_{\ell})$. 

We claim that 
for  $a \in A_{0}, b \in A_{i}$  
we have an  equality 
$\alpha_{a} \circ \beta_{b} = \alpha_{ab}$ in   $\Hom_{\Lambda}(A_{\ell -i}, A_{\ell})$ 
where $\alpha_{a} \circ \beta_{b}$ denotes the composition 
$A_{\ell -i} \xrightarrow{ \beta_{b}} A_{\ell} \xrightarrow{ \alpha_{a}} A_{\ell}$.  
Indeed, by the definition of the morphism $\phi_{\ell}$, we have
$\beta_{b}(c) = bc$ for  $c \in A_{\ell -i}$. 
It follows that  $(\alpha_{a}\circ \beta_{b})(c) = \alpha_{a}(bc) =(\alpha_{a}\cdot b)(c)$  
where $ - \cdot b$ denotes the right action of $b \in A$ on $\grHom_{\Lambda}(A, A_{\ell})( -\ell)$. 
Therefore we have $\alpha_{a} \circ \beta_{b} = \alpha_{a}\cdot b$ in  $\grHom_{\Lambda}(A, A_{\ell})( -\ell)$. 
On the other hand, since $\alpha$ is a right $A$-module homomorphism, we have $\alpha_{a}\cdot b= \alpha_{ab}$.
Thus we conclude $\alpha_{a} \circ \beta_{b} = \alpha_{ab}$ as desired.

In a similar way, we can prove 
an equality $ \beta_{a}\circ \alpha_{b} = \alpha_{ab}$ for a
for  $a \in A_{0}, b \in A_{i}$. 

Since $\alpha$ gives an isomorphism $A_{0} \cong \Hom_{\Lambda}(A_{\ell}, A_{\ell})$, 
there exists an element $a \in A_{0}$ such that $\alpha_{a} = \id_{A_{\ell}}$. 
It follows that $\id_{A_{\ell}} = \alpha_{a}  = \beta_{a} \circ \alpha_{1_{A}} = \alpha_{1_{A}} \circ  \beta_{a}$. 
This shows that $\alpha_{1_{A}}$ is an isomorphism. 

It follows from the claim that we have $\alpha_{1_{A}} \circ \beta_{b} = \alpha_{b}$ for $ b\in A$. 
Hence, we have $\beta_{b} = \alpha_{1_{A}}^{-1}\circ\alpha_{b}$. 
In other words,  $\beta$ is obtained as the following composition 
\[
\beta: 
A \xrightarrow{ \ \alpha \ } 
\grHom_{\Lambda}(A, A_{\ell})(-\ell)  \xrightarrow{ \ \alpha_{1_{A}}^{-1} \circ - \ } \grHom_{\Lambda}(A, A_{\ell})(- \ell).
\]
This shows that $\beta$ is an isomorphism as desired. 
We finished the proof that $A$ is right hwg. 

The condition (4-i) is right-left symmetric by the definition of cotilting bimodules. 
The condition (4-ii) and (4-iii) are  right-left symmetric by Theorem \ref{cotilting theorem}
 and Remark \ref{remark to main theorem}.     
Hence, we deduce that $\injdim_{A^{\op}}A < \infty $ and $A$ is left hwg. 
\end{proof}

As an example, we show that a hwg self-injective algebra is nothing but a graded Frobenius algebra. 
\begin{example}\label{Example graded Frobenius}
In this Example \ref{Example graded Frobenius}, 
we assume that $\kk$ is a field and $A$ is finite dimensional over $\kk$.
Recall that 
a finite dimensional graded algebra $A = \bigoplus_{i = 0}^{\ell} A_{i}$ 
is called \textit{graded Frobenius} 
if there exists an isomorphism of graded (right) $A$-modules 
\begin{equation}\label{Introduction graded Frobenius}
\tuD(A)(-\ell) \cong A. 
\end{equation}
It is clear that a graded Frobenius algebra is self-injective. 

Observe that 
since we have a canonical isomorphism $\tuD(A) \cong  \grHom_{A_{0}}(A, \tuD(A_{0}))$, 
the defining isomorphism \eqref{Introduction graded Frobenius} is written as 
\begin{equation}\label{Introduction graded Frobenius 2}
\grHom_{A_{0}}(A, \tuD(A_{0}))(-\ell) \cong A. 
\end{equation}
Since $\tuD(A_{0})$ is a cotilting bimodule over $A_{0}$, 
we see that a graded Frobenius algebra is a hwg self-injective algebra. 

On the other hand, 
if $A$ is a hwg self-injective algebra, then  $A_{\ell}$ is a cotilting bimodule of  injective dimension $0$ on both sides by Corollary \ref{201810022155}. 
It follows that $(A_{\ell})_{A_{0}} \cong \tuD(A_{0})_{A_{0}}$. 
Thus we obtain the isomorphism \eqref{Introduction graded Frobenius 2} and hence  the isomorphism \eqref{Introduction graded Frobenius}.
\end{example}

In the case $\ell =1$, 
a graded algebra  $A = A_{0} \oplus A_{1}$ is regarded as the trivial extension algebra $A = \Lambda \oplus C$ of an algebra  $\Lambda = A_{0}$ by a bimodule $C = A_{1}$ over $\Lambda$. 

\begin{corollary}\label{ell = 1 corollary}
Let $\Lambda$ be a Noetherian algebra and  $C$ be a $\Lambda$-$\Lambda$-bimodule 
which is finitely generated on both side. 
Then, the trivial extension algebra $A = \Lambda \oplus C$ with the grading $\deg \Lambda = 0, \deg C = 1$ 
is a hwg IG-algebra 
if and only if $C$ is a cotilting bimodule. 
\end{corollary}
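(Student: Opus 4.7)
The plan is to invoke Theorem~\ref{main theorem} directly, specializing the conditions (4-i)--(4-iii) to the case $\ell=1$, where $A=\Lambda\oplus C$ with $A_0=\Lambda$ and $A_1=C$. The forward implication is then immediate: if $A$ is hwg IG then (4-i) says exactly that $C=A_\ell$ is a cotilting $\Lambda$-bimodule.

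For the converse, assume $C$ is cotilting. I need to verify (4-i)--(4-iii). Condition (4-i) is the hypothesis. For (4-iii), use that, as a graded $\Lambda$-module, $A$ decomposes as $\Lambda\oplus C(-1)$ with $\Lambda$ in degree $0$ and $C$ in degree $1$. Hence
\[
\grExt_\Lambda^{>0}(A,C) \;\cong\; \grExt_\Lambda^{>0}(\Lambda,C)\,\oplus\,\grExt_\Lambda^{>0}(C(-1),C).
\]
The first summand vanishes because $\Lambda$ is projective as a $\Lambda$-module, and the second reduces to $\Ext_\Lambda^{>0}(C,C)$ (in a single internal degree), which vanishes by condition (3) in Definition~\ref{definition of cotilting modules}.

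For (4-ii), I construct the $\Lambda$-$A$-bimodule isomorphism $\alpha\colon A\to\grHom_\Lambda(A,C)(-1)$ by left multiplication: for a homogeneous $m\in A_j$, set $\alpha(m)$ to be the graded $\Lambda$-linear map $A\to C$ of degree $j-1$ defined on $A_{1-j}$ by $a\mapsto ma$ (and zero elsewhere, which is forced by degree). Left $\Lambda$-linearity and right $A$-linearity are routine from associativity. To see $\alpha$ is an isomorphism, check it degree by degree. In degree $0$, $\grHom_\Lambda(A,C)(-1)_0=\grHom_\Lambda(A,C)_{-1}=\Hom_\Lambda(C,C)$, and $\alpha|_{\Lambda}$ is the canonical map $\Lambda\to\Hom_\Lambda(C,C)$, which is an isomorphism by condition (4) of Definition~\ref{definition of cotilting modules}. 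In degree $1$, $\grHom_\Lambda(A,C)(-1)_1=\Hom_{\GrMod\Lambda}(A,C)=\Hom_\Lambda(\Lambda,C)\cong C$, and $\alpha|_{C}$ is the evaluation-at-$1$ isomorphism.

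The only step with any subtlety is the identification of the graded pieces of $\grHom_\Lambda(A,C)(-1)$ and matching the induced map in degree $0$ with the structural algebra homomorphism $\Lambda\to\End_\Lambda(C)$ so that cotilting property (4) can be applied; everything else is formal. With (4-i), (4-ii), (4-iii) all verified, Theorem~\ref{main theorem} yields that $A$ is hwg IG, completing the proof.
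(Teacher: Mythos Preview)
Your proof is correct and follows the same approach as the paper: both directions go through Theorem~\ref{main theorem}, with the forward implication reading off (4-i) and the converse verifying (4-i)--(4-iii). The paper's own proof is a two-line remark that leaves the verification of (4-ii) and (4-iii) to the reader, whereas you have written out the degree-by-degree check of the isomorphism $\alpha$ and the Ext-vanishing explicitly; in particular, your identification of the degree-$0$ component of $\alpha$ with the canonical map $\Lambda\to\Hom_\Lambda(C,C)$ is exactly the point where the cotilting condition enters, and your argument there is clean.
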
 

\begin{proof}
The ``only if" part is a direct consequence of Theorem \ref{main theorem}. 
Conversely, if $C$ is a cotilting bimodule, then it is immediately check the condition (4) of Theorem \ref{main theorem}. 
\end{proof}

If $A = \Lambda \oplus C$ is hwg IG, 
then the dualities $\RHom_{\Lambda}(- ,C)$ and $\grRHom_{A}(-, A)$ are compatible under Happel functor. 

\begin{proposition}\label{compatibility of dualities}
Let $A = \Lambda \oplus C$ be a hwg IG-algebra. 
Then the following diagram is commutative. 
\[
\begin{xymatrix}{ 
\sfD^{\mrb}(\mod \Lambda ) \ar[rr]^{\varpi} \ar[d]_{\RHom_{\Lambda}(-,C)} && \grSing A \ar[d]^{\grRHom_{A}(-, A(1))} \\ 
\sfD^{\mrb}(\mod \Lambda^{\op} ) \ar[rr]_{\varpi}  && \grSing A^{\op} 
}\end{xymatrix}\]
\end{proposition}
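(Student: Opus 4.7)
The plan is to apply $\grRHom_A(M,-)(1)$ to the bimodule isomorphism provided by the hwg IG hypothesis and then recognize the result via tensor-hom adjunction. By Theorem \ref{main theorem} together with Remark \ref{remark to main theorem}, applied with $\ell = 1$, there is an isomorphism
\[
\hat\alpha : A \xrightarrow{\sim} \grRHom_\Lambda(A, C)(-1)
\]
in the derived category of $\Lambda$-$A$-bimodules. Given $M \in \sfD^{\mrb}(\mod \Lambda)$, identify $M$ with its image in $\sfD^{\mrb}(\mod^{[0,0]} A) \subset \sfD^{\mrb}(\grmod A)$, a graded $A$-module concentrated in degree zero with trivial $C$-action.

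Substituting $\hat\alpha$ and using the shift identity $\grRHom_A(-, X(-1))(1) \cong \grRHom_A(-, X)$, then applying the derived tensor-hom adjunction for the subalgebra inclusion $\Lambda \hookrightarrow A$ (coinduction $\grRHom_\Lambda(A, -)$ is right adjoint to restriction of scalars along $\Lambda \hookrightarrow A$), we obtain a chain of natural isomorphisms
\[
\grRHom_A(M, A)(1) \;\cong\; \grRHom_A\bigl(M,\, \grRHom_\Lambda(A, C)\bigr) \;\cong\; \grRHom_\Lambda(M, C).
\]
Since both $M$ and $C$ are concentrated in degree zero as graded $\Lambda$-modules, the rightmost object is itself concentrated in degree zero and agrees with the ordinary $\RHom_\Lambda(M, C)$. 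Viewed as a graded $A^{\op}$-module in degree zero via the embedding $\mod \Lambda^{\op} = \mod^{[0,0]} A^{\op} \hookrightarrow \grmod A^{\op}$ defining the lower $\varpi$, this is precisely $\varpi(\RHom_\Lambda(M, C))$; descent to the Verdier quotient $\grSing A^{\op}$ then gives the asserted commutativity.

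The main subtlety is that $\hat\alpha$ is only a $\Lambda$-$A$-bimodule isomorphism, so the chain above a priori yields only an isomorphism of left $\Lambda$-complexes. To upgrade it to an isomorphism in $\sfD(\GrMod A^{\op})$ one uses that the cotilting hypothesis forces $C$ to be faithful as a right $\Lambda$-module, so the degree-zero component of $\grRHom_A(M, A)$ vanishes (the hwg condition from Proposition \ref{proposition hwg algebra} ensures there is no higher Ext), while its degree-one component carries the trivial left $C$-action since $C \cdot A_1 \subseteq A_2 = 0$. Consequently the natural $A^{\op}$-structure on $\grRHom_A(M, A)(1)$ lies in the essential image of $\mod \Lambda^{\op} \hookrightarrow \grmod A^{\op}$, and the identification with $\RHom_\Lambda(M, C)$ is automatically compatible with the $A^{\op}$-actions, as required.
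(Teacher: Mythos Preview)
Your proof is correct and follows essentially the same route as the paper: substitute the bimodule isomorphism $\hat\alpha$ into $\grRHom_A(M,-)(1)$ and apply the restriction--coinduction adjunction to obtain the chain $\grRHom_A(M,A)(1)\cong\grRHom_A(M,\grRHom_\Lambda(A,C))\cong\RHom_\Lambda(M,C)$. The paper's proof is exactly this two-step chain with no further comment; your final paragraph on $A^{\op}$-compatibility addresses a point the paper leaves implicit, and your resolution (concentration in degree $0$ forces the $A^{\op}$-structure to reduce to the $\Lambda^{\op}$-structure) is correct---though note that the concentration in degree $0$ already follows from the chain itself read as an isomorphism of \emph{graded} left $\Lambda$-complexes, so the separate faithfulness argument is unnecessary.
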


We note that both vertical arrows are equivalence functors. 

\begin{proof}
Let $M \in \sfD^{\mrb}(\mod \Lambda)$. 
First note that $A(1)$ is $0$-hwg object of $\sfD(\GrMod A)$. 
By Lemma \ref{201708191905}(5), we have $\grRHom_{A}(M,A(1))_{ i} = 0$ for $ i \neq 0$. 
Therefore,  we have an isomorphism 
$\grRHom_{A}(M,A(1)) \cong  \RHom_{\GrMod A}(M, A(1))$ of objects of $\sfD(\GrMod A^{\op})$. 
By Corollary \ref{20182242126}, we have 
$\RHom_{\GrMod A}(M, A(1)) \cong \RHom_{\GrMod A}(M(-1), A) \cong  \RHom_{\Lambda}(M, C)$. 
 Combining these isomorphisms, we obtain an isomorphism $\grRHom_{A}(M, A(1)) \cong \RHom_{\Lambda}(M,C)$ 
 as desired. 
\end{proof}

\begin{remark}
 In \cite{adasore},  
for a trivial extension algebra $A = \Lambda \oplus C$, 
we introduced the right  asid number $\alpha_{r}$ and the left asid number $\alpha_{\ell}$ 
which are defined by the following formulas  
 \[
 \alpha_{r}:= \max\{ a \geq  -1 \mid \grRHom_{\Lambda}(\Lambda, A)_{a} \neq 0\} + 1, 
 \alpha_{\ell}:= \max\{ a \geq  -1 \mid \grRHom_{\Lambda^{\op}}(\Lambda, A)_{a} \neq 0\} + 1.
 \]
It is clear that $\alpha_{r} = 0 =\alpha_{\ell}$ if and only if $A$ is hwg. 
\end{remark}

\section{The Happel functor and a homologically well-graded IG-algebra}\label{the Happel functor and a homologically well-graded IG-algebra}

In this Section \ref{the Happel functor and a homologically well-graded IG-algebra} we study a finitely graded IG-algebra from a view point of the Happel functor.

\subsection{When is the Happel  functor $\varpi$ fully faithful?}\label{fully faithful}

By Proposition \ref{201708231410}, 
if $A$ is right hwg, then the Happel  functor $\varpi: \sfD^{\mrb}(\mod^{[0,\ell-1]} A) \to \grSing A$ is fully faithful.
The aim of Section \ref{fully faithful} is to prove the converse under 
the assumption that 
$A$ is  IG and 
that the base ring $\kk$ is  Noetherian and $A= \bigoplus_{i= 0}^{\ell} A_{i}$ is a finitely generated as a $\kk$-module.  
We note that the latter  condition is equivalent to assume that $A_{i}$ is finitely generated over $\kk$ for $i = 0,1, \cdots, \ell$. 
We use the assumption only to establish the following lemma and corollary. 

\begin{lemma}\label{module finite lemma} 
Assume that $\kk$ is Noetherian and  a $\kk$-algebra $\Lambda$ is finitely generated  as a $\kk$-module. 
Let $D$ be a   bimodule over $\Lambda$ which is finitely generated on both sides, $M$  an object of $\sfD^{\mrb}(\mod \Lambda)$ and $n \in \ZZ$.   
If we regard  $\Hom_{\Lambda}(D, M[n])$ as a $\Lambda$-module by using the $\Lambda^{\op}$-module structure of $D$, 
then it is finitely generated.  
\end{lemma}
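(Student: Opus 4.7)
The plan is to reduce the finiteness claim over $\Lambda$ to a finiteness claim over $\kk$. Under the hypothesis that $\Lambda$ is a finitely generated $\kk$-module (and $\kk$ is central), a $\Lambda$-module $X$ is finitely generated over $\Lambda$ if and only if it is finitely generated over $\kk$: one direction is trivial via the structure map $\kk \to \Lambda$, and the converse follows by multiplying $\Lambda$-generators of $X$ by $\kk$-generators of $\Lambda$. Consequently the precise $\Lambda$-action (on either side) is irrelevant for the finite-generation question, and it suffices to show that $\Hom_{\Lambda}(D, M[n])$ is finitely generated as a $\kk$-module.

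First I would observe that the standing hypotheses make $\Lambda$ both left and right Noetherian, since any one-sided ideal of $\Lambda$ is in particular a $\kk$-submodule of the Noetherian $\kk$-module $\Lambda$. Since $D$ is finitely generated as a right $\Lambda$-module, I can choose a bounded-above resolution $P^{\bullet}\to D$ by finitely generated projective right $\Lambda$-modules and use it to compute
\[
\Hom_{\Lambda}(D, M[n]) \;\cong\; \Hom_{\sfD(\Mod\Lambda)}(D, M[n]) \;\cong\; \tuH^{n}\bigl(\Hom^{\bullet}_{\Lambda}(P^{\bullet}, M)\bigr).
\]

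Second, for each pair $(i,j)$ the module $\Hom_{\Lambda}(P^{i}, M^{j})$ is a direct summand of $(M^{j})^{\oplus r_i}$ for some $r_i$, hence is finitely generated over $\Lambda$ and therefore, by the opening reduction, over $\kk$. Since $M$ admits a bounded representative, only finitely many pairs $(i,j)$ contribute to each total degree of $\Hom^{\bullet}_{\Lambda}(P^{\bullet}, M)$, so every term of this complex is finitely generated over $\kk$. Noetherianity of $\kk$ then ensures that each cohomology module is finitely generated over $\kk$, and in particular so is the degree-$n$ cohomology.

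The point I expect to require most care is the interaction between the left $\Lambda$-action on $\Hom_{\Lambda}(D,M[n])$ (coming from the bimodule structure of $D$) and the resolution $P^{\bullet}$, which is only a right $\Lambda$-module resolution. A bimodule projective resolution would make this action manifest on the Hom-complex, but its terms need not be projective as right $\Lambda$-modules without flatness of $\Lambda$ over $\kk$, which is not assumed. The conceptual heart of the argument is that, once finiteness over $\kk$ is secured, the resulting $\Lambda$-action — whatever its precise origin — automatically upgrades to finite generation over $\Lambda$, so no bimodule-resolution argument is required.
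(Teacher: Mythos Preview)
Your argument is correct and follows essentially the same strategy as the paper: compute via a finitely generated projective resolution of $D_{\Lambda}$ to obtain finiteness over $\kk$, then use that $\Lambda$ is module-finite over $\kk$ to upgrade to finiteness over $\Lambda$. The paper additionally passes through an injective resolution of $M$ to make the left $\Lambda$-action visible on the Hom-complex before invoking $\kk$-centrality of $D$ to identify the two $\kk$-structures; your observation that the underlying $\kk$-module structure is unambiguous from the outset (because $D$ is $\kk$-central) lets you bypass that zigzag and is a mild streamlining of the same idea.
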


\begin{proof}
By standard argument, the problem is reduced to show that $\Ext_{\Lambda}^{n}(D, M)$ belongs to $\mod \Lambda$ for 
$n \in \ZZ$ and $M \in \mod \Lambda$. 
Taking a projective resolution $P^{\bullet}$ of $D$   as a $\Lambda$-module such that $P^{i}$ is finitely generated, 
we see that $\Ext_{\Lambda}^{n}(D, M) = \tuH^{n}(\Hom_{\Lambda}(P^{\bullet}, M))$ is finitely generated over $\kk$ with respect to the $\kk$-module structure 
induced from the $\Lambda$-module structure of $D$. 
Let $I^{\bullet}$ be an injective resolution of $M$. Then we have the following quasi-isomorphisms 
\[
\Hom_{\Lambda}(P^{\bullet}, M) \xrightarrow{ \sim} \Hom_{\Lambda}(P^{\bullet}, I^{\bullet}) \xleftarrow{\sim} \Hom_{\Lambda}(D, I^{\bullet}).
\]
of complexes of $\kk$-modules. 
Therefore, $\tuH^{n}(\Hom_{\Lambda}(D, I^{\bullet}))$ is finitely generated with respect to the $\kk$-module structure 
induced from the $\Lambda$-module structure of $D$. 
Since a bimodule $D$ is assumed to be $\kk$-central, 
$\tuH^{n}(\Hom_{\Lambda}(D, I^{\bullet}))$ is finitely generated with respect to the $\kk$-module structure 
induced from the $\Lambda^{\op}$-module structure of $D$. 
Thus we conclude that $\Ext_{\Lambda}^{n}(D, M)$ is finitely generated as a $\Lambda$-module. 
\end{proof}

\begin{corollary}\label{202003121955}
Let $A = \bigoplus_{i = 0}^{\ell}A_{i}$ be a finitely graded Noetherian algebra. 
Assume that $\kk$ is Noetherian and $A$ is a  finitely generated $\kk$-module.
Let  $M$ be an object of $\sfD^{\mrb}(\mod \Lambda)$. 
Then the object $\grRHom_{\Lambda}(A, M) \in \sfD(\GrMod A)$ belongs to $\sfD(\grmod A)$.
Moreover, the object  $\grRHom_{\Lambda}(A, M) \in \sfD(\GrMod A)$ is of bounded cohomology 
if and only if so are $\RHom_{\Lambda}(A_{i}, M) \in \sfD(\mod \Lambda)$ for all $i = 0, 1, \cdots, \ell$. 
\end{corollary}

The following is the main result of Section \ref{fully faithful}.

\begin{theorem}\label{main theorem 2}
Let $A = \bigoplus_{i = 0}^{\ell}A_{i}$ be a finitely graded IG-algebra. 
Assume that $\kk$ is Noetherian and $A$ is a  finitely generated $\kk$-module.
 Then the following conditions are  equivalent. 

\begin{enumerate}[(1)]
\item $A$ is a hwg algebra. 

\item The functor $\varpi $  is fully faithful. 

\item $\Ker \varpi = 0$.  

\item
We have $\Hom_{\grSing A}(\varpi(T), \varpi(T) [n]) = 0$ for $n \neq 0$.  
The algebra homomorphism  $\gamma: \nabla A  \to \End_{\grSing A} (\varpi T) 
$ induced from the functor $\varpi$ is an isomorphism. 
\end{enumerate}
\end{theorem}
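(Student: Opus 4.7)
The plan is to prove the cycle (1) $\Rightarrow$ (2) $\Rightarrow$ (3), (2) $\Rightarrow$ (4), and close it with both (3) $\Rightarrow$ (1) and (4) $\Rightarrow$ (1). The easy directions are essentially formal: (1) $\Rightarrow$ (2) follows from Proposition \ref{201708231410} since hwg entails right hwg; (2) $\Rightarrow$ (3) is automatic, as a fully faithful functor cannot kill an object (its identity would otherwise map to zero); and for (2) $\Rightarrow$ (4), since $T$ is a projective generator of $\mod^{[0,\ell-1]} A$ with $\End T = \nabla A$, the identities $\End(T) = \nabla A$ and $\Hom(T, T[n]) = 0$ for $n \neq 0$ transfer into $\grSing A$ under the fully faithful $\varpi$.

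For the essential implications (4) $\Rightarrow$ (1) and (3) $\Rightarrow$ (1), I would first reduce to the case $\ell = 1$ using the quasi-Veronese construction of Section \ref{Quasi-Veronese algebra construction}. Lemma \ref{201708232231} transports hwg between $A$ and $A^{[\ell]}$, the equivalence $\mathsf{qv}$ preserves both the IG property and finite $\kk$-module-generation, and diagram \eqref{qv diagram} together with $\sfq$ identifies the two Happel functors; consequently conditions (3) and (4) also transfer. In the reduced case one has $A = \Lambda \oplus C$ with $T = \Lambda$, $\nabla A = \Lambda$, and by Corollary \ref{ell = 1 corollary} the goal becomes to show that $C$ is a cotilting $\Lambda$-bimodule.

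To extract the cotilting conditions from (4), I would take a graded injective resolution $I$ of $\Lambda \in \grmod A$ and apply the decomposition machinery of Section \ref{Homologically well-graded algebras}. Using Lemma \ref{201708191905} and Corollary \ref{20182242126} one can express $\End_{\grSing A}(\varpi \Lambda)$ and $\Hom_{\grSing A}(\varpi \Lambda, \varpi \Lambda[n])$ in terms of $\Hom_\Lambda$ and $\Ext_\Lambda^{\ast}$-groups involving $C$. Condition (4) then yields $\End_\Lambda(C) \cong \Lambda$ and $\Ext_\Lambda^{>0}(C, C) = 0$; the symmetric analysis on $A^{\op}$ provides the left-sided cotilting conditions; the IG hypothesis, via Lemma \ref{derived interpretation of Iwanaga lemma}, bounds the one-sided $\Lambda$-injective dimensions of $C$; and Lemma \ref{module finite lemma} — which is exactly where the Noetherian hypothesis on $\kk$ and the finite-generation hypothesis on $A$ enter — guarantees that the bimodule Hom and Ext groups remain finitely generated. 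The implication (3) $\Rightarrow$ (1) is handled by the same machinery, with $\ker \varpi = 0$ forcing the required Hom-vanishing on $\varpi \Lambda$ through the same decomposition.

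The hard part will be this final translation step in the $\ell = 1$ case: decoding the singular Hom-vanishing on $\varpi \Lambda$ into the precise list of cotilting conditions on $C$ requires tracking carefully through the injective decomposition of $\Lambda \in \grmod A$ and identifying exactly which piece of the decomposition contributes to which $\Ext$-group. Without the Section \ref{Homologically well-graded algebras} machinery (itself drawn from \cite{adasore}), this translation would be combinatorially unmanageable, especially in the absence of Auslander–Reiten triangles for $\grSing A$ over a general Noetherian base.
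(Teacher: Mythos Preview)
Your handling of the easy implications and of the reduction to $\ell=1$ via the quasi-Veronese construction is correct and matches the paper. The gaps lie in the two hard implications.

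For (4) $\Rightarrow$ (1): Lemma \ref{201708191905} and Corollary \ref{20182242126} compute $\RHom$ in $\sfD(\GrMod A)$, not morphisms in $\grSing A$; there is no mechanism in those results for expressing $\End_{\grSing A}(\varpi \Lambda)$ or $\Hom_{\grSing A}(\varpi \Lambda,\varpi \Lambda[n])$ as $\Ext^{*}_{\Lambda}(C,C)$, so the translation step you describe is unsupported. Moreover, condition (4) is one-sided and gives you nothing about $A^{\op}$, so the ``symmetric analysis on $A^{\op}$'' is unavailable. The paper does not attempt this route: it proves (4) $\Rightarrow$ (3) instead, observing that (4) makes $\varpi|_{\thick \Lambda}\colon \thick \Lambda \to \thick \varpi(\Lambda)$ an equivalence, and then invoking \cite[Theorem 4.18]{adasore}, which says $\Ker \varpi \subset \sfK^{\mrb}(\proj \Lambda)=\thick \Lambda$, to conclude $\Ker\varpi=0$.

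For (3) $\Rightarrow$ (1): you decompose an injective resolution of $\Lambda$, but right hwg is a condition on the injective resolution of $A$. The paper takes a graded injective resolution $I$ of $A$ and looks at $\frks_{\leq 0} I$. Using Lemma \ref{module finite lemma} and Lemma \ref{finite cohomology lemma} one shows $\frks_{\leq 0} I\in\sfD^{\mrb}(\grmod A)$; the exact triangle $\frks_{\leq 0}I\to A\to \grRHom_{\Lambda}(A,C)(-1)\to$ together with a degree-$1$ check places it in $\sfD^{\mrb}(\mod^{0}A)=\sfD^{\mrb}(\mod\Lambda)$; and since $\frks_{\leq 0}I$ is a bounded complex of graded injectives, Lemma \ref{derived interpretation of Iwanaga lemma} puts it in $\sfK^{\mrb}(\grproj A)$. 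Hence $\frks_{\leq 0}I\in\Ker\varpi=0$, so $I\cong\frks_{1}I$ and $A$ is right hwg by Lemma \ref{201708191905}; Theorem \ref{main theorem} then upgrades right hwg IG to hwg. This specific construction is the missing idea in your plan.
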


We remark that the algebra homomorphism $\gamma$ in Theorem \ref{main theorem 2} coincides with 
the algebra homomorphism $\gamma$ of  \eqref{Introduction morphism gamma} 
if we identify the functor $\cH$ with $\varpi$ via the equivalence $\beta: \stabgrCM A \cong \grSing A$. 

We need a preparation.

\begin{lemma}\label{finite cohomology lemma}
Assume that $\kk$ is Noetherian. 
Let $\Lambda$ be an algebra which is finitely generated  as a $\kk$-module, 
$C$  a   bimodule over $\Lambda$ which is finitely generated on both sides 
and $A = \Lambda \oplus C$ the trivial extension algebra with the grading $\deg \Lambda := 0, \ \deg C: =1$. 
Let $I \in \sfC^{\mrb}(\grInj A)$ such that (the quasi-isomorphism class of) it belongs to $\sfD^{\mrb}(\grmod A)$.  
Then, the following assertions hold. 
\begin{enumerate}[(1)]
\item  The complex $\frki_{i} I$ belongs to $\sfD^{\mrb}(\mod \Lambda)$ for $i \in \ZZ$. 
 \item The complex $\frks_{\geq i} I$   belongs to $\sfD^{\mrb}(\grmod A)$ for $i \in \ZZ$.  
 \item The complex $\frks_{< i}I$  belongs to $\sfD^{\mrb}(\grmod A)$ for $i \in \ZZ$.  
 \end{enumerate}
\end{lemma}

\begin{proof}
We may assume that $I \neq 0 $ in $\sfD(\GrMod A)$. 
Since $I$ belongs to $\sfD^{\mrb}(\grmod A)$, 
the subset $\{i \in \ZZ \mid I_{i} \neq 0  \textup{ in } \sfD(\Mod \Lambda) \}$ of $\ZZ$ is non-empty and bounded.  
We set $j := \max\{i \in \ZZ \mid I_{i} \neq 0  \textup{ in } \sfD(\Mod \Lambda) \}$.

(1) 
By Lemma \ref{20170819245}(1),
$\frki_{i}I = 0$ for $i > j$. 
It is only remained to prove  that $\frki_{i} I$ belongs to $\sfD^{\mrb}(\mod \Lambda)$ for $i\leq j$.
We check this by descending induction on $i$. 

First, we deal with the case where $i=j$. 
Since $\frki_{j} I$ is isomorphic to  $I_{j}$ in $\sfD(\Mod \Lambda)$  by Lemma \ref{20170819245}(3),   
it belongs to  in $\sfD^{\mrb}(\mod \Lambda)$.

Next, let $i \leq j$. We assume that $\frki_{i} I$ belongs to $\sfD^{\mrb}(\mod \Lambda)$. 
Since $\frki_{i} I$ is a bounded complex of injective modules,  
using Lemma \ref{module finite lemma}, we see that $\RHom_{\Lambda}(C, \frki_{i} I)$ belongs to $\sfD^{\mrb}(\mod \Lambda)$.  
Using an exact triangle 
$\frki_{i-1}  I \to I_{i-1} \to \RHom_{\Lambda}(C, \frki_{i} I) \to $ in $\sfD^{\mrb}(\mod \Lambda)$ obtained in \cite[Lemma 5.1]{adasore},  we deduce that $\frki_{i -1} I$ belongs to $\sfD^{\mrb}(\mod \Lambda)$.

(2) 
By Lemma \ref{20170819245}(2),  
we have $\frks_{\geq i} I = 0$  in $\sfD^{\mrb}(\grmod A)$ for $i > j$. 
It is only remained to prove  that $\frks_{\geq i} I$ belongs to $\sfD^{\mrb}(\grmod A)$ for $i \leq j $.
We check this by descending induction on $i$. 

We note that it follows from (1) and Corollary \ref{202003121955}  that 
the object $\frks_{i} I = \grRHom_{\Lambda}(A, \frki_{i}I)( -i)$ belongs to $\sfD^{\mrb}(\grmod A)$ for $i \in \ZZ$.

First, we deal with the case where $i=j$. 
Since $\frks_{\geq j} I$ is isomorphic to  $\frks_{j} I$ in $\sfD(\GrMod A)$  by Lemma \ref{20170819245}(2),   
it belongs to  in $\sfD^{\mrb}(\grmod A)$.
 
 Next, let $i\leq j$. We assume that $\frks_{\geq i} I$ belongs to $\sfD^{\mrb}(\grmod A)$. 
 Observe that $\frks_{ \leq i -1} (\frks_{ \geq i -1 } I) = \frks_{ i -1} I$. 
 Replacing $i$ and $I$  with $i -1$ and $\frks_{\geq i-1 } I$ in the exact triangle \eqref{exact triangle}, 
 we obtain an exact triangle $\frks_{i-1} I \to \frks_{  \geq i-1} I \to \frks_{\geq i } I \to$.  
Using this exact triangle and the induction hypothesis,  
we see that $\frks_{\geq i -1 } I$ belongs to $\sfD^{\mrb}(\grmod A)$. 

(3) is proved from the assumption on $I$ and (2) by using the exact triangle \eqref{exact triangle}. 
\end{proof}

We proceed a proof of Theorem \ref{main theorem 2}. 

\begin{proof}[Proof of Theorem \ref{main theorem 2}]
By Lemma \ref{201708232231} and  the diagram \eqref{qv diagram}, we may assume that $A = \Lambda \oplus C$. 
The implications (1) $\Rightarrow$ (2) follows from Proposition \ref{201708231410}. 
The implication (2) $\Rightarrow$ (3) is clear. 

We prove (3) $\Rightarrow$ (1). 
Let $I$ be an injective resolution of $A$. We claim that $\frks_{\leq 0}I$ belongs to $\sfD^{\mrb}(\mod^{0} A)
= \sfD^{\mrb}(\mod \Lambda)$. 
Indeed, 
by Lemma \ref{finite cohomology lemma}, $\frks_{\leq 0} I$ belongs to $\sfD^{\mrb}(\mod^{\leq 0} A)$. 
By Lemma \ref{20170819245} we have an isomorphism $\frks_{> 0} I \cong \frks_{1} I$ in $\sfD(\GrMod A)$. 
Therefore $I\cong A$ and $\frks_{> 0} I$ belong to $\sfD(\mod^{[0,1]}A)$. 
It follows from the exact triangle  
$\frks_{\leq 0} I \to I \xrightarrow{f} \frks_{ > 0} I \to $ of  \eqref{exact triangle} 
that $\frks_{\leq 0} I$ belongs to $\sfD^{\mrb}(\mod^{[0,1]} A)$. 
Hence we conclude that $\frks_{\leq 0} I$ belongs to 
$\sfD^{\mrb}(\mod^{\leq 0} A) \cap \sfD^{\mrb}(\mod^{[0,1]}A) = \sfD^{\mrb}(\mod^{0}A)$ 
as desired. 

By 
Lemma \ref{derived interpretation of Iwanaga lemma},  
 $\frks_{\leq 0} I$ belongs to $\sfK^{\mrb}(\grproj A)$. 
Therefore  $\varpi(\frks_{\leq 0}I) = 0$. 
It follows from the assumption $\Ker \varpi = 0$  that $\frks_{\leq 0} I = 0$.  
Thus, $I \cong \frks_{1} I$ and $A$ is right hwg by Lemma \ref{201708191905}.  
This finishes the proof of the implication (3) $\Rightarrow$ (1). 

The implication (2) $\Rightarrow$ (4) is clear. 
We prove the implication (4) $\Rightarrow$ (3). 
It follows from the assumption that the restriction of $\varpi$ gives an equivalence 
\[\varpi|_{\sfK^{\mrb}(\proj \Lambda)} : \sfK^{\mrb}(\proj \Lambda) = \thick \Lambda \xrightarrow{\sim} \thick \varpi \Lambda.
\]
In particular, we have $(\Ker \varpi) \cap \sfK^{\mrb}(\proj \Lambda) = \Ker( \varpi|_{\sfK^{\mrb}(\proj \Lambda)}) = 0$. 
On the other hand, by \cite[Theorem 4.17]{adasore}, we have $\Ker \varpi \subset \sfK^{\mrb}(\proj \Lambda)$. 
Therefore,  we conclude $\Ker \varpi = 0$.  
\end{proof}

\subsection{When does the Happel  functor $\varpi$ give an equivalence?}\label{equivalence?}

In Section \ref{equivalence?}, we discuss when the functor $\varpi$ gives an equivalence.

\subsubsection{The condition (F)} 

First we introduce a finiteness condition on homological dimensions. 

\begin{definition}\label{definition: finite local dimension}
An algebra $\Lambda$ is said to satisfy the condition (PF) (resp. (IF) ) 
if all finitely generated $\Lambda$-module $M$ satisfies 
$\pd M < \infty$ (resp. $\injdim M < \infty$).  

An algebra $\Lambda$ is said to satisfy the condition (F) if it satisfies both the conditions (PF) and (IF). 
\end{definition}

It is clear that if $\gldim \Lambda < \infty$, then $\Lambda$ satisfies the condition (F). 
In some  cases,  the converse holds. 
In the case where $\Lambda$ is a finite dimensional algebra, $\Lambda$ satisfies the condition (F) if and only if it is of finite global dimension. 
More generally, in the case where $\kk$ is a complete local Noetherian  ring and $\Lambda$ is finitely generated as $\kk$-module, 
then $\Lambda$ is a semi-perfect Noetherian algebra (see e.g. \cite[Proposition 6.5 and Theorem 6.7]{Curtis-Reiner}). 
It follows that the condition (F) implies $\gldim \Lambda < \infty$. 

We collect  basic properties of the condition (F). 

\begin{lemma}\label{lemma: finite local dimension}
Assume that  $\Lambda$ is Noetherian. 
Then, the following assertions hold. 

\begin{enumerate}[(1)]
\item 
$\Lambda$ satisfies the condition (PF) (resp. (IF))  
if and only if $\sfK^{\mrb}(\proj \Lambda) = \sfD^{\mrb}(\mod \Lambda)$ 
(resp. $\sfD^{\mrb}(\mod \Lambda) \subset \sfK^{\mrb}(\Inj \Lambda)$). 
 
\item 
$\Lambda$ satisfies the condition (F) 
if and only if $\sfK^{\mrb}(\proj \Lambda) = \sfD^{\mrb}(\mod \Lambda) \subset \sfK^{\mrb}(\Inj \Lambda)$. 

\item 
The following conditions are equivalent. 

\begin{enumerate}[(a)]
\item $\Lambda$ and $\Lambda^{\op}$ satisfy the condition (F). 

\item $\Lambda$ is IG and satisfies the condition (F). 

\item $\Lambda$ satisfies the condition (F) and has a cotilting bimodule $C$. 

\item $\Lambda$ and $\Lambda^{\op}$ satisfy the condition (IF). 
\end{enumerate}
\end{enumerate}
\end{lemma}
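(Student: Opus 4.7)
For part (1), I would use the standard fact that since $\Lambda$ is Noetherian, every finitely generated $\Lambda$-module $M$ admits a projective resolution by finitely generated projectives; such a resolution is bounded precisely when $\pd M < \infty$, so $M \in \sfK^{\mrb}(\proj \Lambda)$ iff $\pd M < \infty$. Bootstrapping to $\sfD^{\mrb}(\mod \Lambda)$ via brutal truncation yields the (PF) equivalence. The (IF) statement follows by the dual argument using injective resolutions; because injective modules need not be finitely generated, one only obtains the inclusion $\sfD^{\mrb}(\mod \Lambda) \subset \sfK^{\mrb}(\Inj \Lambda)$ rather than an equality. Part (2) is the conjunction of the two halves of (1).

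For part (3), the plan is the cycle (a)$\Rightarrow$(b)$\Rightarrow$(c)$\Rightarrow$(d)$\Rightarrow$(a). The implication (a)$\Rightarrow$(b) is immediate: (F) on both $\Lambda$ and $\Lambda^{\op}$ contains (IF), which applied to the regular module forces $\injdim_{\Lambda}\Lambda < \infty$ and $\injdim_{\Lambda^{\op}}\Lambda < \infty$, i.e., $\Lambda$ is IG. For (b)$\Rightarrow$(c) one takes $C = \Lambda$, which is a cotilting bimodule over itself when $\Lambda$ is IG, as noted right after Definition \ref{definition of cotilting modules}. For (d)$\Rightarrow$(a), condition (d) likewise forces $\Lambda$ to be IG, so the ungraded analogue of Lemma \ref{derived interpretation of Iwanaga lemma} gives $\sfK^{\mrb}(\proj \Lambda) = \sfD^{\mrb}(\mod \Lambda) \cap \sfK^{\mrb}(\Inj \Lambda)$; combined with (IF) rewritten via part (1) as $\sfD^{\mrb}(\mod \Lambda) \subset \sfK^{\mrb}(\Inj \Lambda)$, this yields $\sfD^{\mrb}(\mod \Lambda) = \sfK^{\mrb}(\proj \Lambda)$, which is (PF); the symmetric argument for $\Lambda^{\op}$ completes (a).

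The main step will be (c)$\Rightarrow$(d). Since (F) on $\Lambda$ already contains (IF) on $\Lambda$, only (IF) on $\Lambda^{\op}$ requires proof. Given a finitely generated left $\Lambda$-module $N$, Miyachi's duality (Theorem \ref{cotilting theorem}) places $\RHom_{\Lambda^{\op}}(N, C)$ in $\sfD^{\mrb}(\mod \Lambda)$, and by part (2) applied under the assumed (F) on $\Lambda$, this object lies in $\sfK^{\mrb}(\proj \Lambda)$; I would then choose a bounded resolution $P^{\bullet}$ of $\RHom_{\Lambda^{\op}}(N, C)$ by finitely generated projective right $\Lambda$-modules. Applying the inverse cotilting duality $\RHom_{\Lambda}(-, C)$ yields $N \cong \Hom_{\Lambda}(P^{\bullet}, C)$ in $\sfD^{\mrb}(\mod \Lambda^{\op})$, and each term $\Hom_{\Lambda}(P^{i}, C)$ is a direct summand of $C^{n}$ as a left $\Lambda$-module, hence has injective dimension at most $\injdim_{\Lambda^{\op}} C < \infty$. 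A bounded complex of left modules of uniformly bounded injective dimension has finite injective dimension, so $\injdim_{\Lambda^{\op}} N < \infty$. The principal subtlety is verifying that the cotilting duality really sends the perfect subcategory on one side into the finite-injective-dimension subcategory on the other, and the explicit projective resolution above is what makes this transparent.
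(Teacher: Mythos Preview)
Your proof is correct and follows essentially the same approach as the paper: parts (1) and (2) are declared routine (the paper likewise leaves them to the reader), and for part (3) both arguments hinge on Miyachi's cotilting duality (Theorem~\ref{cotilting theorem}) to transport finiteness from $\Lambda$ to $\Lambda^{\op}$. The only cosmetic difference is the implication scheme: the paper proves $(c)\Rightarrow(a)$ directly (bounding $\Ext^{n}_{\Lambda^{\op}}(L,N)$ via the duality and a degree count on a bounded projective resolution of $F(N)$, and remarking that $\pd N<\infty$ is checked analogously), whereas you route through $(c)\Rightarrow(d)\Rightarrow(a)$, extracting $\injdim_{\Lambda^{\op}}N<\infty$ from the explicit complex $\Hom_{\Lambda}(P^{\bullet},C)$ and then recovering (PF) on both sides from the Iwanaga-type identity $\sfK^{\mrb}(\proj)=\sfD^{\mrb}(\mod)\cap\sfK^{\mrb}(\Inj)$ once $\Lambda$ is IG; this is a harmless rearrangement of the same content.
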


\begin{proof}
We leave the proofs of (1), (2) to the readers. 

We prove (3). 
The implication $(a) \Rightarrow (b)$ is clear. 
The implication $(b) \Rightarrow (c)$ is proved by setting $C = \Lambda$.

$(c) \Rightarrow (d)$. 
It is enough to show that $\Lambda^{\op}$ satisfies the condition (IF). 
By Theorem \ref{cotilting theorem}, the functor $F: = \RHom_{\Lambda}(-, C): \sfD^{\mrb}(\mod \Lambda)^{\op} \to \sfD^{\mrb}(\mod \Lambda^{\op})$ is an equivalence. 
On the other hand, by (2), we have $\sfD^{\mrb}(\mod \Lambda) = \sfK^{\mrb}(\proj \Lambda) = \thick \Lambda$. 
It follows from an isomorphism $F(\Lambda) \cong C_{\Lambda^{\op}}$ that 
$\sfD^{\mrb}(\mod \Lambda^{\op}) = \thick C_{\Lambda^{\op}}$.  
By the definition of a cotilting module, we have $\injdim C_{\Lambda^{\op}} < \infty$. In other words, $C \in \sfK^{\mrb}(\Inj \Lambda^{\op})$. 
Thus we conclude 
$\sfD(\mod \Lambda^{\op}) = \thick C_{\Lambda^{\op}} \subset \sfK^{\mrb}(\Inj \Lambda^{\op})$. 

$(d) \Rightarrow (a)$. 
It follows from (1) that $\sfD^{\mrb}(\mod \Lambda) \subset \sfK^{\mrb}(\Inj \Lambda)$.  
It is easy to see that $\Lambda$ is IG. 
Applying ungraded version of Lemma \ref{derived interpretation of Iwanaga lemma},  we have 
$\sfK^{\mrb}(\proj \Lambda) = \sfD^{\mrb}(\mod \Lambda) \cap \sfK^{\mrb}(\Inj \Lambda) = \sfD^{\mrb}(\mod \Lambda)$. 
By (1), $\Lambda$ satisfies the condition (PF). 

In the same way, we can show that $\Lambda^{\op}$ satisfies the condition (PF).   
\end{proof}

%

To use the quasi-Veronese algebra construction, we need the following lemma. 
Since it is easily proved by using \cite[Proposition 6.1]{adasore}, we leave the proof to the readers. 

\begin{lemma}
Let $\Gamma_{1}, \Gamma_{2}$ be Noetherian algebras and $E$ be a $\Gamma_{1}$-$\Gamma_{2}$-bimodule 
which is finitely generated on both sides. Then the upper triangular matrix algebra $\Gamma = \begin{pmatrix} \Gamma_{1} & E \\ 0 & \Gamma_{2} \end{pmatrix}$ satisfies the condition (PF) (resp. (IF), (F))  if and only if so do $\Gamma_{1}$ and $\Gamma_{2}$. 
\end{lemma}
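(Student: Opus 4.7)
The plan is to exploit the standard equivalence between $\mod \Gamma$ and the category of triples $(M_1, M_2, \phi)$, where $M_i \in \mod \Gamma_i$ and $\phi : M_1 \otimes_{\Gamma_1} E \to M_2$ is a right $\Gamma_2$-linear map; finite generation over $\Gamma$ corresponds to finite generation of both $M_1$ and $M_2$. The content of the cited Proposition 6.1 of \cite{adasore} provides the fundamental short exact sequence
\[
0 \to (0, M_2, 0) \to M \to (M_1, 0, 0) \to 0 \qquad \text{in } \mod \Gamma,
\]
where $(0,M_2,0)$ and $(M_1,0,0)$ are the pushforwards along the quotients $\Gamma \twoheadrightarrow \Gamma/\Gamma e_1 \Gamma \cong \Gamma_2$ and $\Gamma \twoheadrightarrow \Gamma/\Gamma e_2 \Gamma \cong \Gamma_1$. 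The strategy is to reduce each of (PF), (IF), (F) for arbitrary $M \in \mod \Gamma$ via this sequence to the concentrated modules, then relate those to modules over $\Gamma_1, \Gamma_2$ via the exact restriction $(-)e_i : \mod \Gamma \to \mod \Gamma_i$ and its adjoints $\Ind_i = (-) \otimes_{\Gamma_i} e_i \Gamma$ and $\operatorname{coInd}_i = \Hom_{\Gamma_i}(\Gamma e_i, -)$.

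For (PF) the key identity is $\pd_\Gamma(0, N, 0) = \pd_{\Gamma_2} N$: since $\Hom_\Gamma(e_1 \Gamma, (0, N, 0)) = (0, N, 0)e_1 = 0$, every projective resolution of $(0, N, 0)$ lives in $\add\{e_2 \Gamma\}$ and matches a projective resolution of $N$ over $\Gamma_2 = e_2 \Gamma e_2$. Next, using the auxiliary short exact sequence $0 \to (0, M_1 \otimes_{\Gamma_1} E, 0) \to \Ind_1 M_1 \to (M_1, 0, 0) \to 0$ (with $\Ind_1 M_1 = (M_1, M_1 \otimes_{\Gamma_1} E, \mathrm{mult})$ sending projectives to projectives, so $\pd_\Gamma \Ind_1 M_1 \le \pd_{\Gamma_1} M_1$) together with the previous identity, one obtains $\pd_\Gamma (M_1, 0, 0) \le \max\{\pd_{\Gamma_1} M_1,\ \pd_{\Gamma_2}(M_1 \otimes_{\Gamma_1} E) + 1\}$; the finite generation of $E$ on both sides guarantees $M_1 \otimes_{\Gamma_1} E \in \mod \Gamma_2$. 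The fundamental SES then yields the ``if'' direction of (PF). Conversely, the functor $(-)e_i$ is exact and sends $\Gamma$-projectives to $\Gamma_i$-projectives ($e_j \Gamma \cdot e_1 \in \{\Gamma_1, 0\}$), so $\pd_{\Gamma_1} M e_1 \le \pd_\Gamma M$; the case of $\Gamma_2$ follows from $\pd_{\Gamma_2} N = \pd_\Gamma (0, N, 0)$ applied to arbitrary f.g.\ $N$.

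For (IF) the argument is parallel and dual. The ``only if'' direction uses that $(-)e_2$ preserves $\Gamma$-injectives (being right adjoint to the exact $\Ind_2$, which is exact because $e_2 \Gamma = \Gamma_2$ is free as a left $\Gamma_2$-module), so applying it to an injective resolution of $(0, N, 0)$ yields $\injdim_{\Gamma_2} N \le \injdim_\Gamma (0, N, 0)$; for $\Gamma_1$ one uses the Ext-adjunction $\Ext^n_\Gamma((K, 0, 0), (M_1, 0, 0)) \cong \Ext^n_{\Gamma_1}(K, M_1)$, valid because $\operatorname{coInd}_1 K = (K, 0, 0)$ is exact in $K$ with left adjoint $(-)e_1$ also exact. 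For the ``if'' direction, $\injdim_\Gamma (M_1, 0, 0) \le \injdim_{\Gamma_1} M_1$ via the exact $\operatorname{coInd}_1$, while $\injdim_\Gamma (0, M_2, 0)$ is bounded using the short exact sequence $0 \to (0, M_2, 0) \to \operatorname{coInd}_2 M_2 \to \operatorname{coInd}_1 \Hom_{\Gamma_2}(E, M_2) \to 0$, interpreted in derived form $0 \to (0, M_2, 0) \to R\operatorname{coInd}_2 M_2 \to R\operatorname{coInd}_1 R\Hom_{\Gamma_2}(E, M_2) \to 0$ when necessary. Finally, (F) is the conjunction of (PF) and (IF) by definition.

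The main obstacle lies in the ``if'' direction of (IF): $\operatorname{coInd}_2 = \Hom_{\Gamma_2}(\Gamma e_2, -)$ is exact only when $\Gamma e_2 = E \oplus \Gamma_2$ is projective as a right $\Gamma_2$-module, i.e.\ when $E$ is $\Gamma_2$-projective, and $\Hom_{\Gamma_2}(E, M_2)$ can fail to be finitely generated over $\Gamma_1$. One circumvents this by passing to the derived coinduction $R\operatorname{coInd}_2 M_2 = R\Hom_{\Gamma_2}(\Gamma e_2, M_2)$ and checking, via a truncation argument on an injective resolution of $M_2$ over $\Gamma_2$ (existing of finite length by (IF) of $\Gamma_2$) and on an appropriate Ext-spectral sequence, that the resulting object lies in $\sfK^{\mrb}(\Inj \Gamma)$. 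The finiteness of the individual Ext-layers $\Ext^i_{\Gamma_2}(E, J^j)$ as $\Gamma_1$-modules (once (IF) for $\Gamma_1$ is invoked on these controlled cohomology modules) then propagates via the SES to force $\injdim_\Gamma (0, M_2, 0) < \infty$, completing the reduction.
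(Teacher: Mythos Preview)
The paper itself omits the proof, referring only to \cite[Proposition~6.1]{adasore}, so there is no argument to compare with; your triple description and the fundamental short exact sequence are the natural tools. Two points in your write-up are not justified, however.

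\textbf{(PF), ``if'' direction.} You write ``$\Ind_1$ sends projectives to projectives, so $\pd_\Gamma \Ind_1 M_1 \le \pd_{\Gamma_1} M_1$''. This inference needs $\Ind_1=-\otimes_{\Gamma_1}e_1\Gamma$ to be \emph{exact}, i.e.\ $E$ flat as a left $\Gamma_1$-module, which is not assumed. A clean repair: from the exact sequence $0\to(0,E,0)\to e_1\Gamma\to(\Gamma_1,0,0)\to 0$ and (PF) for $\Gamma_2$ you get $\pd_\Gamma(\Gamma_1,0,0)<\infty$; then a finite $\Gamma_1$-projective resolution of $M_1$, viewed through the exact restriction along $\Gamma\twoheadrightarrow\Gamma_1$, exhibits $(M_1,0,0)$ as an iterated extension of objects in $\add(\Gamma_1,0,0)$, forcing $\pd_\Gamma(M_1,0,0)<\infty$.

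\textbf{(IF), ``if'' direction.} You correctly isolate the obstacle, but the proposed fix does not close it. First, ``$\Ext^i_{\Gamma_2}(E,J^j)$'' vanishes for $i>0$ since $J^j$ is injective, so those are not the layers to control. What actually appears is $R\Hom_{\Gamma_2}(E,M_2)$, a bounded complex of $\Gamma_1$-modules with cohomology $\Ext^i_{\Gamma_2}(E,M_2)$; to invoke (IF) for $\Gamma_1$ on it you need these cohomology groups to lie in $\mod\Gamma_1$. Nothing in the hypotheses of the lemma guarantees this: (IF) concerns injective dimension, not finite generation, and the bimodule conditions on $E$ do not by themselves force $\Ext^i_{\Gamma_2}(E,M_2)$ to be finitely generated over $\Gamma_1$. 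Note that in every application in the paper the standing hypothesis ``$\kk$ Noetherian and the algebra module-finite over $\kk$'' is in force, and then the required finiteness is exactly Lemma~\ref{module finite lemma}. Either add that hypothesis, or supply an argument bounding $\injdim_\Gamma(0,M_2,0)$ that does not pass through finite generation of $\Ext^\bullet_{\Gamma_2}(E,M_2)$ over $\Gamma_1$.
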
 

\begin{corollary}\label{upper triangular lemma}
Let $A = \bigoplus_{i = 0}^{\ell} A_{i}$ be a finitely graded algebra. 
Then $A_{0}$ satisfies the condition (PF) (resp. (IF), (F) ) if and only if so does $\nabla A$. 
\end{corollary}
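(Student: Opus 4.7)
The plan is to proceed by induction on $\ell$, iteratively peeling off a copy of $A_0$ from the diagonal of $\nabla A$ via the preceding lemma on $2\times 2$ block upper-triangular matrix algebras.

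When $\ell = 1$, the matrix description \eqref{Beilinson algebra} gives $\nabla A = A_0$, so there is nothing to prove.

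For the inductive step with $\ell \geq 2$, first note that since $A$ has top degree $\ell$, the subspace $A_\ell$ is a two-sided ideal of $A$ (all products $A_{\geq 1}\cdot A_\ell$ and $A_\ell \cdot A_{\geq 1}$ land in components of degree $> \ell$ and thus vanish). Hence $A' := A/A_\ell$ is a finitely graded algebra with maximal degree at most $\ell - 1$ and $(A')_0 = A_0$. Using \eqref{Beilinson algebra} to identify $\nabla A$ with an $\ell \times \ell$ upper triangular matrix algebra, one can split off the first row and column to obtain a $2\times 2$ block decomposition
\[
\nabla A \;\cong\; \begin{pmatrix} A_0 & E \\ 0 & \nabla A' \end{pmatrix},
\]
where $\nabla A'$ is the lower-right $(\ell-1)\times(\ell-1)$ block (the Beilinson algebra of $A'$) and $E := A_1 \oplus A_2 \oplus \cdots \oplus A_{\ell-1}$ is the top row equipped with its natural $A_0$-$\nabla A'$-bimodule structure coming from matrix multiplication. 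Here $E$ is finitely generated on both sides, each $A_i$ being finitely generated over $A_0$ on both sides in the (graded Noetherian) setting implicit in the statement.

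Now the preceding lemma applied to this decomposition shows that $\nabla A$ satisfies the condition (PF) (resp.\ (IF), (F)) if and only if both $A_0$ and $\nabla A'$ do. By the inductive hypothesis applied to $A'$, the algebra $\nabla A'$ satisfies the condition if and only if $(A')_0 = A_0$ does, and combining these equivalences yields the conclusion. The argument involves nothing beyond this bookkeeping; all the substantive content sits in the preceding $2 \times 2$ lemma, so I do not anticipate any genuine obstacle, the only minor point being the verification that the top row $E$ really is finitely generated as a bimodule, which follows from the finiteness built into the ambient setting.
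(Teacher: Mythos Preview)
Your approach is correct and is essentially what the paper has in mind: the paper simply notes that the corollary follows from the preceding $2\times 2$ lemma (whose proof is also left to the reader), and your induction is the natural way to make this explicit.

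One small technical wrinkle: your identification of the lower-right $(\ell-1)\times(\ell-1)$ block with $\nabla A'$ uses that $A'=A/A_\ell$ has maximal degree exactly $\ell-1$, but the paper's convention defines $\ell$ as the actual top degree, so if $A_{\ell-1}=0$ then $\nabla A'$ would be a smaller matrix. This is harmless---just work directly with the lower-right block as an algebra (it is again upper triangular with all diagonal blocks $A_0$) and induct on the size of the matrix rather than on the top degree of an auxiliary graded algebra. With that adjustment the argument goes through verbatim.
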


\subsubsection{Existence of a generator in $\grSing A$}\label{generator} 

In this Section \ref{generator} we discuss relationship between 
existence of generators in $\grSing A$  and the condition (F) on $A_{0}$. 

Let $\sfT$ be a triangulated category. 
An object $S \in \sfT$ is called a \emph{thick generator} if $\thick S = \sfT$. 
An object $S \in \sfT$ is called a \emph{generator} (resp. \emph{cogenerator}) if 
for an object $X \in \sfT$ the condition that 
$\Hom_{\sfT}(S, X[n]) = 0$ (resp. $\Hom_{\sfT}(X, S[n]) = 0$) for $n \in \ZZ$ 
implies $X= 0$. 
It is easy to see that a thick generator is both a  generator and a  cogenerator. 
We note that a tilting object $S \in \sfT$ defined in Section \ref{tri cat} is a thick generator.

The following proposition in the case of graded self-injective algebra  over a field is shown in \cite[Lemma 3.5]{Yamaura}, 
and the later generalized in \cite[Lemma 3.2]{LZ} 
where the self-injective assumption is dropped. 

%

\begin{proposition}\label{thick varpi proposition}
Assume that $\kk$ is Noetherian and $A= \bigoplus_{i= 0}^{\ell}A_{i}$ is an  IG-algebra which is finitely generated as a $\kk$-module. 
If  $A_{0}$ satisfies the condition (F), then  $\varpi(T)$ is a thick generator of $\grSing A$. 
\end{proposition}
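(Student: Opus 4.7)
My plan is to first reduce to the case $\ell = 1$ via the quasi-Veronese construction of Section~\ref{Quasi-Veronese algebra construction}: by Lemma~\ref{201708232231} and the commutative diagram~\eqref{qv diagram}, the proposition for $A$ is equivalent to the proposition for $A^{[\ell]}$, with $T$ corresponding to $\nabla A = A^{[\ell]}_0$. Since $\nabla A$ inherits condition (F) from $A_0$ by Corollary~\ref{upper triangular lemma}, I may assume $\ell = 1$, $A = \Lambda \oplus C$ is a trivial extension with $\Lambda$ satisfying~(F), and $T = \Lambda$ is placed in degree~$0$ with zero $C$-action. By Lemma~\ref{lemma: finite local dimension}(1), condition (PF) yields $\sfD^{\mrb}(\mod \Lambda) = \thick_{\sfD^{\mrb}(\mod \Lambda)} \Lambda$, and via the embedding $\sfD^{\mrb}(\mod \Lambda) \hookrightarrow \sfD^{\mrb}(\grmod A)$ placing $\Lambda$-modules in degree~$0$ (with zero $C$-action), every such $M$ satisfies $\varpi(M) \in \thick \varpi(\Lambda)$.

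For a general $N \in \grmod A$ supported in degrees $[a,b]$, the degree-truncation filtration $N = N_{\geq a} \supset N_{\geq a+1} \supset \cdots \supset N_{\geq b+1} = 0$ has subquotients which are $\Lambda$-modules concentrated in single degrees, and each such $\Lambda$-module admits by (F) a finite $\Lambda$-projective resolution by copies of $\Lambda$ in the same degree. This reduces the problem to showing $\varpi(\Lambda(-k)) \in \thick \varpi(\Lambda)$ for every $k \in \ZZ$. The main tool is the short exact sequence coming from the degree truncation of $A(-k)$, namely $0 \to A(-k)_{\geq k+1} \to A(-k) \to \Lambda(-k) \to 0$, where $A(-k)_{\geq k+1}$ is the graded module $C$ placed in degree $k+1$. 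Since $\varpi(A(-k)) = 0$ in $\grSing A$, and since by (F) the right $\Lambda$-module $C$ has a finite projective resolution by copies of $\Lambda$ (so the placement of $C$ in degree $k+1$ lies in $\thick \Lambda(-k-1)$ inside $\sfD^{\mrb}(\grmod A)$), this SES gives $\varpi(\Lambda(-k)) \cong \varpi(A(-k)_{\geq k+1})[1] \in \thick \varpi(\Lambda(-k-1))$. Iterating yields the ascending chain $\thick \varpi(\Lambda) \subseteq \thick \varpi(\Lambda(-1)) \subseteq \thick \varpi(\Lambda(-2)) \subseteq \cdots$, which immediately handles $\Lambda(m)$ for all $m \leq 0$.

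The main obstacle is the opposite direction: establishing $\varpi(\Lambda(-k)) \in \thick \varpi(\Lambda)$ for $k > 0$. Here I would use the IG hypothesis essentially, by considering a bounded graded injective resolution $A \to I^\bullet$ (which exists since $\grinjdim A < \infty$) together with its decomposition $I^\bullet = \bigoplus_i \frks_i I^\bullet$ from Lemma~\ref{20170819245}. The truncation triangles $\frks_{\leq i} I^\bullet \to I^\bullet \to \frks_{> i} I^\bullet \to$ in $\sfD^{\mrb}(\grmod A)$, suitably shifted by $(-k)$, should provide dual short exact sequences that realise $\varpi(\Lambda(-k))$ in the thick closure of $\varpi(\Lambda(-k+j))$ for $j \geq 1$; the finiteness of $\grinjdim A$ bounds the length of this iteration. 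Combining this downward propagation with the upward propagation from the second paragraph collapses all the subcategories $\thick \varpi(\Lambda(-k))$, $k \in \ZZ$, to $\thick \varpi(\Lambda) = \thick \varpi(T)$, and therefore $\thick \varpi(T) = \grSing A$ as required. The hardest technical ingredient will be controlling the interaction between the grading shifts in the $\frks_i$ decomposition and the finite-length constraint from the IG hypothesis, so that the induction on $k$ actually terminates.
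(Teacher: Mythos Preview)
Your reduction to $A=\Lambda\oplus C$ and the filtration argument are fine and match the paper's setup. Your short exact sequence $0\to C(-k-1)\to A(-k)\to\Lambda(-k)\to 0$ indeed gives $\varpi(\Lambda(-k))\in\thick\varpi(\Lambda(-k-1))$ for every $k\in\ZZ$; this is essentially the paper's triangle $M\lotimes_{\Lambda}C\to M\lotimes_{\Lambda}A(1)\to M(1)\to$ specialised to $M=\Lambda$. But note the direction: the ascending chain $\thick\varpi(\Lambda)\subseteq\thick\varpi(\Lambda(-1))\subseteq\cdots$ shows $\varpi(\Lambda(m))\in\thick\varpi(\Lambda)$ for $m\geq 0$, not for $m\leq 0$ as you wrote. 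What remains is exactly the negative shifts $\varpi(\Lambda(-k))\in\thick\varpi(\Lambda)$ for $k>0$.

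For that direction your sketch is not a proof: the $\frks$-decomposition of an injective resolution of $A$ gives triangles whose middle term is $A(-k)$, which is already zero in $\grSing A$, so these triangles do not relate $\Lambda(-k)$ to $\Lambda(-k+j)$ in any useful way. The missing idea is the one the paper uses and which is dual to your first step: condition~(IF) gives $M\in\sfK^{\mrb}(\Inj\Lambda)$, so $\grRHom_{\Lambda}(A,M)\in\sfK^{\mrb}(\GrInj A)$, and by Lemma~\ref{derived interpretation of Iwanaga lemma} (this is where IG enters) this lies in $\sfK^{\mrb}(\grproj A)$ and hence vanishes in $\grSing A$. The triangle
\[
\RHom_{\Lambda}(C,M)[-1]\to M(-1)\to\grRHom_{\Lambda}(A,M)(-1)\to
\]
then gives $\pi(M(-1))\cong\pi(\RHom_{\Lambda}(C,M))[-1]$, which lies in $\thick\varpi(\Lambda)$ since $\RHom_{\Lambda}(C,M)\in\sfD^{\mrb}(\mod\Lambda)=\thick\Lambda$ (using Lemma~\ref{module finite lemma} and (PF)). Iterating handles all negative shifts. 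So the right object to coinduce is $M$, not $A$, and Iwanaga's lemma, not the $\frks$-decomposition, is what converts bounded injective complexes into perfect ones.
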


\begin{proof}
We may assume that $A = \Lambda \oplus C$ by Corollary \ref{upper triangular lemma}. 
We note that $T =\Lambda$.

Observe that every  object $X \in \sfD^{\mrb}(\grmod A)$  is constructed from $M(n)$ for $M \in \sfD^{\mrb}(\mod \Lambda)$ and $n \in \ZZ$ by taking extensions iteratively. 
Therefore every  object $ \pi X \in \grSing A $ is constructed from $\pi M(n)$ for $M \in  \sfD^{\mrb}(\mod \Lambda)$ and $n \in \ZZ$ by 
taking extensions iteratively. 
Thus it is enough to check that $\pi M(n) \in \thick \varpi \Lambda $  for $M \in \sfD^{\mrb}(\mod \Lambda)$ and $n \in \ZZ$.

We fix $M \in \sfD^{\mrb}(\mod \Lambda)$ and prove that $\pi M(n) \in \thick \varpi \Lambda$  for $n \in \ZZ$.

First, we deal with the case where $n = 0$. 
We have  $M \in \sfK^{\mrb}(\proj \Lambda)=\thick \Lambda$ by Lemma \ref{lemma: finite local dimension}(2). 
Therefore, $\pi M \in \thick \varpi \Lambda$. 

 Next, we deal with the case $n >0$. For simplicity, we set $C^{n} := C\lotimes_{\Lambda} \cdots \lotimes_{\Lambda} C$ ($n$-times) for $n > 0$. 
 It follows from $M \in \sfK^{\mrb}(\proj \Lambda)$ that  the complex $M \lotimes_{\Lambda} A$ belongs to $\sfK^{\mrb}(\grproj A)$. 
 From the exact triangle below, we see that $M(1)$ becomes isomorphic to $M\lotimes_{\Lambda} C[1]$  in $\grSing A$.
\[
M \lotimes_{\Lambda}C \to M \lotimes_{\Lambda} A(1) \to M(1) \to M \lotimes_{\Lambda}C[1]
\]
Therefore,  $M(n)$ become  isomorphic to $M \lotimes_{\Lambda} C^{n}[n]$ in $\grSing A$ for $n > 0$. 
Since $\sfD^{\mrb}(\mod \Lambda) = \sfK^{\mrb}(\proj \Lambda)$, 
the functor $- \lotimes_{\Lambda} C$ sends $\sfD^{\mrb}(\mod \Lambda)$ to $\sfD^{\mrb}(\mod \Lambda)$. 
It follows that $M \lotimes_{\Lambda} C^{n}[n]$ belongs to $\sfK^{\mrb}(\proj \Lambda) = \thick \Lambda$, 
Thus we conclude that  $\pi M(n)$ belongs to $\thick \varpi \Lambda$ for $n > 0$.

Finally, we deal with the case $n < 0$. 
We have $M \in \sfK^{\mrb}(\Inj \Lambda)$ by Lemma \ref{lemma: finite local dimension} (2), 
so it follows that  the complex $\grRHom_{\Lambda}(A, M)$ belongs to $\sfK^{\mrb}(\grInj A)$. 
Moreover, it also belongs to $\sfD^{\mrb}(\mod\Lambda)$ by Corollary \ref{202003121955}. 
Hence it belongs to $\sfK^{\mrb}(\grproj A)$ by Lemma \ref{derived interpretation of Iwanaga lemma}. 
We set $F(-):= \RHom_{\Lambda}(C, -)$. 
Then,  from an exact triangle below, we see that $M(-1)$ becomes  isomorphic to $F(M)[-1] =\RHom_{\Lambda}(C, M)[-1]$ in $\grSing A$. 
\[
\RHom_{\Lambda}(C, M)[-1] \to M(-1) \to \grRHom_{\Lambda}(A,M)(-1) \to \RHom_{\Lambda}(C, M)
\]
Therefore, we see that $M(n)$ becomes  isomorphic to $F^{-n}(M)[n]$ in $\grSing A$ for $n < 0$. 

By Corollary \ref{202003121955}, 
the functor $F(-) = \RHom_{\Lambda}(C, -)$ sends $\sfD^{\mrb}(\mod \Lambda)$ to $\sfD^{\mrb}(\mod \Lambda)$. 
It follows that the object $F^{-n}(M)[n]$ belongs to $\sfD^{\mrb}(\mod \Lambda) = \thick \Lambda$ for $n < 0$. 
Thus,  we conclude that  $\pi M(n)$ belongs to $\thick \varpi \Lambda$ for $n <0$.
\end{proof}

The above proposition asserts that  finiteness of homological dimension on $A_{0}$ implies existence of a thick generator  in $\grSing A$. 
We do not  know whether the converse holds or not. 
However,  a stronger generating condition, existence of a tilting object  implies that $A_{0}$ satisfies the condition $(F)$.

\begin{proposition}\label{tilting condition (F) proposition}
Assume that $\kk$ is Noetherian and $A= \bigoplus_{i = 0}^{\ell} A_{i}$ is an 
IG-algebra 
which is finitely generated as a $\kk$-module. 
If $\grSing A$ has a tilting object $S$, then $A_{0}$ and $A_{0}^{\op}$ satisfies the condition (F).
\end{proposition}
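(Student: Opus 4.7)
The plan is to reduce to the case $\ell = 1$ by the quasi-Veronese construction and then exploit the strong homological bound that a tilting object imposes on $\grSing A$. By Section \ref{Quasi-Veronese algebra construction}, the equivalence $\mathsf{qv}$ induces an equivalence $\grSing A \cong \grSing A^{[\ell]}$ which carries tilting objects to tilting objects, and by Corollary \ref{upper triangular lemma} the condition (F) holds for $A_0$ (resp.\ $A_0^{\op}$) if and only if it holds for $\nabla A = A_0^{[\ell]}$ (resp.\ $(A_0^{[\ell]})^{\op}$). So I may assume $A = \Lambda \oplus C$ with $\Lambda = A_0$ and $C = A_1$, in which case $\nabla A = \Lambda$ and Happel's functor takes the form $\varpi: \sfD^{\mrb}(\mod \Lambda) \to \grSing A$.

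Fix a tilting object $S \in \grSing A$ and set $\Gamma := \End_{\grSing A}(S)$. Since $A$ is IG, $\grSing A \cong \stabgrCM A$ is an algebraic triangulated category (as the stable category of the Frobenius exact category $\grCM A$). A standard tilting theorem in the algebraic setting (of Rickard/Keller type) then provides a triangulated equivalence $\grSing A \xrightarrow{\sim} \sfK^{\mrb}(\proj \Gamma)$ sending $S$ to $\Gamma$. The key consequence I will use is that for any $X, Y \in \grSing A$, the group $\Hom_{\grSing A}(X, Y[n])$ vanishes for all $|n|$ sufficiently large (since this is true in $\sfK^{\mrb}(\proj \Gamma)$).

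To conclude finite injective dimension over $\Lambda$, consider a finitely generated $\Lambda$-module $M$ and view it as a graded $A$-module concentrated in degree $0$ via the augmentation $A \twoheadrightarrow A/C = \Lambda$. Apply the preceding $\Hom$-vanishing to $X = \pi(\Lambda)(-k)$ and $Y = \pi(M)$ as $k \in \ZZ$ varies. Using Corollary \ref{20182242126} together with the $\frks$-decomposition of a graded injective resolution of $M$ over $A$ from Lemma \ref{20170819245}, one translates this vanishing into vanishing of $\Ext_\Lambda^n(-, M)$ for all $|n|$ large, giving $\injdim_\Lambda M < \infty$. The parallel argument for $A^{\op}$ --- which is IG and inherits a tilting object in $\grSing A^{\op}$ via the contravariant duality $\grRHom_A(-, A)$ coming from the IG hypothesis on $A$ --- yields $\injdim_{\Lambda^{\op}} M < \infty$. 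Finally, Lemma \ref{lemma: finite local dimension}(3) converts (IF) on both sides into (F) on both sides.

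The hardest part is the translation step just above. The clean formulas $\pi(M(k)) \cong \pi(M \lotimes_\Lambda C^k)[k]$ and $\pi(M(-k)) \cong \pi(\RHom_\Lambda(C^k, M))[-k]$ employed in the proof of Proposition \ref{thick varpi proposition} already presuppose $M \in \sfK^{\mrb}(\proj \Lambda)$ or $M \in \sfK^{\mrb}(\Inj \Lambda)$ --- which is precisely what one is trying to establish --- and so cannot be invoked directly here without circularity. Instead one must work with a (possibly unbounded) graded injective resolution $I$ of $M$ over $A$ and extract the relevant $\Ext$-vanishings over $\Lambda$ from the $\frks$-pieces of $I$ together with Corollary \ref{20182242126}. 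This bootstrap from a $\Hom$-bound in $\grSing A$ back to finite injective dimension of $M$ over $\Lambda$ is the technical heart of the proof.
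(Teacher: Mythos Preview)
Your overall architecture matches the paper: reduce to $\ell=1$, use the tilting object to force $\Hom$-vanishing in $\grSing A$, obtain (IF) for $\Lambda$, pass to $A^{\op}$ via the $A$-duality $\grRHom_A(-,A)$ to obtain (IF) for $\Lambda^{\op}$, and then conclude (F) on both sides by Lemma \ref{lemma: finite local dimension}(3). The gap is precisely at the step you label the ``technical heart''. The tools you invoke, Corollary \ref{20182242126} and Lemma \ref{20170819245}, compute $\RHom$ in $\sfD(\GrMod A)$, not in the Verdier quotient $\grSing A$. The objects $\pi(\Lambda)(-k)$ are not CM in general, so there is no direct identification of $\Hom_{\grSing A}(\pi(\Lambda)(-k),\pi(M)[n])$ with any $\Ext_\Lambda$-group, and extracting ``$\Ext_\Lambda^n(-,M)=0$ for $|n|\gg 0$'' from the $\frks$-pieces of an injective resolution of $M$ over $A$ is exactly the statement you are trying to prove, not a tool available to prove it. Moreover, even if you could identify these $\Hom$-groups, the vanishing bound coming from $\sfK^{\mrb}(\proj \Gamma)$ depends on both objects, so you would not get a uniform bound over the first variable needed for $\injdim_\Lambda M<\infty$.

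The paper's actual argument is more indirect. It does not aim at arbitrary first arguments but only at the single bimodule $C$: it shows $\Ext_\Lambda^{n}(C,M)=0$ for $n\gg 0$ by taking the $d$-th $A$-syzygy $D$ of $C$ (with $d=\injdim A$), which is CM, so that dimension shifting plus the Buchweitz description give $\Ext_\Lambda^{n+d}(C,M)\cong\Ext_A^{n}(D,M)\cong\Hom_{\grSing A}(\pi D,\pi M[n])$ for $n>0$, and the tilting object kills this for $n\gg 0$. Combined with Lemma \ref{module finite lemma} this yields $\RHom_\Lambda(C,M)\in\sfD^{\mrb}(\mod\Lambda)$ for every $M$, which is the hypothesis of Corollary \ref{seiri corollary 1}(3); that corollary (itself a nontrivial application of the $\frks$-machinery via Lemma \ref{seiri lemma 1}) then delivers (IF) for $\Lambda$. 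Your sketch omits both the CM-syzygy trick for the fixed module $C$ and the appeal to Corollary \ref{seiri corollary 1}, and these are the missing ideas.
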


This proposition in the case of graded self-injective algebra over a field 
is shown in  \cite[Theorem 3.1]{Yamaura}, 
and later generalized in \cite[Lemma 3.1]{LZ} where the self-injective assumption is dropped.

We need some preparations first. 

\begin{lemma}\label{seiri lemma 1}
Assume that $\kk$ is Noetherian and $A= \Lambda \oplus C$ is an 
IG-algebra 
which is finitely generated as a $\kk$-module. 
Then the following assertions hold. 
\begin{enumerate}[(1)]
\item  We have $\pi \sfD^{\mrb}(\mod^{\leq 0} A ) \perp \pi\left( \sfD^{\mrb}(\grmod A) \cap \sfK^{+}(\Inj^{> 0 \textup{-cog}}A) \right)$. 

\item Assume moreover that  for all $M \in \sfD^{\mrb}(\mod \Lambda)$, we have
 $\RHom_{\Lambda}(C, M) \in \sfD^{\mrb}(\mod \Lambda)$.
Then, we have 
\[
\grSing A = \pi \sfD^{\mrb}(\mod^{\leq 0} A ) * \pi\left( \sfD^{\mrb}(\grmod A) \cap \sfK^{+}(\Inj^{> 0 \textup{-cog}}A) \right).
\]
\end{enumerate}
\end{lemma}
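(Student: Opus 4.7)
The plan hinges on the canonical filtration $0 \to \frks_{\leq 0} I \to I \to \frks_{>0} I \to 0$ of a graded injective complex, together with the Hom-vanishing
\[
\Hom_{\GrMod A}(M, J) \;=\; \prod_{i \in \ZZ} \Hom_{\Lambda}(M_i, \frki_i J) \;=\; 0
\]
for $M \in \Mod^{\leq 0} A$ and $J \in \Inj^{>0} A$, which follows from \eqref{20182242123} since each factor is killed either by $M_i = 0$ (when $i > 0$) or by $\frki_i J = 0$ (when $i \leq 0$).

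For (1), I would fix $X \in \sfD^{\mrb}(\mod^{\leq 0} A)$ and $Y \in \sfD^{\mrb}(\grmod A) \cap \sfK^{+}(\Inj^{>0} A)$. First I would observe that $\Hom_{\sfD^{\mrb}(\grmod A)}(X, Y[n]) = 0$: represent $X$ by a bounded complex in $\mod^{\leq 0} A$; since $Y$ is a bounded-below complex of graded injectives it is DG-injective, so $\Hom_{\sfD}(X, Y[n]) = \Hom_{\sfK}(X, Y[n])$, and each chain map is componentwise zero by the Hom-vanishing above. To upgrade this to $\grSing A$, I would represent a morphism $\phi \colon \pi X \to \pi Y[n]$ by a roof $X \xrightarrow{f} V \xleftarrow{s} Y[n]$ with $\cone(s) \in \sfK^{\mrb}(\grproj A)$, choosing $V$ as a bounded-below complex of graded injectives. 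From the triangle $Y[n] \to V \to \cone(s) \to$ and $\frki_{\leq 0} Y = 0$, applying the exact functor $\frki_{\leq 0}$ yields $\frki_{\leq 0} V \cong \frki_{\leq 0} \cone(s)$, which lies in $\sfD^{\mrb}(\mod \Lambda)$ since $\cone(s)$ is a bounded complex of finitely generated graded projectives. Hence $\frks_{\leq 0} V$ sits in $\sfD^{\mrb}(\grmod A) \cap \sfK^{\mrb}(\GrInj A) = \sfK^{\mrb}(\grproj A)$ by Lemma~\ref{derived interpretation of Iwanaga lemma}. The triangle $\frks_{\leq 0} V \to V \to \frks_{>0} V \to$ therefore induces an isomorphism $\pi V \cong \pi(\frks_{>0} V)$ in $\grSing A$, under which $\pi f$ corresponds to $\pi$ applied to the composite $X \xrightarrow{f} V \to \frks_{>0} V$. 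Since $\frks_{>0} V \in \sfK^{+}(\Inj^{>0} A)$, the initial step yields vanishing of this composite in $\sfD^{\mrb}(\grmod A)$, so $\phi = 0$.

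For (2), every object of $\grSing A$ has the form $\pi Z$ with $Z \in \sfD^{\mrb}(\grmod A)$. Since $A$ is IG, $Z$ admits a bounded injective resolution $I \in \sfC^{\mrb}(\GrInj A)$. The canonical exact sequence $0 \to \frks_{\leq 0} I \to I \to \frks_{>0} I \to 0$ of complexes gives an exact triangle in $\sfD(\GrMod A)$. By construction $\frks_{>0} I \in \sfK^{+}(\Inj^{>0} A)$; moreover each $\frks_i I$ is concentrated in degrees $\{i-1, i\}$ (using $\ell = 1$), so $\frks_{\leq 0} I$ has all graded pieces in degrees $\leq 0$. The extra hypothesis feeds into Lemma~\ref{finite cohomology lemma} to place both $\frks_{\leq 0} I$ and $\frks_{>0} I$ in $\sfD^{\mrb}(\grmod A)$; in particular $\frks_{\leq 0} I \in \sfD^{\mrb}(\mod^{\leq 0} A)$. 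Applying $\pi$ to the triangle realizes $\pi Z$ in the desired star product.

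The delicate point is part (1): the vanishing of the homset in $\sfD^{\mrb}(\grmod A)$ does not automatically descend to the Verdier quotient, so the essential new input is the roof-splitting trick — isolating the $\Inj^{>0}$ part of the middle term $V$ via the $\frks_{\leq 0}/\frks_{>0}$ filtration and discarding the $\frks_{\leq 0}$ part as perfect by means of Iwanaga's lemma.
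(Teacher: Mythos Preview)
Your argument is correct, but it diverges from the paper's in part~(1). The paper represents the roof as $M \xrightarrow{f'} I' \xleftarrow{s} I$ (with $I$ the right-hand object, already in $\sfC^{+}(\Inj^{>0}A)$), takes a \emph{bounded} injective model $K \in \sfC^{\mrb}(\GrInj A)$ of $\cone(s)[-1]$, and observes that the connecting map $t\colon K \to I$ kills $\frks_{\leq 0}K$ for support reasons; the cone of the induced $\hat t\colon \frks_{>0}K \to I$ gives a new target $I''\in \sfK^{+}(\Inj^{>0}A)$ through which the roof factors, forcing it to vanish. Your variant instead splits the \emph{middle} term $V$ via $\frks_{\leq 0}V \to V \to \frks_{>0}V$ and kills $\frks_{\leq 0}V$ as perfect. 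This works, but note that your sentence ``$\frki_{\leq 0}\cone(s)$ lies in $\sfD^{\mrb}(\mod\Lambda)$ since $\cone(s)$ is a bounded complex of finitely generated graded projectives'' hides a step: the functor $\frks_{\leq 0}$ acts on complexes of \emph{injectives}, so one must first pass to a bounded injective replacement $K$ of the chain-level cone (available by IG) and use that $\frks_{\leq 0}$ is functorial on $\sfK(\GrInj A)$ and respects mapping-cone triangles; then Lemma~\ref{finite cohomology lemma} applied to $K$ gives $\frks_{\leq 0}V \cong \frks_{\leq 0}K \in \sfD^{\mrb}(\grmod A)$, and Lemma~\ref{derived interpretation of Iwanaga lemma} finishes. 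The paper's route avoids this functoriality discussion by working only with the bounded object $K$.

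For (2) your argument is in fact more direct than the paper's: since $A$ is IG you may take $I \in \sfC^{\mrb}(\GrInj A)$, and then Lemma~\ref{finite cohomology lemma} applies \emph{as stated} without invoking the extra hypothesis $\RHom_{\Lambda}(C,M)\in\sfD^{\mrb}(\mod\Lambda)$. The paper instead takes $I\in\sfC^{+}(\Inj A)$ and re-runs the inductive argument of Lemma~\ref{finite cohomology lemma} in that generality, which is where the extra hypothesis enters. So your invocation of the extra hypothesis in (2) is harmless but unnecessary.
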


\begin{proof}
(1) 
First we remark  that since $\Hom_{\GrMod A}(M , I ) = 0$ for $M \in \mod^{\leq 0} A$ and $ I \in \Inj^{>0\textup{-cog} } A$, 
we have 
 $ \sfD^{\mrb}(\mod^{\leq 0} A ) \perp \left( \sfD^{\mrb}(\grmod A) \cap \sfK^{+}(\Inj^{> 0 \textup{-cog}}A) \right)$ in $\sfD^{\mrb}(\grmod A)$. 
 
Let $M \in \sfD^{\mrb}(\mod^{\leq 0} A)$ and 
 $ I \in  \sfD^{\mrb}(\grmod A) \cap \sfK^{+}(\Inj^{> 0\textup{-cog}}A)$. 
 We may assume that $I$ is represented by $I \in \sfC^{+}(\Inj^{> 0\textup{-cog}} A)$. 
Let $f \in \Hom_{\grSing A}(\pi(M), \pi (I))$. 
We take a  diagram below in $\sfD^{\mrb}(\grmod A)$ representing $f$.   
\[
M \xrightarrow{ f' } I' \xleftarrow{ s } I.  
\]
Namely,  $I'$ is an object of  $\sfD^{\mrb}(\grmod A)$ and 
$f': M \to I'$ and $s: I \to I'$ are morphisms  in $\sfD(\GrMod A)$ 
which satisfies the following properties: 
(i) The cone $\cone(s)$ of $s$ belongs to $\sfK^{\mrb}(\grproj A)$. (ii) The map $\pi(s)$ is invertible. (iii) We have $f= \pi(s)^{-1} \pi(f')$ 
in $\grSing A$. 

By Lemma \ref{derived interpretation of Iwanaga lemma}, there exists  $K \in \sfC^{\mrb}(\grInj A)$ which is quasi-isomorphic to $\cone (s)[-1]$. 
The canonical morphism $t: \cone(s)[-1] \to I$ is represented by a morphism $t: K \to I$ in $\sfC(\grInj A)$, which is denoted by the same symbol $t$. Since $\Hom_{\sfC(\GrMod A)}(\frks_{\leq 0}K, I ) = 0$ by Corollary \ref{202003111555}, there exists a morphism $\hat{t} : \frks_{>0} K \to I$ which complete the upper square of the following diagram.
\[
\begin{xymatrix}{
& K \ar[d]_{t} \ar[r] & \frks_{> 0} K \ar[d]^{\hat{t}} \\ 
& I \ar[d]_{s} \ar@{=}[r] & I \ar[d]^{\hat{s}}  \\
M \ar[r]_{f'}  & I' \ar[r]_{u} &I''  
}\end{xymatrix}
\] 
Let $I'' $ be the cone of $\hat{t} $ and $\hat{s} :I \to I''$ be a canonical morphism. 
Then there exists a morphism $u: I' \to I''$ such that $us = \hat{s}$. 

We claim that the morphism $\pi(\hat{s})$ is invertible. 
Indeed, first note that to prove the claim it is enough to show that  $\frks_{> 0}K$ belongs to $\sfK^{\mrb}(\grproj A)$.  
By the property (i), the object $K \cong \cone(s)[-1]$ belongs to $\sfK^{\mrb}(\grproj A)$. 
Hence it belongs to $\sfD^{\mrb}(\grmod A)$. 
It follows from Lemma \ref{finite cohomology lemma} that 
 $\frks_{> 0} K$ belongs to $\sfD^{\mrb}(\grmod A)$. 
Therefore  $\frks_{> 0} K$ belongs to $\sfD^{\mrb}(\grmod A) \cap \sfK^{\mrb}(\GrInj A)$. 
It follows from  Lemma \ref{derived interpretation of Iwanaga lemma} that $\frks_{> 0} K$  belongs to $\sfK^{\mrb}(\grproj A)$.  
Thus we conclude that the morphism $\pi(\hat{s})$ is invertible as desired.

By the above claim, we have $ f= \pi(s)^{-1} \pi(f') = \pi(\hat{s})^{-1}\pi(u f')$. 
On the other hand, 
since $\frks_{>0} K$ and $I$ belongs to $\sfK^{+}( \Inj^{> 0\textup{-cog}} A)$, 
so is the cone $I''$ of $\hat{t}$. 
Therefore $\Hom_{\sfD^{\mrb}(\grmod A)}(M, I'') = 0$ by the remark at the beginning of the proof. 
Thus, in particular $uf' = 0$ and consequently $f= 0$ as desired.

(2) 
Let $M \in \sfD^{\mrb}(\grmod A)$ and $I \in \sfC^{+}(\Inj A)$ an injective resolution of $M$. 
Using the same argument with the proof of Lemma \ref{finite cohomology lemma}, 
we verify that $\frks_{> 0} I, \frks_{\leq 0} I$ belong to $\sfD^{\mrb}(\grmod A)$. 
Thus in particular, $\frks_{> 0} I$ belongs to  $\sfD^{\mrb}(\grmod A) \cap \sfK^{+}(\Inj^{> 0\textup{-cog}}A)$. 
Applying $\pi$ to the exact triangle \eqref{exact triangle}, we obtain an exact triangle 
$\pi(\frks_{\leq 0} I ) \to \pi(M) \to \pi(\frks_{> 0} I) \to $, from which we deduce the desired conclusion. 
\end{proof}

\begin{corollary}\label{seiri corollary 1}
Assume that $\kk$ is Noetherian and $A= \Lambda \oplus C$ is an 
IG-algebra 
which is finitely generated as a $\kk$-module. 
Assume moreover that  for all $M \in \sfD^{\mrb}(\mod \Lambda)$, we have
 $\RHom_{\Lambda}(C, M) \in \sfD^{\mrb}(\mod \Lambda)$. 
 If $\grSing A$ has a  generator $S$, then the following assertions hold. 
 \begin{enumerate}[(1)] 
 
  \item We have $\sfD^{\mrb}(\grmod A) \cap \sfK^{+}(\Inj^{> 0\textup{-cog}}A)  \subset \sfK^{\mrb}(\grproj A)$. 
 \item We have 
 $ \grSing A = \pi \sfD^{\mrb}(\mod^{\leq 0} A ) $.

 \item $\Lambda$ satisfies the condition (IF). 
 \end{enumerate}
 \end{corollary}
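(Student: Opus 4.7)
The plan is to apply Lemma \ref{seiri lemma 1} and reduce the three assertions to the vanishing $\sfB = 0$ in $\grSing A$, where $\sfA := \pi \sfD^{\mrb}(\mod^{\leq 0} A)$ and $\sfB := \pi\bigl(\sfD^{\mrb}(\grmod A) \cap \sfK^{+}(\Inj^{>0} A)\bigr)$ are the two pieces featuring in that lemma. Indeed, (1) is precisely $\sfB = 0$; and given the decomposition $\grSing A = \sfA * \sfB$ with $\sfA \perp \sfB$ from Lemma \ref{seiri lemma 1}, (2) is also equivalent to $\sfB = 0$ because $\sfB \subseteq \sfA$ would force $\sfB \perp \sfB$ and therefore $\sfB = 0$.

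The principal step, and the main obstacle, is to show $\sfB = 0$ by using the graded-shift auto-equivalence $(k)$ on $\grSing A$ to translate the generator into $\sfA$. Write $S = \pi X$ for some $X \in \sfD^{\mrb}(\grmod A)$. Since $A$ is finitely graded, a bounded complex of finitely generated graded $A$-modules representing $X$ has graded support in some finite interval $[a,b]$. Consequently, for any $k > b$ the shift $X(k)$ lies in $\sfD^{\mrb}(\mod^{\leq 0} A)$, so $S(k) = \pi X(k) \in \sfA$; and $S(k)$ remains a generator of $\grSing A$ because $(k)$ is an auto-equivalence commuting with suspension. For any $Y \in \sfB$ the orthogonality in Lemma \ref{seiri lemma 1}(1) gives $\Hom_{\grSing A}(S(k), Y[n]) = 0$ for every $n$, and the generator property then forces $Y = 0$. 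This proves (1) and (2). No abstract categorical argument seems to force $\sfB = 0$ from a mere generator; the trick of translating $S$ into $\sfA$ via the graded-shift auto-equivalence is what makes the hypothesis bite.

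For (3), fix $N \in \mod \Lambda$ with a $\Lambda$-injective resolution $I_N$ and set $Y := \grHom_\Lambda(A, I_N)(-k)$ for any $k > 0$. Each term $\grHom_\Lambda(A, J)(-k)$ satisfies $\frki_i = 0$ for $i \neq k$ via the adjunction $\grHom_A(\Lambda, \grHom_\Lambda(A, J)(-k)) = J(-k)$, so $Y$ lies in $\sfK^{+}(\Inj^{>0} A)$. Using $A \cong \Lambda \oplus C$ as right $\Lambda$-modules, the cohomology $\grExt^\bullet_\Lambda(A, N)(-k)$ decomposes with the summand in degree $0$ contributed by $\Hom_\Lambda(\Lambda, N) = N$ and the remaining part given by $\Ext^\bullet_\Lambda(C, N)(1)$, which the extra assumption keeps bounded and finitely generated; hence $Y \in \sfD^{\mrb}(\grmod A)$. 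By (1) combined with Iwanaga's Lemma \ref{derived interpretation of Iwanaga lemma}, $d := \grinjdim_A Y < \infty$. Finally, for any $L \in \mod \Lambda$ placed in degree $0$, the Hom-adjunction gives $\grRHom_A(L, Y) \cong \RHom_\Lambda(L, N)(-k)$, so $\Ext^n_\Lambda(L, N) = 0$ for $n > d$, which yields $\injdim_\Lambda N \leq d$ and establishes (IF) for $\Lambda$.
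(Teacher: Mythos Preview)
Your proof is correct and follows essentially the same approach as the paper: for (1) and (2) you shift the generator $S$ by the grading auto-equivalence into $\sfA = \pi\sfD^{\mrb}(\mod^{\leq 0}A)$ and then invoke the orthogonality from Lemma~\ref{seiri lemma 1}, exactly as the paper does. For (3) your construction of $Y = \grHom_{\Lambda}(A,I_N)(-k)$ and the verification that it lies in $\sfD^{\mrb}(\grmod A)\cap\sfK^{+}(\Inj^{>0}A)$ match the paper's (the paper takes $k=1$); the only minor difference is the final extraction step: the paper deduces $M\in\sfK^{\mrb}(\Inj\Lambda)$ from $\grRHom_{\Lambda}(A,M)\in\sfK^{\mrb}(\GrInj A)$ via a cited lemma on degree-zero parts, whereas you read off $\Ext^{n}_{\Lambda}(L,N)=0$ for $n>d$ directly from the adjunction isomorphism $\grRHom_A(L,Y)\cong\RHom_{\Lambda}(L,N)(-k)$, which is an equally valid and perhaps more self-contained way to finish.
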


\begin{proof}
(1) 
Let $\tilde{S}$ be an object of $\sfD^{\mrb}(\grmod A)$ such that $\pi(\tilde{S}) =S$. 
There exists an integer $i \in \ZZ$ such that $\tilde{S}_{ > i}=0$. 
Then the object $\tilde{S}(i)$ belongs to $\sfD^{\mrb}(\mod^{\leq 0}A)$. 
Therefore $S(i) = \pi(\tilde{S}(i))$ is a generator of $\grSing A$ which belongs to 
$\pi \sfD^{\mrb}(\mod^{\leq 0} A )$. 

By Lemma \ref{seiri lemma 1}, 
we have the equality  
$\pi\left( \sfD^{\mrb}(\grmod A) \cap \sfK^{+}(\Inj^{> 0\textup{-cog}}A) \right)  = 0$, which implies the desired result.

(2) follows from (1) and Lemma \ref{seiri lemma 1}. 

(3) 
Let $M \in \sfD^{\mrb}(\mod \Lambda)$ and $J \in \sfC^{+}(\Inj \Lambda)$ an injective resolution of $M$. 
By the assumption and Corollary \ref{202003121955}, $X := \grRHom_{\Lambda}(A, M)( -1)$ belongs to $\sfD^{\mrb}(\grmod A)$. 
On the other hand,  it is clear that $X \cong \cpxgrHom_{\Lambda}(A, J)(-1)$ belongs to $\sfK^{+}(\Inj^{> 0\textup{-cog}} A)$. 
It follows from (1)  that $\grRHom_{\Lambda}(A, M)$ belongs to $\sfK^{\mrb}(\grproj A)$.  
Since $A$ is IG, we have $\sfK^{\mrb}(\grproj A) \subset \sfK^{\mrb}(\grInj A)$. 
Therefore  $\grRHom_{\Lambda}(A, M)$ belongs to $\sfK^{\mrb}(\GrInj A)$.

We set $I := \cpxgrHom_{\Lambda}(A, J)$. Note that $I \in \sfC(\GrInj A)$ is an injective resolution of $\grRHom_{\Lambda}(A,M)$. 
Therefore it is homotopic to a bounded complex $ I' \in \sfC^{\mrb}(\GrInj A)$.
Hence $\frki_{0}I$ is homotopic to a bounded complex $\frki_{0}I' \in \sfC^{\mrb}(\Inj \Lambda)$.

It follows from Section \ref{202003151619} that $I_{> 0} \cong \grRHom_{\Lambda}(A, M)_{> 0}= 0$ in $\sfD(\GrMod A)$. 
Therefore we have $I_{0} \cong \frki_{0} I$ by Lemma \ref{20170819245}(1). 
On the other hand, 
it follows from Section \ref{202003151619} that $I_{0} \cong \grRHom_{\Lambda}(A, M)_{0} \cong  M$. 
This shows that  $M \cong \frki_{0}I$ belongs to  $\sfK^{\mrb}(\Inj \Lambda)$. 

Thus we conclude that $\sfD^{\mrb}(\mod \Lambda) \subset \sfK^{\mrb}(\Inj \Lambda)$ as desired. 
\end{proof}

\begin{corollary}\label{seiri corollary 2}
Assume that $\kk$ is Noetherian. 
Let $A = \bigoplus_{i = 0}^{\ell}A_{i}$ be a finitely graded IG-algebra which is  finitely generated as a $\kk$-module. 
We assume that $\pd_{A_{0}} A < \infty$ 
and that  $\grSing A$ has a generator. 
Then $A_{0}$ satisfies the condition (IF).  
\end{corollary}

\begin{proof}
It is clear that the $\ell$-th quasi Veronese algebra $A^{[\ell]} =  \nabla A \oplus \Delta A$ is finitely graded as a $\kk$-module. 
As is explained in Section \ref{Quasi-Veronese algebra construction}, $A^{[\ell]}$ is IG. 
It follows from \cite[Proposition 6.1]{adasore} that $\pd_{\nabla A} \Delta A < \infty$. 
Using Lemma \ref{module finite lemma}, 
we see that for any $M \in \sfD^{\mrb}(\mod \nabla A)$,  the object  $\RHom_{\nabla A}(\Delta A, M)$ belongs to $\sfD^{\mrb}(\mod \nabla A)$.  
This shows that $A^{[\ell]} =  \nabla A \oplus \Delta A$ with the grading $\deg \nabla A := 0, \ \deg \Delta A := 1$ satisfies the assumptions of 
Corollary \ref{seiri corollary 1}. 
It follows that $\nabla A$ satisfies the condition (IF). 
By Corollary \ref{upper triangular lemma}, we conclude that $A_{0}$ satisfies the condition (IF).
\end{proof}

We proceed the proof of Proposition \ref{tilting condition (F) proposition}.

\begin{proof}[Proof of Proposition \ref{tilting condition (F) proposition}]
We may assume that $A = \Lambda \oplus C$ by Corollary \ref{upper triangular lemma}. 
First we claim that $\Lambda$ satisfies the condition (IF). 
Since a tilting object $S$ is a generator, 
it follows from Corollary \ref{seiri corollary 1} (3) that we only have to show that 
for  any $M \in \mod \Lambda $, the complex   $\RHom_{\Lambda}(C, M)$ belongs to $\sfD^{\mrb}(\mod \Lambda)$. 
By Lemma \ref{module finite lemma}, it is enough to show that $\Ext^{n}_{\Lambda}(C,M) = 0$ for $n \gg 0$. 
We set $d := \injdim A$ and take an exact sequence 
\[
 0 \to D \to P^{-d+1} \to \cdots \to P^{0} \to C \to 0 
\]
in $\grmod A$ such that each $P^{-i}$ is finitely generated projective over $A$. 
The graded $A$-module $D$ is CM, since it is the $d$-th syzygy of $C$. 
Then,  for $n > 0$ we have the following isomorphisms 
\[
\begin{split}
\Ext_{\Lambda}^{n+ d} (C, M) & \cong \Ext_{\GrMod A}^{n+d }(C, M) \\ 
& \cong \Ext_{\GrMod A}^{n}(D, M) \\
& \cong \Hom_{\grSing A}(\pi D, (\varpi M) [n]).  
\end{split}
\]
The last map is an isomorphism, since  $D$ is CM. 
Since $\grSing A$ has a tilting object, we have $\Hom_{\grSing A}(\pi D, (\varpi M) [n]) = 0$ for $|n| \gg 0$ by \cite[Proposition 2.4]{Aihara-Iyama}. 
This finishes the proof of the claim. 

Since $\grSing A^{\op}$ is contravariantly  equivalent to $\grSing A$ by the $A$-duality $\RHom(-, A)$, 
it also has a tilting object. 
Thus, applying the first claim to $A^{\op}$ we see that $\Lambda^{\op}$ satisfies the condition (IF).
Thus by Lemma \ref{lemma: finite local dimension}, $\Lambda$ and $\Lambda^{\op}$ satisfies the condition (F). 
\end{proof}

We collect the following two results for a finitely graded Noetherian algebra which is not necessary IG. 
The proofs are left to the readers, since these can be done by the dual arguments of that of Lemma \ref{seiri lemma 1}, Corollary \ref{seiri corollary 1} 
and Corollary \ref{seiri corollary 2}. 

\begin{lemma}\label{seiri lemma 2}
Let $A = \Lambda \oplus C$ be a finitely graded Noetherian algebra. 
Assume  that  for $M \in \sfD^{\mrb}(\mod \Lambda)$, we have
 $M \lotimes_{\Lambda}C \in \sfD^{\mrb}(\mod \Lambda)$ and 
that $\grSing A$ has a  cogenerator S. 
Then the following assertions hold. 
 \begin{enumerate}[(1)] 
  \item We have $\sfK^{-, \mrb}(\proj^{< 0 \textup{-gen}}A)  \subset \sfK^{\mrb}(\grproj A)$.  
 \item We have 
 $ \grSing A = \pi \sfD^{\mrb}(\mod^{\geq 0} A ) $. 
 
 \item $\Lambda$ satisfies the condition (PF). 
 \end{enumerate}
\end{lemma}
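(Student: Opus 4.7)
The plan is to dualise the proofs of Lemma \ref{seiri lemma 1}, Corollary \ref{seiri corollary 1} and Proposition \ref{tilting condition (F) proposition}, exchanging the following ingredients: graded injective resolutions become graded projective resolutions; $\grRHom_{\Lambda}(A,-)$ becomes $-\lotimes_{\Lambda}A$; left-bounded complexes $\sfK^{+}$ become right-bounded complexes $\sfK^{-}$; the Hom-vanishing $\Hom_{A}(\mod^{\leq 0}A,\Inj^{>0}A)=0$ becomes $\Hom_{A}(\proj^{<0}A,\mod^{\geq 0}A)=0$; and the generator hypothesis becomes the cogenerator hypothesis. By Corollary \ref{upper triangular lemma} and the diagram \eqref{qv diagram} I may and shall reduce to the case $\ell=1$, so $A=\Lambda\oplus C$.

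The first step is to establish the projective analogue of Lemma \ref{seiri lemma 1}, namely
\begin{equation*}
\pi\bigl(\sfD^{\mrb}(\grmod A)\cap \sfK^{-}(\proj^{<0}A)\bigr)\perp \pi\sfD^{\mrb}(\mod^{\geq 0}A),
\end{equation*}
and, under the tensor-product finiteness assumption,
\begin{equation*}
\grSing A = \pi\bigl(\sfD^{\mrb}(\grmod A)\cap \sfK^{-}(\proj^{<0}A)\bigr) * \pi\sfD^{\mrb}(\mod^{\geq 0}A).
\end{equation*}
The orthogonality will be lifted from $\sfD^{\mrb}(\grmod A)$ to $\grSing A$ by the dual of the mapping-cone argument in the proof of Lemma \ref{seiri lemma 1}(1): given a roof representing a morphism in $\grSing A$, the cone of the connecting quasi-isomorphism lies in $\sfK^{\mrb}(\grproj A)$, and one splits off its $\proj^{<0}$ piece via a brutal truncation of a bounded-above projective resolution of the cone. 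For the decomposition I use the projective analogue of the $\frks$-truncation from \cite{adasore}: a bounded-above complex $P$ of graded projective $A$-modules decomposes, on the level of underlying graded modules, as $\bigoplus_{i\in\ZZ}P^{(i)}$ where $P^{(i)}$ collects the $A(-i)$-summands, and the associated short exact sequence $0\to P_{\geq 0}\to P\to P_{<0}\to 0$ yields the required exact triangle in $\sfD(\GrMod A)$. The cohomological boundedness of both truncations, the dual of Lemma \ref{finite cohomology lemma}, will be ensured by the assumption $M\lotimes_{\Lambda}C\in\sfD^{\mrb}(\mod\Lambda)$ via an induction on the generating degree.

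Assertion (1) then follows as in Corollary \ref{seiri corollary 1}(1): after shifting the grading I may assume that the cogenerator $S$ lies in $\pi\sfD^{\mrb}(\mod^{\geq 0}A)$; the orthogonality shows that every object of $\pi(\sfD^{\mrb}(\grmod A)\cap \sfK^{-}(\proj^{<0}A))$ is left-orthogonal to every shift of $S$, hence is zero, which translates to $\sfK^{-\mrb}(\proj^{<0}A)\subset \sfK^{\mrb}(\grproj A)$. Assertion (2) is immediate from (1) combined with the decomposition in the first step. For assertion (3), given $M\in\mod\Lambda$, I take a $\Lambda$-projective resolution $P\to M$; then $(P\otimes_{\Lambda}A)(1)\in \sfK^{-}(\proj^{-1}A)\subset \sfK^{-}(\proj^{<0}A)$, and the assumption together with the decomposition $A=\Lambda\oplus C$ as a $\Lambda$-bimodule gives $(M\lotimes_{\Lambda}A)(1)\in\sfD^{\mrb}(\grmod A)$. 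By (1) this complex lies in $\sfK^{\mrb}(\grproj A)$; extracting the degree $-1$ part, which equals $M$, via the projective dual of \cite[Lemma 5.6]{adasore}, yields $M\in\sfK^{\mrb}(\proj\Lambda)$, and hence $\pd M<\infty$.

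The main obstacle is setting up the projective decomposition in sufficient generality. The injective machinery of \cite{adasore} is functorial on $\sfC(\GrInj A)$ and interacts cleanly with quasi-isomorphisms because DG-injective resolutions are unique up to homotopy; the projective dual requires a careful choice of bounded-above projective resolutions and a separate verification that brutal degree-truncations preserve membership in $\sfD^{\mrb}(\grmod A)$ under only the tensor-finiteness assumption. The degree $-1$ extraction at the end is the other delicate point: one must produce a bounded $\Lambda$-projective resolution of $M$ from the degree $-1$ part of a complex in $\sfK^{\mrb}(\grproj A)$, rather than merely a bounded complex of $\Lambda$-modules.
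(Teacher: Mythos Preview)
Your proposal is correct and follows essentially the same route as the paper, which leaves the proof to the reader as the dual of Lemma \ref{seiri lemma 1} and Corollary \ref{seiri corollary 1}. You have correctly identified all the relevant dualisations: injective resolutions and the $\frks$-truncation become projective resolutions and the projective degree-truncation from \cite{adasore}; the orthogonality $\Hom_{A}(\mod^{\leq 0}A,\Inj^{>0}A)=0$ becomes $\Hom_{A}(\proj^{<0}A,\mod^{\geq 0}A)=0$; and the generator becomes a cogenerator. Two minor remarks: the statement is already formulated for $A=\Lambda\oplus C$, so the reduction via Corollary \ref{upper triangular lemma} and \eqref{qv diagram} is superfluous; and in the roof argument the cone of $s$ is already a bounded complex of graded projectives, so no ``projective resolution of the cone'' is needed---this is in fact why the dual works without the IG hypothesis (the original proof had to invoke Lemma \ref{derived interpretation of Iwanaga lemma} to pass from $\sfK^{\mrb}(\grproj A)$ to $\sfK^{\mrb}(\GrInj A)$, whereas here the truncation $\frkt_{<0}K$ is automatically perfect). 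Your extraction step in (3), applying $-\otimes_{A}\Lambda$ to pass from $\sfK^{\mrb}(\grproj A)$ to $\sfK^{\mrb}(\proj\Lambda)$, is the correct dual of the argument via \cite[Lemma 5.6]{adasore}.
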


\begin{proposition}\label{seiri proposition 2}
Let $A = \bigoplus_{i= 0}^{\ell} A_{i}$ be a finitely graded Noetherian algebra. 
Assume that $\pd_{A_{0}} A < \infty$. 
If $\grSing A$ has a cogenerator, then $A_{0}$ satisfies the condition (PF).  
\end{proposition}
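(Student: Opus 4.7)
My plan is to mimic the proof of Corollary \ref{seiri corollary 2} almost verbatim, replacing its appeal to Corollary \ref{seiri corollary 1} by the already-stated (dual) Lemma \ref{seiri lemma 2}. The main work is to pass from the general case $\ell \geq 1$ to the Noetherian finitely graded algebra of the form $\Lambda \oplus C$ to which Lemma \ref{seiri lemma 2} applies, via the quasi-Veronese construction recalled in Section \ref{Quasi-Veronese algebra construction}.

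First I would introduce the quasi-Veronese algebra $A^{[\ell]} = \nabla A \oplus \Delta A$ with the grading $\deg \nabla A = 0$, $\deg \Delta A = 1$, so that $A^{[\ell]}$ is again a Noetherian finitely graded algebra (this uses Noetherianness of $A$ via the equivalence $\mathsf{qv}$). The equivalence $\mathsf{qv}: \GrMod A \cong \GrMod A^{[\ell]}$ descends to an equivalence $\grSing A \cong \grSing A^{[\ell]}$ of triangulated categories by the diagram \eqref{qv diagram}, so that any cogenerator in $\grSing A$ is transported to a cogenerator of $\grSing A^{[\ell]}$.

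Next I would verify the remaining hypothesis of Lemma \ref{seiri lemma 2} for the pair $(\nabla A, \Delta A)$, namely that $M \lotimes_{\nabla A} \Delta A \in \sfD^{\mrb}(\mod \nabla A)$ for all $M \in \sfD^{\mrb}(\mod \nabla A)$. This is implied by $\pd_{\nabla A} \Delta A < \infty$, which in turn follows from the assumption $\pd_{A_0} A < \infty$ via \cite[Proposition 6.1]{adasore} (exactly the step already used in the proof of Corollary \ref{seiri corollary 2}, but for the projective dimension of the right module $\Delta A_{\nabla A}$ instead of its injective dimension).

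With the hypotheses checked, Lemma \ref{seiri lemma 2}(3) yields that $\nabla A$ satisfies the condition (PF). Finally, Corollary \ref{upper triangular lemma} transfers this to $A_0$, giving the desired conclusion. I do not expect any serious obstacle: the argument is entirely formal, the only point requiring care being to confirm that Lemma \ref{seiri lemma 2} was stated with the one-sided hypothesis matching what \cite[Proposition 6.1]{adasore} supplies (derived tensoring with $\Delta A$ on the right), so that no additional bi-sided finiteness assumption on $A$ is needed.
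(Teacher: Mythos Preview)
Your proposal is correct and matches the paper's intended argument. The paper itself does not spell out the proof, stating only that it is the dual of the arguments for Lemma \ref{seiri lemma 1}, Corollary \ref{seiri corollary 1} and Proposition \ref{tilting condition (F) proposition}; in practice the statement is the exact dual of Corollary \ref{seiri corollary 2}, and your reduction via the quasi-Veronese construction, followed by \cite[Proposition 6.1]{adasore} to obtain $\pd_{\nabla A}\Delta A<\infty$, then Lemma \ref{seiri lemma 2}(3) and Corollary \ref{upper triangular lemma}, is precisely that dual.
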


\subsubsection{The condition for which $\varpi$ is  an equivalence}

We give equivalent  conditions for which  $\varpi$ is an equivalence. 

\begin{theorem}\label{tilting theorem} 
Assume that $\kk$ is Noetherian and $A= \bigoplus_{i = 0}^{\ell} A_{i}$ is an 
IG-algebra 
which is finitely generated as a $\kk$-module. 
Then the following conditions are equivalent.
\begin{enumerate}[(1)]
\item 
$A$ is hwg and $A_{0}$ satisfies the condition (F). 

\item 
The Happel  functor $\varpi$ induces an equivalence.

\item 
The object $\varpi(T)$ is a tilting object of $\grSing A$ and 
the induced map $\End_{\grmod A}(T) \to \End_{\grSing A} (\varpi T) $ is an isomorphism.

\item
$A$ is hwg and  $\grSing A$ has a tilting object.

\end{enumerate}
\end{theorem}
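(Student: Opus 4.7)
The plan is to prove the equivalences as a cycle $(1) \Rightarrow (2) \Rightarrow (3) \Rightarrow (4) \Rightarrow (1)$, relying heavily on the already-established Theorem \ref{main theorem 2} together with Propositions \ref{thick varpi proposition} and \ref{tilting condition (F) proposition}, which respectively connect the condition (F) on $A_0$ with the existence of a thick generator of $\grSing A$ and with the existence of a tilting object in $\grSing A$.

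For $(1) \Rightarrow (2)$, the hwg hypothesis gives, via Theorem \ref{main theorem 2}, that $\varpi$ is fully faithful. To upgrade this to an equivalence I need essential surjectivity, which I obtain from Proposition \ref{thick varpi proposition}: under the condition (F) on $A_0$, the object $\varpi(T)$ is a thick generator of $\grSing A$. On the other hand, via the Morita equivalence $\sfq : \mod^{[0,\ell-1]}A \cong \mod \nabla A$ sending $T$ to $\nabla A$, the object $T$ is a thick generator of $\sfD^{\mrb}(\mod^{[0,\ell-1]}A)$. Since $\varpi$ is a triangulated functor, it preserves the operations defining thick closure, so the essential image of $\varpi$ contains $\thick \varpi(T) = \grSing A$. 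Hence $\varpi$ is essentially surjective, and combined with fully faithfulness it is an equivalence.

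For $(2) \Rightarrow (3)$, an equivalence transports tilting objects to tilting objects, and since $T$ corresponds to $\nabla A$ under $\sfq$, $T$ is tilting in $\sfD^{\mrb}(\mod^{[0,\ell-1]}A)$; therefore $\varpi(T)$ is tilting in $\grSing A$. The isomorphism on endomorphisms is immediate from fully faithfulness. For $(3) \Rightarrow (4)$, the tilting hypothesis on $\varpi(T)$ in particular forces $\Hom_{\grSing A}(\varpi(T),\varpi(T)[n]) = 0$ for $n \neq 0$; together with the assumed isomorphism $\gamma$ this is precisely condition (4) of Theorem \ref{main theorem 2}, and thus $A$ is hwg; moreover $\varpi(T)$ is by hypothesis a tilting object of $\grSing A$, giving (4). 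Finally, for $(4) \Rightarrow (1)$, the existence of a tilting object in $\grSing A$ forces $A_0$ to satisfy (F) by Proposition \ref{tilting condition (F) proposition}, and combined with the hwg assumption this gives (1).

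The main obstacle is essential surjectivity in $(1) \Rightarrow (2)$: one must argue that a fully faithful triangulated functor whose image contains a thick generator of the target is essentially surjective. Since $\varpi$, being triangulated, commutes with taking shifts, cones, and summands, the essential image of $\varpi$ is a thick subcategory of $\grSing A$ containing $\varpi(T)$, hence contains $\thick \varpi(T) = \grSing A$; this is the cleanest way to close the argument. All other implications are essentially bookkeeping, reducing to applications of Theorems \ref{main theorem 2} and Propositions \ref{thick varpi proposition}, \ref{tilting condition (F) proposition}.
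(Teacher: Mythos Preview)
Your proof is correct and uses the same key ingredients as the paper (Theorem \ref{main theorem 2}, Proposition \ref{thick varpi proposition}, Proposition \ref{tilting condition (F) proposition}), but organizes the implications differently. The paper proves $(1)\Rightarrow(3)\Rightarrow(2)\Rightarrow(1)$ and then $(1)\Leftrightarrow(4)$ separately, and its direct argument for $(2)\Rightarrow(1)$ is the most technical step: it establishes condition (IF) for $\Lambda$ by invoking Corollary \ref{seiri corollary 1} after a computation showing $\Hom_{\Lambda}(C,M[n])=0$ for $|n|\gg 0$ via the equivalence $\varpi$, then passes to $A^{\op}$ using Proposition \ref{compatibility of dualities}. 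Your cycle $(1)\Rightarrow(2)\Rightarrow(3)\Rightarrow(4)\Rightarrow(1)$ bypasses this entirely by routing through $(4)$ and invoking Proposition \ref{tilting condition (F) proposition}, which already packages the same type of argument. This is slightly more economical.

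One small imprecision: in your $(1)\Rightarrow(2)$ you write that ``$\varpi$, being triangulated, commutes with taking shifts, cones, and summands,'' and conclude that its essential image is thick. A triangulated functor has triangulated essential image, but closure under \emph{summands} is not automatic: it requires fully faithfulness (which you have) together with idempotent completeness of the domain $\sfD^{\mrb}(\mod^{[0,\ell-1]}A)\cong\sfD^{\mrb}(\mod\nabla A)$. The latter holds, so the argument goes through, but the justification should be stated correctly.
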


\begin{remark}
%
%
Lu and Zhu  showed in \cite[Proposition 3.4]{LZ} that 
for a finite dimensional graded IG-algebra $A$ such that $A_0$ is of finite global dimension, 
the module $T$ becomes  a tilting object in $\Sing^{\mathbb{Z}}A$  provided  that it is  a CM-module.
If $A$ is hwg, then $T$ is a CM-module by Theorem \ref{main theorem}. Thus, a part of the implication (1) $\Rightarrow$ (3) follows from 
their result.   
\end{remark}

\begin{proof}
By Corollary \ref{upper triangular lemma}, we may assume that $A = \Lambda \oplus C$ and hence $T =\Lambda$. 

The implication 
(1) $\Rightarrow$ (3) follows from Theorem \ref{main theorem 2} and Proposition \ref{thick varpi proposition}.

(3) $\Rightarrow$ (2). 
It is enough to show that $\varpi$ is essentially surjective. 
Since $ \varpi(\Lambda)$ is tilting object, we have $\grSing A = \thick \varpi(\Lambda)$. 
On the other hand, from the second assumption  we deduce that the restriction of $\varpi$ gives an equivalence $\sfK^{\mrb}(\proj \Lambda) = \thick \Lambda \to \thick \varpi (\Lambda)$. 
Thus we conclude that $\varpi$ is essentially surjective as desired.  

(2) $\Rightarrow$ (1). 
First we claim that $\Lambda$ satisfies the condition (IF). 
Since $\Lambda$ is a  generator of $\sfD^{\mrb}(\mod \Lambda)$, 
the object $\varpi(\Lambda)$ is a  generator of $\grSing A$. 
Thus, by Corollary \ref{seiri corollary 1}, it is enough to show that 
for any $M \in \sfD^{\mrb}(\mod \Lambda)$, we have 
$\RHom_{\Lambda}(C, M) \in \sfD^{\mrb}(\mod \Lambda)$. 
By Lemma \ref{module finite lemma}, it is enough to show that $\Hom_{\Lambda}(C,M[n]) = 0$ for $|n| \gg 0$. 
From the canonical exact sequence $ 0 \to C \to A(1) \to \Lambda( 1) \to 0$ in $\grmod A$ and $\pi A(1) = 0$,  
we deduce an isomorphism  $\varpi C \cong (\varpi \Lambda) (1)[-1]$ in $\grSing A$.  
We set $\widetilde{M} := \varpi^{-1}(\varpi(M)(-1))$.  
Then, 
\[
\begin{split}
\Hom_{\Lambda}(C, M[n] )  
& = \Hom_{\grSing A}(\varpi(C), \varpi(M)[n]) \\ 
&= \Hom_{\grSing A}(\varpi (\Lambda )(1)[ -1], \varpi(M)[n]) \\
& = \Hom_{\grSing A}(\varpi (\Lambda), \varpi(M)(-1)[n +1]) \\
& =\Hom_{\Lambda}(\Lambda, \widetilde{M}[n+1]) = \tuH^{n +1}(\widetilde{M}).
\end{split}\]
Since $\widetilde{M}$ belongs to $\sfD^{\mrb}(\mod \Lambda)$, 
we conclude that $\Hom_{\Lambda}(C, M[n]) = 0$ for $|n| \gg 0$. 
This finishes the proof of the claim.

It follows from Theorem  \ref{main theorem 2} and Theorem \ref{main theorem} that $C$ is a  cotilting bimodule over $\Lambda$. 
Therefore by Lemma \ref{compatibility of dualities}, the Happel functor associated to $A^{\op}$  is also an equivalence.  
Thus, we can apply the claim to $A^{\op}$ and deduce that 
$\Lambda^{\op}$ satisfies the condition (IF). 
Thus by Lemma \ref{lemma: finite local dimension}, $\Lambda$ satisfies the condition (F).

The implication (4) $\Rightarrow$ (1) follows from Proposition \ref{tilting condition (F) proposition}. 
Finally if we assume that the condition (1) is satisfied, then we already know that $\varpi(\Lambda)$ is a tilting object in $\grSing A$. 
This prove the implication (1) $\Rightarrow$ (4). 
\end{proof}

\section{Examples and constructions}\label{Examples and constructions}

\subsection{Truncated tensor algebras}\label{subsection-truncation}

In Section \ref{subsection-truncation}, we give a sufficient conditions for a truncated  tensor algebra to be  hwg IG. 
For an algebra $\Lambda$ and a bimodule $E$, 
we denote by
\[
\sfT_{\Lambda}(E)=\Lambda \oplus E \oplus \left(E^{\otimes_{\Lambda}2}\right) \oplus \left(E^{\otimes_{\Lambda}3}\right) \oplus \cdots\cdots \oplus\left(E^{\otimes_{\Lambda}i}\right) \oplus \cdots\cdots
\]
the tensor algebra of $E$ over $\Lambda$. 
It has a structure of a graded algebra with the grading  $\deg \Lambda = 0, \deg E = 1$.

\begin{proposition}\label{tensor-IG}
Let $\Lambda$ be an IG-algebra, $C$ a cotilting bimodule over $\Lambda$ and $\ell$ a non-negative integer.
Assume that $C^{\lotimes_{\Lambda}i} \in \mod\Lambda$ for all $1\leq i \leq \ell$.
Then the truncated algebra
\[
A:=T_{\Lambda}(C)/T_{\Lambda}(C)_{\geq \ell+1}
\]
is an $\ell$-hwg IG-algebra.
\end{proposition}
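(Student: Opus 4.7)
The plan is to verify condition~(4) of Theorem~\ref{main theorem} for $A$, namely (4-i) that $A_\ell = C^{\otimes \ell}$ is a cotilting $\Lambda$-bimodule, (4-ii) that the natural map $A \to \grHom_\Lambda(A, A_\ell)(-\ell)$ is a $\Lambda$-$A$-bimodule isomorphism, and (4-iii) that $\grExt_\Lambda^{>0}(A, A_\ell) = 0$. All three will follow from a single derived-hom computation for tensor powers of $C$, once I have established that every $C^{\lotimes i}$ for $1 \leq i \leq \ell$ has finite projective and injective dimension on both sides. The latter is a routine induction: since $\Lambda$ is IG, Iwanaga's lemma (Lemma~\ref{derived interpretation of Iwanaga lemma}) upgrades $\injdim_\Lambda C, \injdim_{\Lambda^{\op}} C < \infty$ (from $C$ being cotilting) to finite projective dimension on both sides; then tensoring a finite left-$\Lambda$-projective resolution of $C$ with $C^{\lotimes(i-1)}$ on the right yields a finite resolution of $C^{\lotimes i}$ by direct sums of $C^{\lotimes(i-1)}$ (using exactly the hypothesis $C^{\lotimes_\Lambda i} \in \mod \Lambda$, i.e.\ the vanishing of higher Tors), so induction on $i$ bounds $\pd_\Lambda C^{\lotimes i}$ and hence by Iwanaga $\injdim_\Lambda C^{\lotimes i}$.

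The central step, and what I expect to be the main obstacle, is establishing the identity
\[
\RHom_\Lambda(C^{\lotimes i}, C^{\lotimes(i+k)}) \cong C^{\lotimes k} \qquad (i + k \leq \ell)
\]
in $\sfD^{\mrb}(\mod \Lambda)$, naturally in the bimodule structure. The crucial special case is $\RHom_\Lambda(C, C \lotimes_\Lambda N) \cong N$ for $N \in \sfD^{\mrb}(\mod \Lambda)$, which I would prove by chaining the two cotilting dualities at our disposal: the duality $(-)^\vee = \RHom_\Lambda(-, C)$ from Theorem~\ref{cotilting theorem}, and the self-duality $(-)^* = \RHom_\Lambda(-, \Lambda)$ arising from the fact that the IG-algebra $\Lambda$ is cotilting over itself. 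Tensor-hom adjunction gives $(C \lotimes N)^\vee \cong \RHom_\Lambda(N, \RHom(C, C)) = N^*$, and therefore
\[
\RHom_\Lambda(C, C \lotimes N) \cong \RHom_{\Lambda^{\op}}\bigl((C \lotimes N)^\vee, C^\vee\bigr) = \RHom_{\Lambda^{\op}}(N^*, \Lambda) = N^{**} \cong N.
\]
The general identity then follows by induction on $i$: split $C^{\lotimes i} = C \lotimes_\Lambda C^{\lotimes(i-1)}$ and apply the adjunction $\RHom(C \lotimes X, Y) = \RHom(X, \RHom(C, Y))$ together with the special case.

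With this key computation in hand, the three sub-conditions of~(4) drop out. Setting $i = \ell,\, k = 0$ yields $\RHom_\Lambda(C^{\otimes \ell}, C^{\otimes \ell}) \cong \Lambda$, simultaneously giving $\End = \Lambda$ and $\Ext^{>0} = 0$; together with the symmetric argument over $\Lambda^{\op}$ and the finite injective dimensions from the preparatory step, this establishes~(4-i). For the combined condition (4-ii)+(4-iii), which by Remark~\ref{remark to main theorem} amounts to exhibiting an iso $A \cong \grRHom_\Lambda(A, A_\ell)(-\ell)$ in the derived category of $\Lambda$-$A$-bimodules, I would assemble the degree-wise iso's $\RHom_\Lambda(C^{\lotimes(\ell-j)}, C^{\lotimes \ell}) \cong C^{\lotimes j}$ from the key computation; their naturality in the bimodule structure and compatibility with the concatenation maps $C^{\lotimes j} \otimes_\Lambda C^{\lotimes k} \to C^{\lotimes(j+k)}$ supplies the required right $A$-linearity. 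Invoking Theorem~\ref{main theorem} then yields that $A$ is hwg IG, and the degree-$(-\ell)$ shift in the iso identifies $A$ as $\ell$-hwg.
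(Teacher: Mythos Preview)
Your proposal is correct and follows the same overall strategy as the paper: verify condition~(4) of Theorem~\ref{main theorem} by computing $\RHom_\Lambda(C^{\lotimes i}, C^{\lotimes \ell}) \cong C^{\lotimes(\ell-i)}$, after first establishing via Iwanaga's lemma that each $C^{\lotimes i}$ is perfect over $\Lambda$.

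The only genuine difference is in how you prove the base case $\RHom_\Lambda(C, C^{\lotimes n}) \cong C^{\lotimes(n-1)}$. The paper argues more directly: since $C^{\lotimes(n-1)}$ is perfect as a right $\Lambda$-module, the tensor--hom comparison map
\[
C^{\lotimes(n-1)} \lotimes_\Lambda \RHom_\Lambda(C, C) \longrightarrow \RHom_\Lambda(C, C^{\lotimes(n-1)} \lotimes_\Lambda C)
\]
is an isomorphism, and the left-hand side is $C^{\lotimes(n-1)} \lotimes_\Lambda \Lambda \cong C^{\lotimes(n-1)}$. Your route instead chains the two dualities $(-)^\vee = \RHom_\Lambda(-, C)$ and $(-)^* = \RHom_\Lambda(-, \Lambda)$ (the latter being an equivalence because $\Lambda$ is IG). Both arguments are valid; the paper's avoids invoking the full force of Theorem~\ref{cotilting theorem} and is a touch more self-contained, while yours makes the role of the two dualities more explicit. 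A minor caveat: in your write-up, the expression $C \lotimes_\Lambda N$ for $N \in \sfD^{\mrb}(\mod \Lambda)$ is ill-typed (right modules don't tensor on the right of $C$); in the application $N = C^{\lotimes j}$ is a bimodule so the issue disappears, but you should either restrict the claim to bimodules or reverse the tensor order.
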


\begin{proof}
It is obvious that $A$ is Noetherian and finitely graded.
We show that $A$ satisfies the conditions of Theorem \ref{main theorem} (4).

We set $C^{n} := C \lotimes_{\Lambda} \cdots \lotimes_{\Lambda} C$ ($n$-times). 
First we remark that it follows from  the ungraded version of  Lemma \ref{derived interpretation of Iwanaga lemma} that  if $C$ is regarded as a $\Lambda$-module, then it belongs to $\sfK^{\mrb}(\proj \Lambda)$. Thus if $M$ belongs to  $\sfK^{\mrb}(\proj \Lambda)$, then so dose $M \lotimes_{\Lambda}C$. 
In particular, we see $C^{n} \in \sfK^{\mrb}(\proj \Lambda)$ for $n \geq 0$ by using induction. 

It follows from the first remark that the canonical morphism below is an isomorphism for $n \geq 1$. 
\[C^{n-1} \lotimes_{\Lambda}\RHom_{\Lambda}(C, C) \to \RHom_{\Lambda}(C, C^{n})\]
Since $\RHom_{\Lambda}(C,C) \cong \Lambda$, we obtain an isomorphism $\RHom_{\Lambda}(C, C^{\ell}) \cong C^{\ell -1}$. Using adjunction as  below, we inductively obtain the following  isomorphism  for each  $1 \leq i \leq \ell$.
\[
\begin{split}
\RHom_{\Lambda}(C^{i}, C^{\ell}) & \cong \RHom_{\Lambda}(C, \RHom_{\Lambda}(C^{i-1}, C^{\ell})) \\
  & \cong \RHom_{\Lambda}(C, C^{\ell -i +1}) \cong C^{\ell -i}
 \end{split}
\]
This shows that $A$ satisfies the condition (4-ii) and (4-iii) of Theorem \ref{main theorem}.  

It only remains to check the condition (4-i). Namely  we only have to show that $A_{\ell} = C^{\ell}$ is a cotilting bimodule over $\Lambda$. 
We leave to the readers the verification that the isomorphism $\RHom_{\Lambda}(C^{\ell}, C^{\ell}) \cong C^{0} = \Lambda$ 
obtained above coincide with the canonical morphism. 
Since $C^{\ell} \in \sfK^{\mrb}(\proj \Lambda)$, we see  $\injdim_{\Lambda} C < \infty$ by the ungraded version of  Lemma \ref{derived interpretation of Iwanaga lemma}. 
This shows that the bimodule $A_{\ell}$ satisfies the defining conditions on the $\Lambda$-module structure in Definition \ref{definition of cotilting modules}. 
By a dual argument, we can check that  $A_{\ell}$ satisfies the remaining conditions of a cotilting bimodule.
\end{proof}

As an application, we study the tensor product $A=\Lambda\otimes_{\kk}\kk[x]/(x^{\ell+1})$. 

\begin{example}
Let $\Lambda$ be a Noetherian algebra and $\ell$ a natural number. 
We set 
\[
A = \Lambda \otimes_{\kk} \kk[x]/(x^{\ell +1}), \ \ \deg x= 1.  
\]
We point out an isomorphism $A \cong \sfT_{\Lambda}(\Lambda)/\sfT_{\Lambda}(\Lambda)_{ \ell +1}$ of graded $\kk$-algebras.

In the case where $\Lambda$ is a finite dimensional algebra, it is known that  $A$ is IG if and only if so is $\Lambda$. 
Moreover,   Cohen-Macaulay representation theory of $A$  has been studied by several researchers  (see e.g. \cite{GLS,Lu,RS,Ringel-Zhang}). 
In this example,  
applying our result, we prove the above characterization for $A$ to be IG in general setting.  
Moreover,  we recover the construction of a tilting object in $\grSing A$ and a triangle equivalence  given in \cite[Lemma 3.6]{Lu}.

Let $\Lambda$ be  a Notherian algebra which is not necessarily IG. 
Then it is easy to show  that $A$ and  $A^{\op}$ satisfy the condition (2) of  Proposition \ref{proposition hwg algebra}. 
Therefore $A$ is hwg.

Next, 
we claim that $A$ is IG if and only if so is $\Lambda$, regardless of whether $\Lambda$ is finite dimensional. 
Indeed as we mentioned before, if $\Lambda$ is IG, then $\Lambda$ is a cotilting bimodule over $\Lambda$. 
Since $\Lambda^{\lotimes_{\Lambda} i} $ belongs to $\mod \Lambda$ for $i \geq 0$, 
therefore $A$ is hwg IG by Proposition \ref{tensor-IG}. 
On the other hand,  
if we assume that $A$ is IG, then it is hwg IG. It follows from Theorem \ref{main theorem} that $A_{\ell} = \Lambda$ is  a cotilting bimodule over $\Lambda$.  Therefore $\Lambda$ is IG.  
This finishes the proof of the claim. 

It is easy to check that the Beilinson algebra $\nabla A$ is the $\ell\times \ell$-upper triangular matrix algebra of $\Lambda$. 
\[
\nabla A=\begin{pmatrix}
\Lambda & \Lambda & \cdots &\Lambda \\
 0 &\Lambda &\cdots &\Lambda \\
\vdots &  \vdots &  &\vdots  \\
0 & 0  &\cdots &\Lambda \\ 
\end{pmatrix}
\]
By Proposition \ref{201708231410}, the Happel functor $\varpi : \sfD^{\mrb}(\mod \nabla A) \to \grSing A$ is fully faithful. 
Assume that  $\kk$ is Noetherian and $\Lambda$ is an IG-algebra which is finitely generated as a $\kk$-module.  
Then $\varpi$ gives an equivalence if and only if  $\Lambda$ satisfies the condition (F), if and only if $T$ is a tilting object of $\grSing A$. 
In particular,  the condition  $\gldim \Lambda < \infty$ implies that 
 the Happel functor $\varpi$ gives an equivalence and $T$  is a tilting object. 
 This last assertion was proved by Lu \cite[Lemma 3.6]{Lu} in the case where $\kk$ is a field. 
\end{example}


Cotilting modules $C$ satisfying  the condition $C^{\lotimes_{\Lambda} i} \in \mod \Lambda$ of Proposition \ref{tensor-IG} arise in higher dimensional Auslander-Reiten Theory as   the bimodule $C := \Ext_{\Lambda}^{n}(\tuD(\Lambda), \Lambda)$ over an $n$-representation infinite algebra $\Lambda$. 
In this context, the tensor algebra $\sfT_{\Lambda}(C)$ is a generalization of usual preprojective algebra $\Pi(Q)$. 
Using Proposition \ref{tensor-IG}, we give partial generalizations of result about preprojective algebra $\Pi(Q)$ of non-Dynkin quiver.

\begin{example}
In this example, for simplicity,   the base field is assumed to be algebraically closed and a quiver $Q$ is assumed to be finite and acyclic. 
Let $n \geq 1$ be a positive integer. 
The notion of $n$-representation infinite ($n$-RI) algebra  was introduced by Herschend-Iyama-Oppermann \cite{HIO} 
as a generalization  of path algebras $\kk Q$ of infinite representation type from the view point of higher dimensional AR-theory.

A finite dimensional algebra $\Lambda$ is called $n$-RI if it is of finite global dimension and satisfies the following conditions.  We have $\Ext_{\Lambda}^{m}(\tuD(\Lambda), \Lambda) = 0$ 
except $m = n$ and the bimodule $C := \Ext_{\Lambda}^{n}(\tuD(\Lambda) ,\Lambda)$ satisfies the condition that 
$C^{\lotimes_{\Lambda} i} \in \mod \Lambda$ for all $i \geq 0$. 

Suppose  $\Lambda$ is $n$-RI. 
Then,  the bimodule $C := \Ext_{\Lambda}^{n}(\tuD(\Lambda) ,\Lambda)$ is  cotilting. 
To see this, first recall that cotilting bimodules over $\Lambda$ are precisely tilting bimodules over $\Lambda$, since $\gldim \Lambda < \infty$. 
Then,  observe that $C $ is quasi-isomorphic to $\RHom_{\Lambda}(\tuD(\Lambda), \Lambda)[n]$ and that  
the latter complex is the $\Lambda$-dual of  a (co)tilting bimodule $\tuD(\Lambda)$. Hence it is a (co)tilting bimodule. 
Thus the bimodule $C$ satisfies the conditions of Proposition \ref{tensor-IG}.

A path algebra $\kk Q$ of infinite representation type is $1$-RI (and the converse is also true up to Morita equivalence) 
and the tensor algebra $\sfT_{\kk Q}(C)$ is the preprojective algebra $\Pi(Q)$. 
Therefore, for an $n$-RI algebra $\Lambda$  the tensor algebra $\sfT_{\Lambda}(C)$ is a natural  generalization of a preprojective algebra $\Pi(Q)$ of the path algebra $\kk Q$. 
Hence it  is called the $(n+1)$\emph{-preprojective algebra} of $\Lambda$, denoted by  $\Pi(\Lambda)$ and plays a crucial role in higher AR-theory.
\[
\Pi(\Lambda) :=\sfT_{\Lambda}(C)
\]

By Proposition \ref{tensor-IG}, the truncated $(n+1)$-preprojective algebra $\Pi(\Lambda)_{\leq \ell} = \Pi(\Lambda)/ \Pi(\Lambda)_{\geq \ell +1}$ is hwg IG for $\ell > 0$. 
This is a partial generalization of a result by Buan-Iyama-Reiten-Scott \cite{BIRSc}.

Let $Q$ be a finite acyclic non-Dynkin quiver.  
They associated  a finite dimensional  factor algebra $\Pi(Q)_{w} := \Pi(Q)/I_{w}$  to an element $w$ of the Coxeter group $W_{Q}$ 
and showed that it is an IG-algebra. 
Let $c \in W_{Q}$ be a Coxeter element satisfying the condition of \cite[Definition 2.1]{Kimura-1}. 
If   $w = c^{\ell + 1}$ is a multiple of  $c$, then we have  $\Pi(Q)_{c^{\ell +1} } = \Pi(\kk Q)_{\leq \ell}$. 
Thus, in this case, our result recovers that of \cite{BIRSc}. 

Let $Q$ be a finite acyclic quiver.  
The graded and ungraded singular derived categories  of $ \Pi(Q)_{w}$ plays an important role in cluster theory. 
Kimura \cite{Kimura-1,Kimura-2} gave constructions of tilting objects in $\grSing \Pi(Q)_{w}$ when $w$ satisfies some assumptions.  
In the case where $Q$ is non-Dynkin and  $w = c^{\ell +1}$,  
since  $\gldim \kk Q \leq 1$, we can apply Theorem \ref{tilting theorem} 
to $\Pi(Q)_{c^{\ell +1} } = \Pi(\kk Q)_{\leq \ell}$ and obtain a tilting object $T$ of $\grSing \Pi(Q)_{c^{\ell + 1}}$. 
This tilting object coincides with the tilting object $M$ given in  \cite[Theorem 4.7]{Kimura-1}. 
\end{example}

\subsection{Veronese algebra}

Let $n> 0$ be a natural number. 
Recall that  the $n$-th Veronese subalgebra $A^{(n)}$ of a graded algebra $A = \bigoplus_{i \geq 0} A_{i}$ 
is defined to be the subalgebra of $A$ generated by $\{ A_{in}\mid i \in \NN\}$ with the grading $(A^{(n)})_{i} := A_{in}$. 

Before stating our result, 
it  deserves to remark that the same construction does not preserve IG-property in general. 

\begin{example}
We provide two examples. 

\begin{enumerate}[(1)]

\item
Assume that $\kk$ be a field and set  $A = \wedge \kk^{3} \times \kk[x]/(x^{5})$ where the left factor is the exterior algebra of a $3$-dimensional vector space with the grading $\deg \kk^{3} := 1$ and  the degree of $x$ is  set to be $1$. 
Then, $A$ is self-injective and in particular IG  with $\ell =4$. However, it is easy to see that $2$nd Veronese algebra $A^{(2)}$ is isomorphic to $ \sfT_{\kk}(\kk^{3})/\sfT_{\kk}(\kk^{3})_{ \geq 2} \times \kk[y]/(y^{3})$ and  is not IG.

\item 
Let $A$ be a finite dimensional graded algebra defined by a quiver
\[
\xymatrix{
1 \ar@(lu,ld)_{a} \ar@/^5pt/[r]^{b} & 2 \ar@/^5pt/[l]^{c}
}
\]
with relations $a^2=bc$, $ab=cb=ca=0$ and degrees $\deg a=1$, $\deg b= 0$, $\deg c= 2$.
It can be checked that $A$ is swg IG with the maximal degree $\ell = 2$. 
However, it can be also checked that the $2$nd Veronese algebra $A^{(2)}$ is not IG. 
\end{enumerate}
\end{example}

Contrary to this, the $n$-th Veronese subalgebra of   hwg IG-algebras   is again hwg IG provided that $n$ divides the maximal degree $\ell$. 

\begin{proposition}
If $A$ is an $(mn)$-hwg algebra, then so is the $n$-th Veronese subalgebra $A^{(n)}$.  
Moreover if $A$ is an $(mn)$-hwg IG-algebra, then so is $A^{(n)}$.
\end{proposition}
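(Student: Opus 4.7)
The plan is to exploit that, since $\ell = mn$, the Veronese functor $V_n \colon \GrMod A \to \GrMod A^{(n)}$ sending $M \mapsto \bigoplus_{i \in \ZZ} M_{in}$ rescales degrees by exactly the factor $n$ needed to convert the $\ell$-hwg witness for $A$ into an $m$-hwg witness for $A^{(n)}$. Set $\Lambda = A_0 = (A^{(n)})_0$ and note $(A^{(n)})_m = A_{mn} = A_\ell$. The functor $V_n$ is obviously exact (it is just a coproduct of degree projections, equipped with the induced $A^{(n)}$-action) so it extends to $V_n\colon \sfD(\GrMod A) \to \sfD(\GrMod A^{(n)})$, and $V_n(A) = A^{(n)}$.

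By Proposition \ref{proposition hwg algebra}, the hypothesis that $A$ is right $(mn)$-hwg supplies the canonical isomorphism
\[
\phi_\ell\colon A \xrightarrow{\ \sim\ } \grRHom_\Lambda(A, A_\ell)(-\ell) \text{ in } \sfD(\GrMod A),
\]
together with the vanishing $\grExt^{>0}_\Lambda(A, A_\ell) = 0$, so that for any $\Lambda$-injective resolution $J^\bullet$ of $A_\ell$ the target is modeled by $\grHom_\Lambda(A, J^\bullet)(-\ell)$. A direct graded-component check using the numerical coincidence $\ell - in = (m-i)n$ produces an identification of complexes of graded $A^{(n)}$-modules
\[
V_n\bigl(\grHom_\Lambda(A, J^\bullet)(-\ell)\bigr) = \grHom_\Lambda(A^{(n)}, J^\bullet)(-m).
\]
Since $A^{(n)}$ is a direct summand of $A$ as a graded $\Lambda$-module, one also has $\grExt^{>0}_\Lambda(A^{(n)}, A_\ell) = 0$, and so the right-hand side represents $\grRHom_\Lambda(A^{(n)}, (A^{(n)})_m)(-m)$. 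Moreover, the naturality of the map $m \mapsto (a \mapsto ma)$ forces $V_n(\phi_\ell)$ to coincide with the canonical morphism $\phi_m$ for $A^{(n)}$, so $\phi_m$ is an isomorphism and $A^{(n)}$ is right $m$-hwg by Proposition \ref{proposition hwg algebra}. Running the same argument on $A^{\op}$, using $(A^{\op})^{(n)} = (A^{(n)})^{\op}$, gives the left hwg statement, establishing the first assertion.

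For the IG part, the plan is to verify the conditions of Theorem \ref{main theorem}\,(4) for $A^{(n)}$. Condition $(4\text{-i})$ holds because $(A^{(n)})_m = A_\ell$ is already a cotilting $\Lambda$-bimodule by Theorem \ref{main theorem} applied to $A$; conditions $(4\text{-ii})$ and $(4\text{-iii})$ (equivalently, $(4\text{-ii+iii})$ of Remark \ref{remark to main theorem}) were produced in the previous paragraph by restricting the $\Lambda$-$A$-bimodule isomorphism for $A$ along $V_n$. The algebra $A^{(n)}$ is Noetherian since it is finitely graded with Noetherian base $\Lambda = A_0$ and each $(A^{(n)})_i = A_{in}$ finitely generated over $\Lambda$. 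Theorem \ref{main theorem} then delivers $A^{(n)}$ as an hwg IG-algebra.

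The main delicacy of the whole argument is the graded-component matching that identifies $V_n(\phi_\ell)$ with $\phi_m$; once the numerical coincidence $\ell = mn$ is in place and one has the vanishing of higher $\grExt$ over $\Lambda$, everything else is formal bookkeeping inside $\sfD(\GrMod A^{(n)})$. I do not anticipate any issue beyond careful accounting of the grading shifts and the descent of the bimodule structure along $V_n$.
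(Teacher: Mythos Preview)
Your proof is correct and follows essentially the same approach as the paper's: the paper's proof consists of the single sentence ``The assertions follow from Proposition \ref{proposition hwg algebra} and Theorem \ref{main theorem},'' and you have unpacked exactly what this means by using the characterization $A \cong \grRHom_{\Lambda}(A,A_{\ell})(-\ell)$ from Proposition \ref{proposition hwg algebra}(2), restricting to degrees divisible by $n$ via the Veronese functor, and then invoking condition (4) of Theorem \ref{main theorem} for the IG part. The degree bookkeeping you carry out (that $V_n$ of the canonical $\phi_{\ell}$ is the canonical $\phi_{m}$ for $A^{(n)}$) is precisely the content behind the paper's one-line citation.
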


\begin{proof}
The assertions follow from Proposition \ref{proposition hwg algebra} and Theorem \ref{main theorem}.
\end{proof}

\subsection{Tensor products and Segre products}\label{subsection-tensor}

In this Section \ref{subsection-tensor},  we consider the tensor product of given two graded algebras.
In the rest of this subsection, for simplicity we assume that $\kk$ is a field
and  graded algebras $A,B$ are finite dimensional.

\subsubsection{Tensor product}
 
Let $H:=A\otimes_{\kk}B$ be the tensor product algebra of $A$ and $B$ with the grading 
$
H_k:= \bigoplus_{k = i + j} A_{i} \otimes_{\kk} B_{j}. 
$
We note that if we set the maximal degrees of $A$ and $B$ to be $\ell_{A}$ and $\ell_{B}$, then the maximal degree $\ell_{H}$ of $H$ is 
$\ell_{A} + \ell_{B}$. 
It is known that IG-property preserved by this construction. 
More precisely the following assertion holds. 

\begin{proposition}[{\cite[Proposition 2.2]{AR}}]\label{Proposition 2.2 of AR}
$H$ is IG if and only if so are $A$ and $B$.
\end{proposition}

We prove that hwg IG-property is also preserved by this construction.

\begin{proposition}\label{tensor-preserve-hwg}
In the above setting we have the following assertions. 
\begin{enumerate}[(1)]
\item 
 $A$ and $B$ are  right hwg if and only if so is  $H$.
\item 
 $A$ and $B$ are  hwg  IG if and only  if so is $H$.
\end{enumerate}
\end{proposition}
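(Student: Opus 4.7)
The plan is to establish (1) via a Künneth-type computation of $\grRHom_H(H_0,H)$ and then deduce (2) by combining (1) with Proposition~\ref{Proposition 2.2 of AR}.

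For (1), first I would fix minimal graded projective resolutions $P^\bullet \to A_0$ over $A$ and $Q^\bullet \to B_0$ over $B$; each term is a finite direct sum of shifts $A(-i)$ (resp.\ $B(-j)$). Since $\kk$ is a field, tensoring gives a graded projective resolution $P^\bullet \otimes_\kk Q^\bullet \to H_0 = A_0 \otimes_\kk B_0$ over $H$ (using $A(-i) \otimes_\kk B(-j) \cong H(-(i+j))$ as graded $H$-modules and flatness over $\kk$). Applying $\grHom_H(-, A \otimes_\kk B)$ together with the canonical bigraded isomorphism
\[
\grHom_H\bigl(P^\bullet \otimes_\kk Q^\bullet,\, A \otimes_\kk B\bigr) \;\cong\; \grHom_A(P^\bullet, A)\otimes_\kk \grHom_B(Q^\bullet, B),
\]
the Künneth formula over a field yields
\[
\grExt_H^n(H_0, H)_k \;\cong\; \bigoplus_{\substack{p+q=n \\ i+j=k}} \grExt_A^p(A_0,A)_i \otimes_\kk \grExt_B^q(B_0,B)_j.
\]

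I would then invoke Proposition~\ref{proposition hwg algebra}: right hwg-ness for $A$ is equivalent to the vanishing $\grExt_A^p(A_0,A)_i = 0$ for all $p \ge 0$ and all $i \neq \ell_A$ (and similarly for $B$). The ``if'' direction of (1) is immediate: if both tensor factors vanish outside internal degrees $\ell_A$ and $\ell_B$, then the Künneth sum vanishes unless $k=\ell_A+\ell_B = \ell_H$, so $H$ is right hwg. For the converse, note that $\grHom_A(A_0, A)_{\ell_A} \cong A_{\ell_A}$, since a graded right $A$-module map $A_0 \to A(\ell_A)$ is determined by an element $x \in A_{\ell_A}$ and the condition $x \cdot A_{\ge 1}=0$ is automatic because $A_{>\ell_A}=0$; in particular $A_{\ell_A}, B_{\ell_B}$ are nonzero. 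If $A$ were not right hwg, there would exist $p$ and $i \neq \ell_A$ with $\grExt_A^p(A_0,A)_i \neq 0$, and tensoring with the nonzero $\grHom_B(B_0,B)_{\ell_B} = B_{\ell_B}$ produces a nonzero summand of $\grExt_H^p(H_0,H)_{i+\ell_B}$ with $i+\ell_B \neq \ell_H$, contradicting right hwg-ness of $H$. Hence both $A$ and $B$ are right hwg.

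For (2), combining (1) with its left-module analogue (obtained by passing to opposite algebras, using that $H^{\op} \cong A^{\op} \otimes_\kk B^{\op}$) shows $H$ is hwg iff both $A$ and $B$ are hwg. Proposition~\ref{Proposition 2.2 of AR} gives the matching equivalence for the IG property, and conjoining the two proves (2). The main technical point that must be verified carefully is the Künneth identification: one must check that $P^\bullet \otimes_\kk Q^\bullet$ is indeed a graded projective resolution of $H_0$ over $H$ and that the Hom-tensor decomposition respects both the cohomological and the internal $\ZZ$-gradings. Everything else is bookkeeping against Proposition~\ref{proposition hwg algebra}.
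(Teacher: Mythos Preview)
Your proof is correct. The approach differs from the paper's in which characterization of right hwg is used: the paper works with condition~(2) of Proposition~\ref{proposition hwg algebra}, namely that the canonical morphism $\phi_{\ell}\colon A \to \grRHom_{A_0}(A,A_{\ell})(-\ell)$ is an isomorphism, establishes via Lemma~\ref{bimod-iso} that $\phi_H$ is identified with $\phi_A \otimes \phi_B$, and then invokes Lemma~\ref{tensor isomorphism lemma} to conclude that $\phi_H$ is an isomorphism iff both $\phi_A$ and $\phi_B$ are. You instead use the defining condition~(1), computing $\grExt_H^{\ast}(H_0,H)$ directly by a K\"unneth argument on projective resolutions and reading off the vanishing in internal degrees $\neq \ell_H$. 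Your converse uses the concrete observation $\grHom_A(A_0,A)_{\ell_A}\cong A_{\ell_A}\neq 0$ in place of Lemma~\ref{tensor isomorphism lemma}. Both routes rest on the same underlying K\"unneth isomorphism over a field; yours is slightly more hands-on and avoids the auxiliary Lemma~\ref{tensor isomorphism lemma}, while the paper's morphism-level argument is a bit more conceptual and reusable (it is recycled in Proposition~\ref{tensor-preserve-cotilt} and Proposition~\ref{Segre-preserve-hwg}). For part~(2) both you and the paper combine~(1) with Proposition~\ref{Proposition 2.2 of AR}.
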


We leave the proof of the following lemmas to the readers. 

\begin{lemma}\label{bimod-iso}
Let $\Lambda,\Lambda'$ be finite dimensional algebras over a field $\kk$, 
$E,F$ be finitely generated $\Lambda$-modules and   
 $E',F'$ be finitely generated $\Lambda'$-modules. 
Then  
there is an isomorphism
\[
\RHom_{\Lambda\otimes_{\kk}\Lambda'}(E\otimes_{\kk}E',F\otimes_{\kk}F') \cong \RHom_{\Lambda}(E,F)\otimes_{\kk}\RHom_{\Lambda'}(E',F') 
\]
in $\mathsf{D}(\Mod\kk)$.
\end{lemma}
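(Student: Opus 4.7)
The plan is to realize both sides of the desired isomorphism by explicit complexes built from injective resolutions. Using the hypothesis $\injdim_{\Lambda}F < \infty$, I would choose a bounded injective resolution $F \to I^{\bullet}$ over $\Lambda$, and any (bounded below) injective resolution $F' \to J^{\bullet}$ over $\Lambda'$. Since $\Lambda$ and $\Lambda'$ are finite dimensional over the field $\kk$, every injective $\Lambda$-module (resp.\ $\Lambda'$-module) is a direct summand of a product of copies of $\tuD(\Lambda) = \Hom_{\kk}(\Lambda,\kk)$ (resp.\ $\tuD(\Lambda')$). Combined with the canonical isomorphism $\tuD(\Lambda) \otimes_{\kk} \tuD(\Lambda') \cong \tuD(\Lambda \otimes_{\kk} \Lambda')$ (valid because both factors are finite dimensional), this shows that each term $I^{p} \otimes_{\kk} J^{q}$ is injective over $\Lambda \otimes_{\kk} \Lambda'$.

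Next I would check that $\tot(I^{\bullet} \otimes_{\kk} J^{\bullet})$ is quasi-isomorphic to $F \otimes_{\kk} F'$. Since $\kk$ is a field, $\otimes_{\kk}$ is exact, so the rows and columns of the augmented double complex are exact. Boundedness of $I^{\bullet}$, which is exactly where $\injdim_{\Lambda}F < \infty$ is used, ensures convergence of the standard double-complex spectral sequence, yielding $\tuH^{0}(\tot(I^{\bullet} \otimes_{\kk} J^{\bullet})) \cong F \otimes_{\kk} F'$ and vanishing in other degrees. Thus $\tot(I^{\bullet} \otimes_{\kk} J^{\bullet})$ is a genuine injective resolution of $F \otimes_{\kk} F'$ over $\Lambda \otimes_{\kk} \Lambda'$.

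The remainder relies on a Künneth-type identity for $\Hom$: for finitely generated (hence finite dimensional) $E$, $E'$ and arbitrary modules $I$, $J$, the natural map
\[
\Hom_{\Lambda}(E, I) \otimes_{\kk} \Hom_{\Lambda'}(E', J) \longrightarrow \Hom_{\Lambda \otimes_{\kk} \Lambda'}(E \otimes_{\kk} E', I \otimes_{\kk} J)
\]
is an isomorphism. This is standard: it is obvious when $E = \Lambda$ and $E' = \Lambda'$, extends to finitely generated free modules by additivity, and then to finitely presented modules via the five lemma applied to a finite presentation. Applying this degree-wise to $I^{\bullet}$ and $J^{\bullet}$ gives
\[
\Hom_{\Lambda \otimes_{\kk} \Lambda'}(E \otimes_{\kk} E', \tot(I^{\bullet} \otimes_{\kk} J^{\bullet})) \cong \tot(\Hom_{\Lambda}(E, I^{\bullet}) \otimes_{\kk} \Hom_{\Lambda'}(E', J^{\bullet})),
\]
whose right-hand side represents $\RHom_{\Lambda}(E, F) \otimes_{\kk} \RHom_{\Lambda'}(E', F')$, proving the lemma.

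The main obstacle is the second step: verifying that the totalization of a double complex of injectives is genuinely an injective resolution of the tensor product, and in particular that the bicomplex spectral sequence converges correctly. This is precisely the point where the assumption $\injdim_{\Lambda}F < \infty$ and the finite dimensionality of $\Lambda$ and $\Lambda'$ over $\kk$ enter in an essential way; the subsequent Künneth identity for $\Hom$ is routine.
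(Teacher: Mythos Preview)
Your proof is correct. In the main body of the paper the authors simply write ``We leave the proof of the following lemmas to the readers,'' so there is no official argument to compare against. However, a draft version that survives after \verb|\end{document}| in the source gives a three-line proof by iterated adjunction:
\[
\RHom_{\Lambda\otimes_{\kk}\Lambda'}(E\otimes_{\kk}E',F\otimes_{\kk}F')
\cong \RHom_{\Lambda}\bigl(E,\RHom_{\Lambda'}(E',F\otimes_{\kk}F')\bigr)
\cong \RHom_{\Lambda}\bigl(E,F\otimes_{\kk}\RHom_{\Lambda'}(E',F')\bigr),
\]
and then one more step pulling the $\kk$-tensor out of $\RHom_{\Lambda}(E,-)$. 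Your explicit-resolution approach and this adjunction approach are genuinely different in style: yours builds an honest injective resolution of $F\otimes_{\kk}F'$ and applies a K\"unneth identity for $\Hom$, whereas the draft version hides all the complex-level bookkeeping inside derived adjunction. Your route has the advantage of making completely transparent why everything works over a field; the adjunction route is shorter but each of the three isomorphisms needs its own justification (finite presentation of $E,E'$, exactness of $\otimes_{\kk}$, etc.).

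One small imprecision worth noting: you locate the use of $\injdim_{\Lambda}F<\infty$ in the convergence of the spectral sequence for $\tot(I^{\bullet}\otimes_{\kk}J^{\bullet})$. Over a field the K\"unneth formula gives that quasi-isomorphism unconditionally, with no boundedness needed. What finiteness actually buys you in your argument is at the earlier step, ensuring that the terms $I^{p}\otimes_{\kk}J^{q}$ are injective over $\Lambda\otimes_{\kk}\Lambda'$: an arbitrary injective $\Lambda$-module is only a summand of a \emph{product} of copies of $\tuD(\Lambda)$, and products do not commute with $\otimes_{\kk}$. Since $F,F'$ are finitely generated over finite-dimensional algebras, you may (and should) choose the minimal injective resolutions, whose terms are finite-dimensional; then each $I^{p}\otimes_{\kk}J^{q}$ is a finite direct sum of copies of $\tuD(\Lambda)\otimes_{\kk}\tuD(\Lambda')\cong\tuD(\Lambda\otimes_{\kk}\Lambda')$ and is therefore injective. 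With this adjustment your argument goes through (and in fact shows that the hypothesis $\injdim_{\Lambda}F<\infty$ is not essential here).
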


\begin{lemma}\label{tensor isomorphism lemma} 
Let $f: U \to V$ and $f' : U' \to V$ be morphisms in $\sfD^{\mrb}(\mod \kk)$. 
Then, the tensor product $f\otimes f': U \otimes_{\kk} U' \to V \otimes_{\kk} V'$ is an isomorphism if and only if 
so are $f$ and $f'$. 
\end{lemma}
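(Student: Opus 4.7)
The plan is to reduce to an elementary statement about graded vector spaces and their tensor products, exploiting that $\kk$ is a field.

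First I would observe that the \textbf{if} direction is immediate from functoriality of $\otimes_{\kk}$, so the content lies in the converse. For the \textbf{only if} direction, the key simplification is that since $\kk$ is a field, every object of $\sfD^{\mrb}(\mod\kk)$ is quasi-isomorphic to its cohomology equipped with zero differentials, i.e.\ is formal. Thus up to isomorphism I may replace $U,V,U',V'$ by bounded $\ZZ$-graded finite-dimensional $\kk$-vector spaces, and the morphisms $f,f'$ by degree-preserving $\kk$-linear maps. Moreover the Künneth formula over a field gives a canonical isomorphism $\tuH^{\bullet}(U\otimes_{\kk}U')\cong \tuH^{\bullet}(U)\otimes_{\kk}\tuH^{\bullet}(U')$ of graded vector spaces identifying the map induced by $f\otimes f'$ with $\tuH^{\bullet}(f)\otimes \tuH^{\bullet}(f')$. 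Since a morphism in $\sfD^{\mrb}(\mod\kk)$ is an isomorphism exactly when it induces an isomorphism on cohomology, the lemma is reduced to the following purely linear-algebraic assertion: for graded linear maps $g:M\to N$ and $g':M'\to N'$ of bounded graded $\kk$-vector spaces, $g\otimes g'$ is an isomorphism iff so are $g$ and $g'$ (implicitly assuming the spaces are nonzero, as the statement is used in that context).

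The remaining step is to verify this linear-algebra fact directly. Working degree-wise and choosing bases $\{e_i\}\cup\{f_j\}$ of $M$ adapted to the decomposition $\ker g \oplus (\text{complement})$ (and similarly for $M'$), one computes
\[
\ker(g\otimes g')=\ker(g)\otimes M'+M\otimes\ker(g'),\qquad
\image(g\otimes g')=\image(g)\otimes\image(g'),
\]
both equalities being valid because $\kk$ is a field. If $g\otimes g'$ is injective and $M,M'\neq 0$, the first formula forces $\ker g=0=\ker g'$; if $g\otimes g'$ is surjective and $N,N'\neq 0$, the second formula together with the dimension count $\dim\image g\cdot\dim\image g'=\dim N\cdot\dim N'$ forces $\image g=N$ and $\image g'=N'$. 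Combining, $g$ and $g'$ are isomorphisms.

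The only mild obstacle is the degeneracy case where one of $U,U',V,V'$ is the zero object, in which the statement as written is vacuously subtle; however, in the intended application (the proof of Proposition~\ref{tensor-preserve-hwg} via Lemma~\ref{bimod-iso}) all four complexes are nonzero, so this point does not impede the use of the lemma. Otherwise the proof is purely formal, combining formality over a field, the Künneth isomorphism, and the exactness of $\otimes_{\kk}$.
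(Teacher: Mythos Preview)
Your argument is correct and is the natural approach; the paper itself omits the proof entirely, leaving it to the reader, so there is nothing to compare against. Your observation about the degenerate zero-object case is apt and is indeed irrelevant in the applications.
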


As a consequence we deduce the following proposition concerning on cotilting bimodules.

\begin{proposition}\label{tensor-preserve-cotilt}
Let $\Lambda,\Lambda'$ be finite dimensional algebras over a field $\kk$, $C$ a  bimodule over $\Lambda$ and $C'$ a bimodule over $\Lambda'$.
Then 
$C$ and $C'$ are cotilting if and only if 
 $C\otimes_{\kk}C'$ is a cotilting bimodule over $\Lambda\otimes_{\kk}\Lambda'$.
\end{proposition}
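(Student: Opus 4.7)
The plan is to verify the four defining conditions of a cotilting bimodule (Definition~\ref{definition of cotilting modules}) for $C \otimes_{\kk} C'$ over $\Gamma := \Lambda \otimes_{\kk} \Lambda'$, in both directions. The central tool is a Künneth-type formula
\[
\RHom_{\Gamma}(C \otimes_{\kk} C',\, C \otimes_{\kk} C') \cong \RHom_{\Lambda}(C,C) \otimes_{\kk} \RHom_{\Lambda'}(C',C'),
\]
which is precisely Lemma~\ref{bimod-iso} once the relevant injective dimensions are known to be finite, and which in general can be produced directly by tensoring finitely generated projective resolutions (legitimate because $\kk$ is a field, so $\otimes_{\kk}$ is exact and preserves projectivity, and $P \otimes_{\kk} P'$ is finitely generated projective over $\Gamma$ whenever $P,P'$ are so over $\Lambda,\Lambda'$).

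For the forward direction, assume $C$ and $C'$ are cotilting. Finite generation of $C \otimes_{\kk} C'$ on both sides is immediate. To see $\injdim_{\Gamma}(C \otimes_{\kk} C') < \infty$, I would tensor finite injective resolutions of $C$ and $C'$ over $\Lambda$ and $\Lambda'$: the tensor product over $\kk$ of a finitely generated injective $\Lambda$-module and an injective $\Lambda'$-module is again injective over $\Gamma$, and exactness of $\otimes_{\kk}$ makes the tensor of resolutions a resolution. The Künneth isomorphism above then collapses the right-hand side to $\Lambda \otimes_{\kk} \Lambda' = \Gamma$, concentrated in degree $0$, which simultaneously supplies the Ext-vanishing of condition (3) and the endomorphism identification of condition (4). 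The opposite-sided versions follow by applying the argument to $\Lambda^{\op}, \Lambda'^{\op}, C^{\op}, C'^{\op}$ via $\Gamma^{\op} = \Lambda^{\op} \otimes_{\kk} \Lambda'^{\op}$.

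For the converse, assume $C \otimes_{\kk} C'$ is cotilting; in particular $C, C' \neq 0$. Condition (1) is trivial since $\Gamma$ is finite-dimensional over $\kk$. For condition (2), for each simple $\Lambda$-module $S$, the resolution-level Künneth isomorphism yields
\[
\Ext^{n}_{\Gamma}(S \otimes_{\kk} \Lambda',\, C \otimes_{\kk} C') \cong \Ext^{n}_{\Lambda}(S, C) \otimes_{\kk} C',
\]
using $\Ext^{>0}_{\Lambda'}(\Lambda', C') = 0$. The left side vanishes for $n \gg 0$ by finiteness of $\injdim_{\Gamma}(C \otimes C')$, so $\Ext^{n}_{\Lambda}(S, C) = 0$ for $n \gg 0$ because $C' \neq 0$; finiteness of the set of simples then yields $\injdim_{\Lambda} C < \infty$. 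Conditions (3) and (4) for $C$ emerge similarly: Künneth decomposition of $\Ext^{n}_{\Gamma}(C \otimes C', C \otimes C')$ isolates $\Ext^{n}_{\Lambda}(C,C) \otimes_{\kk} \Hom_{\Lambda'}(C',C')$ as a summand that must vanish for $n > 0$, and since $\id \in \Hom_{\Lambda'}(C',C')$ is nonzero this forces $\Ext^{>0}_{\Lambda}(C,C) = 0$; for the $n=0$ assertion, the natural map $\Lambda \otimes_{\kk} \Lambda' \to \Hom_{\Gamma}(C \otimes C', C \otimes C')$ factors as the tensor of the two individual natural maps, and a tensor product of $\kk$-linear maps between nonzero finite-dimensional spaces is an isomorphism if and only if each factor is (since over a field, $\ker(f \otimes g) = \ker f \otimes V' + V \otimes \ker g$). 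The opposite-sided conditions follow by symmetry.

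The main technical obstacle is producing the Künneth formula in a form applicable to the backward direction without circularity, since Lemma~\ref{bimod-iso} requires prior finiteness of an injective dimension — which is exactly one of the conclusions sought. My plan therefore constructs it at the level of finitely generated projective resolutions, where exactness of $\otimes_{\kk}$ over a field and the preservation of finitely generated projectivity make the argument entirely elementary; the rest is a careful bookkeeping of which summand of the Künneth sum detects which defining property of a cotilting bimodule.
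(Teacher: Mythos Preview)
Your proof is correct and follows the same overall strategy as the paper: establish the K\"unneth-type isomorphism for $\RHom$ and then use that a tensor $f\otimes f'$ of $\kk$-linear maps is an isomorphism iff each factor is. The differences are in the execution of the converse direction. The paper first proves $\injdim_{\Lambda''}C''<\infty \Leftrightarrow \injdim_\Lambda C,\injdim_{\Lambda'}C'<\infty$ by invoking the argument of \cite[Proposition~2.2]{AR}, and only then applies Lemma~\ref{bimod-iso} (which carries the hypothesis $\injdim_\Lambda F<\infty$) together with Lemma~\ref{tensor isomorphism lemma} to the canonical map $\psi''=\psi\otimes\psi'$. You instead build the K\"unneth formula from scratch via finitely generated projective resolutions, which is legitimate over a field and sidesteps the injective-dimension hypothesis entirely; you then read off $\injdim_\Lambda C<\infty$ by testing against $S\otimes_\kk\Lambda'$ for simple $S$, and extract the $\Ext$-vanishing and endomorphism conditions by isolating summands of the K\"unneth decomposition. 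Your route is more self-contained (no appeal to \cite{AR}) and makes the avoidance of circularity explicit; the paper's route is terser but relies on the reader reproducing the \cite{AR} argument for the injective-dimension equivalence.
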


\begin{proof}
For simplicity we set $\Lambda'' := \Lambda \otimes_{\kk} \Lambda'$ and $C'' :=  C \otimes_{\kk} C'$. 
Using the  same argument with \cite[Proposition 2.2]{AR}, we can prove that 
$\injdim_{\Lambda''} C'' < \infty  $ if and only if $\injdim_{\Lambda} C< \infty, \injdim_{\Lambda'}C'< \infty$. 

We denote by  $\psi: \Lambda \to \RHom_{\Lambda}(C, C)$ the canonical morphism. 
We also denote by $\psi'$ and $\psi''$ the canonical morphisms involving $C'$ and $C''$ respectively. 
Under the isomorphism  $\RHom_{\Lambda''}(C'', C'') \cong  \RHom_{\Lambda}(C,C)\otimes_{\kk}\RHom_{\Lambda'}(C',C')$
of  Lemma \ref{bimod-iso}, $\psi''$ corresponds to $\psi \otimes \psi'$. 
\[
\begin{xymatrix}{
\Lambda \otimes_{\kk} \Lambda' \ar[rr]^{\psi \otimes \psi' \ \ \ \ \ \ \ \ \ \ \ \ \ \ \ \ \ \ \ \ \ } \ar@{=}[d] && 
\RHom_{\Lambda}(C,C)\otimes_{\kk}\RHom_{\Lambda'}(C',C') \ar[d]^{\cong} \\
\Lambda'' \ar[rr]_{\psi''} && \RHom_{\Lambda''}(C'',C'')
}\end{xymatrix}
\]
Therefore, $\psi''$ is an isomorphism if and only if so are $\psi$ and $\psi'$ by Lemma \ref{tensor isomorphism lemma}. 

Since the same statements above are proved for the left module structures on $C, C'$ and $C''$, 
we see that $C''$ is cotilting precisely when $C$ and $C'$ are cotilting. 
\end{proof}

We are ready to prove Proposition \ref{tensor-preserve-hwg}. 

\begin{proof}[Proof of Proposition \ref{tensor-preserve-hwg}]
(1) 
We denote by  $\phi_{A}: A \to \grRHom_{A_{0}}(A, A_{\ell_{A}})(- \ell_{A})$ the canonical morphism, 
and likewise  by $\phi_{B}$ and $\phi_{H}$ the canonical morphisms for $B$ and $H$. 
Then from Lemma \ref{bimod-iso}, we obtain an isomorphism 
\[
\grRHom_{A_{0}}(A, A_{\ell_{A}})( -\ell_{A}) \otimes_{\kk} \grRHom_{B_{0}}(B, B_{\ell_{B}})(- \ell_{B})  
\cong \grRHom_{H_{0}}(H, H_{\ell_{H}})( -\ell_{H}), 
\]
under which $\phi_{H}$ corresponds to $\phi_{A} \otimes \phi_{B}$. 
Thus  it follows from 
  Lemma \ref{tensor isomorphism lemma} that $H$ satisfies the condition (2) of Proposition \ref{proposition hwg algebra}  if and only if so do $A$ and $B$.  
Thus the assertions follows. 

(2) 
Although the assertion follows by (1) and  Proposition \ref{Proposition 2.2 of AR}, we  provide  another proof.
It follows from  Theorem \ref{main theorem} that a finitely graded algebra $A$ is  hwg IG if and only if it is hwg and $A_{\ell_{A}}$ is cotilting.  
Thanks to  (1) and Proposition \ref{tensor-preserve-cotilt},  we can deduce the desired conclusion by checking the latter condition.  
\end{proof}

\subsubsection{Segre product} 
We recall  another product of two graded algebras $A$ and $B$, 
the Segre product $S$,   
which is defined to be $S:=\bigoplus_{i\geq0}A_i\otimes_{\kk}B_i$ with a natural multiplication. 
The grading of $S$ is defined to be $S_{i} := A_{i} \otimes B_{i}$. 

Before stating our result, we point out that even if $A$ and $B$ are IG of the same maximal degree $\ell$,  
the Segre product $S$ can fail to be IG.

\begin{example}
Let $\Lambda$ be a non-IG-algebra and $C$ be a cotilting bimodule over $\Lambda$ (e.g., $C = \tuD(\Lambda)$). 
We set $A : = \Lambda \oplus C$ with the canonical grading 
and  $B := \kk \times (\kk[x]/(x^{2}))$ with the grading $\deg x : =1$. 
Then,  since $S$ is isomorphic to  $\Lambda \times  A$, it is not IG.  
\end{example}

Contrary to this, the Segre product of two  hwg IG-algebras having the same maximal degree  is again hwg IG. 

\begin{proposition}\label{Segre-preserve-hwg}
Under the above setting, the following assertions hold.
\begin{enumerate}[(1)]
\item 
$A,B$ are right $\ell$-hwg if and only if $S$ is a right $\ell$-hwg algebra.
\item 
$A,B$ are $\ell$-hwg IG if and only if $S$ is an $\ell$-hwg IG-algebra.
\end{enumerate}
\end{proposition}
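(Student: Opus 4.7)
The plan is to mirror the proof of Proposition \ref{tensor-preserve-hwg} for tensor products, replacing the full bigrading by the diagonal that defines the Segre product. First, I would record the basic data: $S_0 = A_0 \otimes_\kk B_0$, $S_j = A_j \otimes_\kk B_j$, and $S_\ell = A_\ell \otimes_\kk B_\ell \neq 0$, so $S$ indeed has maximal degree $\ell$.

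For (1), I would use the characterization of Proposition \ref{proposition hwg algebra}: right $\ell$-hwg for $A$ (resp. $B$, $S$) is equivalent to the canonical morphism $\phi_A : A \to \grRHom_{A_0}(A, A_\ell)(-\ell)$ (resp. $\phi_B, \phi_S$) being an isomorphism in the appropriate derived category of graded modules, which in turn is equivalent to each degree-$j$ component $\phi_A^{(j)} : A_j \to \RHom_{A_0}(A_{\ell - j}, A_\ell)$ being an isomorphism in $\sfD^{\mrb}(\mod \kk)$. Applying Lemma \ref{bimod-iso} in each degree with $\Lambda = A_0$, $\Lambda' = B_0$, $E = A_{\ell-j}$, $F = A_\ell$, $E' = B_{\ell-j}$, $F' = B_\ell$ yields an isomorphism
\[
\RHom_{S_0}(S_{\ell - j}, S_\ell) \cong \RHom_{A_0}(A_{\ell - j}, A_\ell) \otimes_\kk \RHom_{B_0}(B_{\ell - j}, B_\ell)
\]
in $\sfD^{\mrb}(\mod S_0)$, under which the degree-$j$ component of $\phi_S$ corresponds to the tensor product $\phi_A^{(j)} \otimes \phi_B^{(j)}$. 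Applying Lemma \ref{tensor isomorphism lemma} degree-wise then delivers (1).

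For (2), I would combine (1) with Theorem \ref{main theorem}: a right hwg algebra is IG precisely when its top-degree component is a cotilting bimodule over the degree-zero subalgebra. Thus, given (1), the equivalence in (2) reduces to showing that $S_\ell = A_\ell \otimes_\kk B_\ell$ is a cotilting bimodule over $S_0 = A_0 \otimes_\kk B_0$ if and only if both $A_\ell$ over $A_0$ and $B_\ell$ over $B_0$ are cotilting. This is exactly the content of Proposition \ref{tensor-preserve-cotilt}.

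The main obstacle is the ``only if'' direction in (1). Unlike the tensor algebra $H$, which has all bidegrees available, the Segre product retains only diagonal information $A_j \otimes_\kk B_j$; when some $A_j$ or $B_j$ vanishes for $0 < j < \ell$, the corresponding component $\phi_S^{(j)}$ becomes a trivial isomorphism between zero objects, and Lemma \ref{tensor isomorphism lemma} does not immediately force $\phi_A^{(j)}$ and $\phi_B^{(j)}$ to be isomorphisms. To handle this, I would invoke the swg part of the hwg hypothesis on $S$ together with the symmetry encoded in the isomorphism $\phi_S$ between degrees $j$ and $\ell-j$ (via cotiltingness of $S_\ell$ in the IG case, or the essentiality of the top degree in general) to match vanishing patterns across $A$ and $B$, and only then apply Lemma \ref{tensor isomorphism lemma} on the surviving components. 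Once this bookkeeping is settled, the proof of both (1) and (2) follows the tensor-product template verbatim.
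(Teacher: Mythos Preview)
Your overall strategy matches the paper's: decompose $\phi_S$ degree by degree via Lemma~\ref{bimod-iso}, then apply Lemma~\ref{tensor isomorphism lemma}, and for (2) combine with Proposition~\ref{tensor-preserve-cotilt} and Theorem~\ref{main theorem}. For the forward direction (``$A,B$ hwg $\Rightarrow$ $S$ hwg'') this is exactly what the paper does.

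You are right to flag the converse as problematic, but your proposed fix cannot succeed: the ``only if'' direction is false as stated. Take $\kk$ a field and
\[
A=\kk\oplus 0\oplus \kk y\oplus \kk z,\qquad B=\kk\oplus \kk u\oplus 0\oplus \kk w,
\]
with all products of positive-degree elements equal to zero and $\ell=3$. Then $S_i=A_i\otimes_\kk B_i$ gives $S=\kk\oplus 0\oplus 0\oplus \kk(z\otimes w)\cong\kk[t]/(t^2)$ with $\deg t=3$, which is graded Frobenius and hence $3$-hwg IG. But $A$ is not $3$-hwg: the degree-$1$ component of $\phi_A$ is the map $A_1=0\to\RHom_\kk(A_2,A_3)\cong\kk$, which is not an isomorphism. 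So neither (1) nor (2) holds in the ``only if'' direction. The underlying issue is exactly the one you identified: Lemma~\ref{tensor isomorphism lemma} fails when one of the source objects vanishes, and no amount of bookkeeping with swg or the $j\leftrightarrow\ell-j$ symmetry can recover the individual components $\phi_A^{(j)},\phi_B^{(j)}$ from the vanishing of $S_j$ and $S_{\ell-j}$. The paper's own proof applies the lemma without addressing this case; your instinct that something is missing was correct, but what is missing is a hypothesis (e.g.\ $A_i\neq 0\neq B_i$ for all $0\le i\le\ell$), not an extra argument.
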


\begin{proof}
(1) 
We denote by  $\phi_{A}: A \to \grRHom_{A_{0}}(A, A_{\ell})(- \ell)$ the canonical morphism, and likewise 
 by $\phi_{B}$ and $\phi_{S}$ the canonical morphisms for $B$ and $S$. 
Then from Lemma \ref{bimod-iso}, for $i = 0, \cdots, \ell$ we obtain an isomorphism 
\[
\RHom_{A_{0}}(A_{i}, A_{\ell}) \otimes_{\kk} \RHom_{B_{0}}(B_{i}, B_{\ell} )  
\cong \RHom_{S_{0}}(S_{i}, S_{\ell}), 
\]
under which $\phi_{H, i}$ corresponds to $\phi_{A,i} \otimes \phi_{B,i}$. 
Thus  it follows from 
  Lemma \ref{tensor isomorphism lemma} that $S$ satisfies the condition (2) of Proposition \ref{proposition hwg algebra}  if and only if so do $A$ and $B$.  
Thus the assertions follows. 

(2)
The assertion can be proved by the same argument to the second proof of Proposition \ref{tensor-preserve-hwg} (2).
\end{proof}

\section{Commutative case}\label{commutative case}

Foxby \cite{Foxby} and Reiten \cite{Reiten} (see also \cite[3.7]{FFGR})
showed that 
if a local commutative graded algebra $A =A_{0} \oplus A_{1}$ is IG, 
then it is hwg. 
It is worth noting that 
in commutative ring theory, 
an IG-algebra is called a Gorenstein algebra. 
A cotilting module is called  a canonical module 
and it has alias such as  a dualizing module and a Gorenstein module of rank $1$. 

The aim of Section \ref{commutative case} is to generalize  
the result by Foxby and Reiten to   any   commutative  finitely graded algebras.

\begin{theorem}\label{commutative Gorenstein theorem}
A commutative local finitely graded Gorenstein algebra 
$A = \bigoplus_{i= 0}^{\ell} A_{i}$ is hwg. 
\end{theorem}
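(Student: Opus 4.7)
The strategy is to verify conditions (4-i), (4-ii), (4-iii) of Theorem \ref{main theorem} using the classical theory of canonical modules for commutative Cohen--Macaulay local rings.

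First I would reduce to the case that $A_0$ is complete local. Completing $A_0$ at its maximal ideal gives a faithfully flat extension $A_0 \to \widehat{A_0}$, and $\widehat{A} := A \otimes_{A_0} \widehat{A_0}$ is a commutative local finitely graded IG algebra with the same maximal degree $\ell$ and with degree-$0$ part $\widehat{A_0}$ complete local. Because the canonical morphism $A \to \grHom_{A_0}(A, A_\ell)(-\ell)$ and the groups $\grExt_{A_0}^{>0}(A, A_\ell)$ involve only bounded complexes of finitely generated $A_0$-modules, faithfully flat descent along $A_0 \to \widehat{A_0}$ lets me check the conditions of Theorem \ref{main theorem}(4) after completion. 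So I may assume $A_0$ is complete local.

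Next I would show that $A_0$ is itself Cohen--Macaulay. Since $A$ is a module-finite extension of the local ring $A_0$, one has $\dim A = \dim A_0$, and Gorensteinness gives $\depth_{A_0} A = \depth_A A = \dim A$, so $A$ is a maximal Cohen--Macaulay (MCM) $A_0$-module. The graded decomposition $A = A_0 \oplus A_{\geq 1}$ realizes $A_0$ as an $A_0$-direct summand of $A$; since the depth of a direct sum is the minimum of the depths, one deduces $\depth A_0 \geq \dim A_0$, so $A_0$ is CM. Being complete local CM, $A_0$ admits a canonical module $\omega_{A_0}$.

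I would then invoke the standard theory of canonical modules under a finite Gorenstein extension. Placing $\omega_{A_0}$ in graded degree $0$, the graded module $\omega_A := \grHom_{A_0}(A, \omega_{A_0})$ is the graded canonical module of $A$, and the MCM hypothesis gives $\grExt_{A_0}^{>0}(A, \omega_{A_0}) = 0$. Because $A$ is graded Gorenstein, $\omega_A \cong A(c)$ for some integer $c$; the module $\grHom_{A_0}(A, \omega_{A_0})$ is supported in graded degrees $[-\ell, 0]$ whereas $A(c)$ is supported in $[-c, \ell - c]$, which forces $c = \ell$. Comparing degree $0$ then yields $A_\ell \cong \omega_{A_0}$, so $A_\ell$ is a canonical module of $A_0$ and in particular a cotilting bimodule, establishing (4-i). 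The isomorphism $A \cong \grHom_{A_0}(A, A_\ell)(-\ell)$ (automatically two-sided in the commutative setting) gives (4-ii), and the MCM vanishing $\grExt_{A_0}^{>0}(A, A_\ell) = 0$ gives (4-iii). Theorem \ref{main theorem} then produces the conclusion that $A$ is hwg.

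The main technical obstacle will be organizing the graded versions of these commutative-algebra facts, in particular the identification $\omega_A \cong \grHom_{A_0}(A, \omega_{A_0})$ as \emph{graded} $A$-modules together with the correct pinning-down of the shift $c = \ell$; once the graded canonical module is identified, the bimodule isomorphism and Ext vanishing fall out automatically from the MCM property, and Theorem \ref{main theorem} handles the rest.
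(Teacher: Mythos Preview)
Your argument is correct and takes a genuinely different route from the paper. The paper works directly with the minimal graded injective resolution of $A$: it invokes the graded version of Bass's theorem that for a Gorenstein local ring $\sum_i \mu^{\height \frkp}_i(\frkp,A)=1$, reduces modulo a homogeneous regular sequence (which, since $A$ is finitely graded, lies in degree $0$) to the zero-dimensional case to pin down that the single nonvanishing Bass number at $\frkm$ sits in degree $\ell$, and then uses a graded ``one-up'' lemma to propagate this to every graded prime.

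Your approach instead passes through canonical-module theory: you show $A_0$ is CM (via the splitting $A_0\mid A$), identify $\grHom_{A_0}(A,\omega_{A_0})$ with the canonical module of $A$, use that $A$ is Gorenstein local to force it to be $A(c)$ for some $c$, and read off $c=\ell$ and $A_\ell\cong\omega_{A_0}$ by degree comparison, after which conditions (4-i)--(4-iii) of Theorem~\ref{main theorem} are immediate. Two steps you pass over quickly are (a) that an ungraded isomorphism $\omega_A\cong A$ upgrades to a graded one $\omega_A\cong A(c)$ --- this follows since $A$ is \emph{local}, so $\omega_A/\frkm\omega_A$ is $k$ concentrated in a single degree and graded Nakayama gives a surjection $A(c)\twoheadrightarrow\omega_A$ which is then an isomorphism --- and (b) the faithfully flat descent of the cotilting conditions along completion; both are standard. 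Your route has the pleasant side effect of exhibiting $A_\ell$ explicitly as the canonical module of $A_0$, while the paper's argument avoids completion and the existence theorem for canonical modules by computing the injective resolution of $A$ directly.
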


\def\Kdim{{\textup{Kdim}}}
\def\frkm{{\mathfrak{m}}}
\def\depth{{\textup{depth}}}

The symbol $\grSpec A$ denotes the set of graded prime ideals of $A$. 
Since $A$ is finitely graded, 
the ideal $A_{+} := \bigoplus_{i \geq 1} A_{i}$ is nilpotent 
and hence contained in every graded prime ideal. 
Therefore a graded prime ideal $\frkp$ of $A$ is of the form $\frkp = \frkp_{0} \oplus A_{+}$. 
We have $A/\frkp = A_{0}/\frkp_{0}$ and hence  $\frkp_{0}$ is a prime ideal of $A_{0}$.

We denote by $E_{A}(M)$ the injective envelope of a graded $A$-module $M$.

\begin{lemma}
For $\frkp \in \grSpec A$, we have 
$E_{A}(A/\frkp) \cong \grHom_{A_{0}}(A, E_{A_{0}}(A_{0}/\frkp_{0}))$
\end{lemma}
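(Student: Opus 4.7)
The plan is to exhibit $I := \grHom_{A_{0}}(A, E_{A_{0}}(A_{0}/\frkp_{0}))$ as a graded injective $A$-module containing $A/\frkp$ as an essential submodule, which is exactly what it means for $I$ to be a graded injective envelope of $A/\frkp$. This breaks into three sub-steps: showing $I$ is graded injective, constructing a monomorphism $\iota\colon A/\frkp \hookrightarrow I$, and checking essentiality of $\iota$.

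The graded injectivity of $I$ is immediate: for any injective $A_{0}$-module $J$, the functor $\Hom_{\GrMod A}(-, \grHom_{A_{0}}(A, J))$ coincides with the composition of the exact functor $(-)_{0}\colon \GrMod A \to \Mod A_{0}$ with the exact functor $\Hom_{A_{0}}(-, J)$; this is the content of (\ref{20182242123}) applied to the graded injective $\frks_{0}(J)=\grHom_{A_{0}}(A,J)$, for which $\frki_{0}I = J$ and $\frki_{j}I = 0$ for $j \neq 0$. The resulting adjunction
\[
\Hom_{\GrMod A}(M, \grHom_{A_{0}}(A, E)) \cong \Hom_{A_{0}}(M_{0}, E)
\]
produces $\iota$: applied with $M = A/\frkp$ (which is concentrated in degree $0$, where it equals $A_{0}/\frkp_{0}$, since $A_{+} \subset \frkp$) and with the canonical inclusion $A_{0}/\frkp_{0} \hookrightarrow E_{A_{0}}(A_{0}/\frkp_{0})$, it gives the graded $A$-map sending $\bar{a} \in A_{0}/\frkp_{0}$ to the $A_{0}$-homomorphism $A_{0} \to E_{A_{0}}(A_{0}/\frkp_{0})$, $b \mapsto \overline{ab}$. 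Under the identification $I_{0} = \Hom_{A_{0}}(A_{0}, E_{A_{0}}(A_{0}/\frkp_{0})) \cong E_{A_{0}}(A_{0}/\frkp_{0})$, $\iota$ is the canonical inclusion in degree $0$ and zero elsewhere; in particular it is a monomorphism.

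The substantive step is essentiality. I would verify it on homogeneous elements: a nonzero $f \in I_{i} = \Hom_{A_{0}}(A_{-i}, E_{A_{0}}(A_{0}/\frkp_{0}))$ forces $-\ell \leq i \leq 0$, so I can pick $a \in A_{-i}$ with $f(a) \neq 0$, and then $f \cdot a \in I_{0}$ corresponds to the nonzero element $f(a) \in E_{A_{0}}(A_{0}/\frkp_{0})$. This reduces essentiality to the degree-$0$ case, which is then supplied by the classical essentiality of $A_{0}/\frkp_{0}$ inside $E_{A_{0}}(A_{0}/\frkp_{0})$ as $A_{0}$-modules: some $a_{0} \in A_{0}$ brings $f(a)\, a_{0}$ into $A_{0}/\frkp_{0}$, nonzero, and therefore into the image of $\iota$.

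The main (mild) obstacle is keeping the grading conventions straight—specifically, the sign in $\grHom_{A_{0}}(A,-)_{i} = \Hom_{A_{0}}(A_{-i}, -)$ and the compatibility of the right $A$-action with the degree-$0$ $A_{0}$-action on $I_{0}$—so that the "slide into degree $0$" reduction is unambiguous. Once that bookkeeping is set, neither commutativity nor locality of $A$ is actually used in the argument.
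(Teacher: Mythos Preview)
Your proposal is correct and follows exactly the approach sketched in the paper's one-line proof: you verify that $\grHom_{A_{0}}(A, E_{A_{0}}(A_{0}/\frkp_{0}))$ is graded injective and contains $A/\frkp = A_{0}/\frkp_{0}$ as an essential submodule, supplying the adjunction and essentiality details that the paper leaves implicit. Your observation that commutativity and locality are not needed is also accurate.
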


\begin{proof}
Since $\grHom_{A_{0}}(A, E_{A_{0}}(A_{0}/\frkp_{0}))$ 
is a graded injective module containing $A/\frkp = A_{0}/\frkp_{0}$ 
as an essential submodule, 
we conclude the desired result.
\end{proof}

We collect  graded versions of 
well-known results about structures of minimal injective resolutions. 
For this purpose, we recall the definition of the graded Bass number. 

\begin{definition}[The graded Bass number]
For  $\frkp \in \grSpec A$ and $M \in \GrMod A$,  
we set 
\[
\mu^{n}_{i}(\frkp, M)
 := 
\dim_{\kappa(\frkp)}
 \grExt_{A}^{n}(\kappa(\frkp), M_{\frkp})_{i}.
\]
\end{definition}

\begin{theorem}[{A graded version of \cite[Theorem 18.7]{Matsumura}}]\label{Bass injective decomposition theorem}
Let $M \in \GrMod A$ and $I^{\bullet}$ a minimal graded injective resolution of $M$. 
Then we have 
\[
\begin{split}
I^{n} 
&\cong \bigoplus_{\frkp \in \grSpec A, i \in \ZZ} E_{A}(A/\frkp)^{\oplus \mu^{ n }_{i} (\frkp, M)}(-i),\\ 
\frks_{i} I^{n} 
&\cong \bigoplus_{\frkp \in \grSpec A} E_{A}(A/\frkp)^{\oplus \mu^{ n }_{i} (\frkp, M)}(-i). 
\end{split}
\]
\end{theorem}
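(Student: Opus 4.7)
The plan is to adapt Matsumura's classical proof of \cite[Theorem 18.7]{Matsumura} to the $\ZZ$-graded setting. The argument uses three ingredients: a graded Matlis structure theorem for graded injective $A$-modules, exactness and minimality preservation under graded localization at a graded prime, and the standard fact that minimality forces the induced differentials in $\grHom(\kappa(\frkp),-)$ to vanish.

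First I would establish the graded Matlis decomposition: every graded injective $A$-module is uniquely (up to cardinalities) of the form $\bigoplus_{(\frkp,i)\in \grSpec A \times \ZZ} E_A(A/\frkp)(-i)^{\oplus \lambda(\frkp,i)}$. Since $A$ is graded Noetherian, the classical Matlis arguments transfer verbatim to $\GrMod A$: each $E_A(A/\frkp)(-i)$ is indecomposable because its socle $(A/\frkp)(-i)$ is simple in $\GrMod A$, these exhaust the indecomposable graded injectives, and a Zorn-plus-injectivity-implies-summand argument splits off a maximal subsum of this form, which by maximality must be all of $I$. Applying this to $I^n$ produces a decomposition with some cardinalities $\lambda_n(\frkp,i)$.

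Second, I would identify $\lambda_n(\frkp,i)$ with $\mu^n_i(\frkp,M)$ by localizing at $\frkp$. Graded localization is exact in $\GrMod A$, sends $E_A(A/\frkq)(-j)$ to $0$ when $\frkq\not\subset\frkp$ and to $E_{A_\frkp}(\kappa(\frkq))(-j)$ otherwise, and preserves minimality of a graded injective resolution; hence $I^\bullet_\frkp$ is a minimal graded injective resolution of $M_\frkp$ over $A_\frkp$. Minimality forces the induced differentials on $\grHom_{A_\frkp}(\kappa(\frkp), I^\bullet_\frkp)$ to vanish (their image lies inside the socle but factors through $\frkp A_\frkp\cdot I^{n+1}_\frkp$, which meets the socle trivially). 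Combining with the orthogonality computation
\[
\grHom_{A_\frkp}\!\bigl(\kappa(\frkp),\,E_{A_\frkp}(\kappa(\frkq))(-j)\bigr)_i \cong
\begin{cases} \kappa(\frkp) & \frkq=\frkp\text{ and }i=j,\\ 0 & \text{otherwise,}\end{cases}
\]
one obtains $\grExt_A^n(\kappa(\frkp), M_\frkp)_i\cong \kappa(\frkp)^{\oplus \lambda_n(\frkp,i)}$, which identifies $\lambda_n(\frkp,i)=\mu^n_i(\frkp,M)$ and proves the first formula.

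The second formula follows by applying $\frks_j$ termwise to the first decomposition. Using the lemma preceding the theorem ($E_A(A/\frkp)\cong \grHom_\Lambda(A,E_\Lambda(\Lambda/\frkp_0))$) together with the adjunction $\grHom_A(\Lambda, \grHom_\Lambda(A,N))\cong N$, a direct check gives $\frki_j\bigl(E_A(A/\frkp)(-k)\bigr)=\delta_{jk}\,E_\Lambda(\Lambda/\frkp_0)$, whence $\frks_j\bigl(E_A(A/\frkp)(-k)\bigr)=\delta_{jk}\,E_A(A/\frkp)(-j)$; summing over the decomposition of $I^n$ extracts exactly the shift-$i$ summands. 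The main obstacle is the careful verification of the graded-localization/minimality step: it requires a homogeneous Nakayama-type statement for the $\ZZ$-graded local ring $(A_\frkp, \frkp A_\frkp)$ and a check that indecomposable graded injectives localize as claimed (in particular the vanishing when $\frkq\not\subset \frkp$, which is delicate because $A$ is only non-negatively graded). Once these foundational graded analogs are in place, the rest is bookkeeping.
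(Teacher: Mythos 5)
The paper states this theorem without proof, merely citing it as the graded analogue of Matsumura's Theorem 18.7, and your proposal reproduces exactly the intended adaptation: the graded Matlis structure theorem, identification of multiplicities via graded localization at $\frkp$ together with minimality, and the termwise computation $\frks_j\bigl(E_A(A/\frkp)(-k)\bigr)=\delta_{jk}E_A(A/\frkp)(-j)$ via the preceding lemma. This matches the paper's (implicit) approach, so the proposal is correct.
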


\begin{theorem}[{A graded version of \cite[Theorem 18.8]{Matsumura}}]\label{Bass Gorenstein theorem}
Let $A$
be a  commutative local finitely graded Gorenstein algebra. 
Then, 
\[
\begin{split}
\mu^{n}_{i}(\frkp, A)  &= 0 \textup{ for } n \neq \textup{ht} \frkp, i \in \ZZ\\
\sum_{i \in \ZZ} \mu^{\textup{ht} \frkp }_{i}(\frkp, A)  & = 1.
\end{split}
\]
\end{theorem}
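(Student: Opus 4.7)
The plan is to transcribe the classical proof of the ungraded analog \cite[Theorem 18.8]{Matsumura} to the graded setting, using three ingredients: reduction to the graded maximal ideal via graded localization, standard vanishing from graded injective dimension and depth, and a Koszul computation of the top-degree Ext. Throughout, $A$ denotes a commutative finitely graded local Gorenstein algebra, with graded maximal ideal $\frkm = \frkm_{0} \oplus A_{+}$.

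\textbf{Step 1 (Reduction to $\frkp = \frkm$).} Every graded prime has the form $\frkp = \frkp_{0} \oplus A_{+}$ since $A_{+}$ is nilpotent, so graded localization reduces to base change $A_{\frkp} = A \otimes_{A_{0}} (A_{0})_{\frkp_{0}}$. Hence $A_{\frkp}$ is again commutative finitely graded local (with the same maximal degree $\ell$) and Gorenstein. Flatness of graded localization together with preservation of $\kappa(\frkp)$ yields $\mu^{n}_{i}(\frkp, A) = \mu^{n}_{i}(\frkp A_{\frkp}, A_{\frkp})$, and $\height \frkp$ equals the graded Krull dimension of $A_{\frkp}$. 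So one may assume $\frkp = \frkm$ with $d := \height \frkm$ equal to the graded Krull dimension of $A$.

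\textbf{Step 2 (Vanishing outside $n = d$).} Graded Gorensteinness gives graded injective dimension $d$, hence $\mu^{n}_{i}(\frkm, A) = 0$ for $n > d$ by the minimal injective resolution description of Theorem \ref{Bass injective decomposition theorem}. For $n < d$, graded Gorenstein implies graded Cohen--Macaulay, so the graded depth (the minimal $n$ with $\grExt^{n}_{A}(\kappa, A) \neq 0$) equals $d$, yielding the desired vanishing.

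\textbf{Step 3 (Top degree sum equals one).} Pick a homogeneous system of parameters $x_{1}, \dots, x_{d} \in \frkm$, which by Cohen--Macaulayness is an $A$-regular sequence; set $s := \sum_{j} \deg x_{j}$ and $B := A/(x_{1}, \dots, x_{d})$. The graded Koszul resolution of $B$ over $A$ computes $\grExt^{d}_{A}(B, A) \cong B(s)$ (the shift $s$ coming from the top exterior power of the degrees), and then the standard change-of-rings isomorphism for a regular sequence gives
\[
\grExt^{d}_{A}(\kappa, A) \cong \grHom_{B}(\kappa, B(s)) = \mathrm{soc}(B)(s).
\]
It therefore suffices to verify that $\mathrm{soc}(B)$ is one-dimensional over $\kappa$, concentrated in a single internal degree. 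But $B$ is a commutative graded Artinian local Gorenstein ring, hence graded self-injective of finite length; and $B$ is indecomposable as a graded $B$-module since any graded idempotent of $B$ lies in $B_{0}$, which inherits locality from $A_{0}$. Since $E_{B}(\kappa)$ is, up to internal shift, the unique indecomposable graded injective over $B$, we conclude $B \cong E_{B}(\kappa)(-s_{0})$ for a unique shift $s_{0}$, and hence $\mathrm{soc}(B) \cong \kappa(-s_{0})$ is one-dimensional.

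\textbf{Main obstacle.} The substantive work lies in Step 3: tracking the internal degree shift through the graded Koszul resolution and change of rings, and establishing the identification $B \cong E_{B}(\kappa)(-s_{0})$ using graded self-injectivity together with the indecomposability that comes from $B_{0}$ being local. Both are careful graded refinements of entirely classical commutative-algebra arguments; Steps 1 and 2 are routine once the standard graded Noetherian toolkit (flat base change, depth--dimension equality in the Cohen--Macaulay case, Bass formula) is transcribed to the finitely graded commutative setting.
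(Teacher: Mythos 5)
The paper states Theorem \ref{Bass Gorenstein theorem} without proof, presenting it simply as ``a graded version of \cite[Theorem 18.8]{Matsumura}''; there is therefore no in-paper argument against which to compare yours. Your proof is a faithful and essentially correct transcription of the classical argument into the graded setting. Two remarks. First, since $A$ is finitely graded, $A_{+}$ is nilpotent, so every homogeneous $A$-regular element necessarily has degree zero; hence your internal shift $s$ is zero (the paper makes exactly this observation inside the proof of Theorem \ref{commutative Gorenstein theorem}). Second, the appeal in Step 3 to the graded Matlis classification of indecomposable graded injectives over the Artinian ring $B$ is correct but should be cited or proved explicitly.

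There is also a shorter route to the statement that avoids the Koszul and Matlis-theory computations of Step 3 altogether. For a graded prime $\frkp = \frkp_{0}\oplus A_{+}$, the graded localization $A_{\frkp}$ agrees with the ungraded localization (an inhomogeneous element $a = a_{0}+a_{1}+\cdots$ lies outside $\frkp$ iff $a_{0}\notin\frkp_{0}$, and $a$ differs from $a_{0}$ by a nilpotent, hence becomes a unit once $a_{0}$ does), and is a local Gorenstein ring of dimension $\height\frkp$. The ungraded Bass theorem therefore gives $\dim_{\kappa(\frkp)}\Ext^{n}_{A_{\frkp}}(\kappa(\frkp),A_{\frkp})=\delta_{n,\height\frkp}$. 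Because $\kappa(\frkp)$ is finitely generated and $A_{\frkp}$ is graded Noetherian, $\grExt^{n}_{A_{\frkp}}(\kappa(\frkp),A_{\frkp})\cong\Ext^{n}_{A_{\frkp}}(\kappa(\frkp),A_{\frkp})$ as $\kappa(\frkp)$-modules, and the left side is $\bigoplus_{i}\grExt^{n}_{A_{\frkp}}(\kappa(\frkp),A_{\frkp})_{i}$, so $\sum_{i}\mu^{n}_{i}(\frkp,A)=\delta_{n,\height\frkp}$. This yields both assertions simultaneously, the vanishing for $n\neq\height\frkp$ following because each $\mu^{n}_{i}$ is a non-negative integer. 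Your more explicit approach has the advantage of giving an internal description of the generator of the top Ext, which is structurally closer to what the paper actually needs in the subsequent proof of Theorem \ref{commutative Gorenstein theorem}.
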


We remark that 
for $M \in \grmod A$ and $\frkp \in \grSpec A$,  
$\mu^{n}_{i}(\frkp, M) \neq 0$ 
if and only if $\grExt_{A}^{n}(A/\frkp, M)_{i} \neq 0$. 
The  lemma below is   a graded version of  \cite[Lemma 18.3]{Matsumura} .

\begin{lemma}\label{graded one up lemma}
Let $M \in \grmod A$ and 
$\frkp \subset  \frkq$ graded prime ideals of $A$.  
Assume that the Krull dimension $\Kdim A_{\frkq}/\frkp_{\frkq} = 1$. 
If $\mu_{i}^{n}(\frkp, M) \neq 0$ 
for some $n \geq 0$ and $ i \in \ZZ$, 
then 
$\mu^{n+1}_{i}(\frkq, M)  \neq 0$. 
\end{lemma}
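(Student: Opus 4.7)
The approach adapts the classical proof of \cite[Lemma 18.3]{Matsumura} to the graded setting, exploiting the nilpotence of $A_+$ to gain control over degrees. I first localize at $\frkq$: since $A_+$ is nilpotent it is contained in every graded prime, so graded primes of $A$ correspond to primes of $A_0$ via $\frkp \mapsto \frkp_0 := \frkp \cap A_0$, and the graded localization inverts only homogeneous elements of $A_0 \setminus \frkq_0$. Thus $(A_0, \frkq_0)$ becomes local, $A$ remains finitely graded, and the Bass numbers are preserved. Since $\frkp_0 \subsetneq \frkq_0$ and $\Kdim A_0/\frkp_0 = 1$, I pick $x \in \frkq_0 \setminus \frkp_0$; this is a homogeneous element of degree $0$ and a nonzerodivisor on the graded domain $A/\frkp$.

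The crucial graded observation is that because $A_+ \subset \frkp$, we have $(\frkp + xA)_j = A_j$ for every $j \geq 1$, so $A/(\frkp + xA) \cong A_0/(\frkp_0, x)$ is concentrated in degree $0$ as a graded $A$-module. The short exact sequence $0 \to A/\frkp \xrightarrow{\cdot x} A/\frkp \to A_0/(\frkp_0, x) \to 0$ (all maps of degree $0$) yields, in degree $i$,
\[
\grExt^n(A/\frkp, M)_i \xrightarrow{\cdot x} \grExt^n(A/\frkp, M)_i \to \grExt^{n+1}(A_0/(\frkp_0, x), M)_i \to \cdots.
\]
Setting $E := \grExt^n(A/\frkp, M)$, each $E_i$ is a finitely generated $A_0$-module, and the hypothesis $\mu_i^n(\frkp, M) \neq 0$ is equivalent to $(E_i)_{\frkp_0} \neq 0$, and in particular $E_i \neq 0$. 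Nakayama's lemma over $(A_0, \frkq_0)$ with $x \in \frkq_0$ then forces $E_i / xE_i \neq 0$, and consequently $\grExt^{n+1}(A_0/(\frkp_0, x), M)_i \neq 0$. Finally, $A_0/(\frkp_0, x)$ is an Artinian $A_0$-module supported at $\frkq_0$, viewed as a graded $A$-module concentrated in degree $0$; all of its graded composition factors are isomorphic to $A/\frkq$ with \emph{no} degree shift. By induction on the composition length using the long exact sequences in graded Ext attached to the composition series, the nonvanishing in degree $i$ propagates to $\grExt^{n+1}(A/\frkq, M)_i \neq 0$, which is exactly $\mu_i^{n+1}(\frkq, M) \neq 0$.

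The main technical feature distinguishing this graded proof from the ungraded Matsumura argument is the choice of $x$ in degree $0$, which forces $A/(\frkp + xA)$ to lie in a single graded component. Without this simplification the composition-series step would yield only $\mu_{i+j}^{n+1}(\frkq, M) \neq 0$ for some $j \in [0, \ell]$, not the sharper degree-$i$ statement in the lemma. The nilpotence of $A_+$---a defining feature of finitely graded algebras---is what makes this degree-$0$ choice of $x$ available and is the key new ingredient beyond the classical proof.
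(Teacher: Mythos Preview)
Your proof is correct and is precisely the adaptation of \cite[Lemma 18.3]{Matsumura} that the paper intends; the paper itself gives no proof beyond citing that reference, so you have supplied what the authors leave implicit. Your identification of the key graded point---that $A_{+}\subset\frkp$ forces $A/(\frkp+xA)$ to sit in degree $0$, so no degree shifts appear in the composition series---is exactly the new ingredient needed over the ungraded argument.
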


\begin{proof}[Proof of Theorem \ref{commutative Gorenstein theorem}]

Let $I$ be a minimal graded injective resolution of $A$.  We may regard $I$ as an object of $\sfC(\GrInj A)$. 
We prove $A$ is hwg by showing $\frks_{i}I = 0$ for $i \neq \ell$. 
By Theorem \ref{Bass injective decomposition theorem} 
and Theorem \ref{Bass Gorenstein theorem}  
it is enough to show that $\mu^{\textup{ht} \frkp}_{i}(\frkp, A) = 0$ for $i \neq \ell$. 
 We set $d:= \Kdim A$ to the Krull dimension of $A$. 
By the assumption we have 
the equalities 
\[
d= \Kdim A = \depth A = \injdim A = \grinjdim A
\] 
where the last equality is proved in  \cite[Proposition 2.11]{adasore}.

First, we claim that $\mu_{\ell}^{d}(\frkm, A) \neq 0$. 
Let $x$ be a homogeneous $A$-regular element. 
Since $A$ is finitely graded, we have $\deg x= 0$ 
and hence  $\mu^{d}_{\ell}(\frkm, A) = \mu^{d-1}_{\ell}(\frkm, A/x)$. 
Moreover the residue algebra $A/xA$ satisfies the assumptions of Theorem \ref{commutative Gorenstein theorem} 
by \cite[Proposition 3.1,19]{Bruns-Herzog}.
Therefore, 
since, by \cite[Proposition 1.5.11]{Bruns-Herzog}, 
there exists a $A$-regular sequence $x_{1} , \cdots, x_{d}$ 
consisting of homogeneous elements, the claim reduced to the case $d= 0$. 
In the case $d= 0$, 
then $A$ is isomorphic to a degree shift of $E_{A}(k)= \grHom_{A_{0}}(A, E_{A_{0}}(k))$. 
Since the functor $\Hom_{A_{0}}(-, E_{A_{0}}(k) )$ is faithful, 
we have \[
\max\{ i \in \ZZ \mid E_{A}(k)_{i}\neq 0 \} = 0, 
\min\{ i \in \ZZ \mid E_{A}(k)_{i}\neq 0 \} = -\ell.
\]
Thus, by comparing the degree of $A$ and $E_{A}(k)$, 
we deduce that $A \cong E_{A}(k)(-\ell)$. 
This proves the claim.

By Theorem \ref{Bass Gorenstein theorem}, we deduce $\mu_{i}^{d}(\frkm, A) = 0$ for $ i \neq \ell$ from the claim. 

Assume that  $\mu^{\textup{ht} \frkp}_{i}(\frkp, A) \neq 0$ for some $ i \neq \ell$. 
Then by Lemma \ref{graded one up lemma}, we have 
$\mu^{d}_{i}(\frkm, A) \neq 0$. 
This contradict to what we have proved. 
Thus, $\mu^{\textup{ht} \frkp}_{i}(\frkp, A) =0$ for $i \neq \ell$.
This completes the proof. 
\end{proof}

\section{Remark: graded derived Frobenius extensions}\label{Graded derived Frobenius extensions}

Recall that a Frobenius algebra $A$ is an algebra which possesses a symmetry 
that a regular module $A_{A}$ is isomorphic to the dual module $\Hom_{\kk}(A, \kk)$. 
\begin{equation}\label{Frobenius algebra} 
 \Hom_{\kk}(A,\kk) \cong A. 
\end{equation}
An important fact is that
this isomorphism (\ref{Frobenius algebra}) implies that $A$ is self-injective. 
We emphasize that Frobenius algebras have applications such as 
topological field theory (see e.g. \cite{Kock}) 
because of the symmetry (\ref{Frobenius algebra}).

We mention that there exists a generalization of 
Frobenius algebras defined by existence of a symmetry like (\ref{Frobenius algebra}), that is, \textit{Frobenius extensions}.
A Frobenius extension is  an algebra extension $\Lambda \subset A$ 
such that $A$ is finitely generated projective left $\Lambda$-module 
and that there exists an isomorphism of $\Lambda$-$A$-bimodules 
\begin{equation}\label{Frobenius extension} 
\Hom_{\Lambda}(A,\Lambda) \cong A. 
\end{equation}
Frobenius extensions are related to other areas and   have been studied by many researchers (see for example 
\cite{Kadison}). 
However, in general this isomorphism (\ref{Frobenius extension}) does not implies that $A$ is Frobenius or IG. 

We propose another generalization of Frobenius algebras, which turns out to be IG, characterized by existence of a symmetry.   
As is stated in Theorem \ref{cotilting theorem}, 
a cotilting bimodule induces a duality between the derived categories of $A$ and $A^{\op}$. 
Thus, it is natural to take an analogy of the isomorphism \eqref{Introduction graded Frobenius 2} by using a cotilting bimodule $C$.  

Let $\Lambda$ be a Noetherian algebra and $C$ a cotilting bimodule over $\Lambda$.  
An algebra extension $\Lambda \subset A$ is called \textit{derived Frobenius extension} with respect to a cotilting bimodule $C$ 
if there exists an isomorphism in the derived category $\sfD(\mod \Lambda^{\op} \otimes_{\kk}A)$.   
\begin{equation}\label{derived Frobenius extension} 
\RHom_{\Lambda}(A,C) \cong A. 
\end{equation}
In other words, $\Hom_{\Lambda}(A, C)  \cong A$ and $\Ext_{\Lambda}^{> 0}(A, C) = 0$. 
We can show that  the isomorphism (\ref{derived Frobenius extension}) implies that $A$ is IG in the same way as Theorem \ref{main theorem}.
\begin{proposition}\label{derived Frobenius extension is IG}
 If $\Lambda \subset A$ is a derived Frobenius extension, then $A$ is IG. 
\end{proposition}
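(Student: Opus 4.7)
\medskip

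\noindent\textbf{Proof proposal.} The plan is to mimic the argument used for the implication $(4)\Rightarrow(1)$ of Theorem \ref{main theorem}, now in the ungraded setting where the role of the highest-degree piece $A_{\ell}$ is played by the cotilting bimodule $C$. First I would unpack the derived Frobenius hypothesis: the isomorphism $\RHom_{\Lambda}(A,C)\cong A$ in $\sfD(\mod \Lambda^{\op}\otimes_{\kk}A)$ is equivalent to the conjunction of a $(\Lambda,A)$-bimodule isomorphism $\Hom_{\Lambda}(A,C)\cong A$ together with the vanishing $\Ext_{\Lambda}^{>0}(A,C)=0$.

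To establish $\injdim_{A}A_{A}<\infty$, I would take a finite injective resolution $C\to J^{\bullet}$ in $\mod \Lambda$ (this exists because $\injdim_{\Lambda}C<\infty$ by cotiltingness). The key ingredient is the standard fact that since restriction $\Mod A\to \Mod \Lambda$ along $\Lambda\subset A$ is exact, its right adjoint $\Hom_{\Lambda}(A,-)$ sends injective right $\Lambda$-modules to injective right $A$-modules. Combined with the Ext-vanishing, applying $\Hom_{\Lambda}(A,-)$ to $J^{\bullet}$ produces a finite injective resolution of $\Hom_{\Lambda}(A,C)\cong A$ as a right $A$-module, so $\injdim_{A}A_{A}<\infty$. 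This is a direct translation of the corresponding step in the proof of Theorem \ref{main theorem}.

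For the left side, I would use the cotilting duality of Theorem \ref{cotilting theorem} to upgrade the right-sided hypothesis to a left-sided one. Namely, applying $\RHom_{\Lambda^{\op}}(-,C)$ to the isomorphism $A\cong \RHom_{\Lambda}(A,C)$ and invoking cotilting biduality yields
\[
\RHom_{\Lambda^{\op}}(A,C)\cong \RHom_{\Lambda^{\op}}(\RHom_{\Lambda}(A,C),C)\cong A,
\]
in $\sfD(\mod A\otimes_{\kk}\Lambda^{\op})$ (with bimodule structures induced in the obvious way, as traced through the proof of Theorem \ref{main theorem}). Repeating the injective-resolution argument for left modules then gives $\injdim_{A^{\op}}{}_{A}A<\infty$.

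Finally, Noetherianness of $A$ on both sides is to be obtained from the standing assumption (implicit in the setup, as in Theorem \ref{main theorem}) that $A$ is finitely generated on both sides over the Noetherian algebra $\Lambda$; any left or right $A$-submodule of $A$ is then finitely generated as a $\Lambda$-module, hence as an $A$-module. The main obstacle I anticipate is the careful bookkeeping of bimodule structures: one must verify that the isomorphism $A\cong\RHom_{\Lambda}(A,C)$ is compatible with the right $A$-module structure, so that the injective resolution $\Hom_{\Lambda}(A,J^{\bullet})$ resolves $A$ as an $A$-module and not merely as a $\Lambda$-module, and that the cotilting biduality step genuinely interchanges the left and right roles. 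Once these compatibilities are recorded, the rest of the proof is a formal replay of the ungraded analogue of $(4)\Rightarrow(1)$ in Theorem \ref{main theorem}.
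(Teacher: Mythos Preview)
Your proposal is correct and follows exactly the approach the paper intends: the paper does not write out a proof of Proposition~\ref{derived Frobenius extension is IG} at all, but simply points to the implication $(4)\Rightarrow(1)$ of Theorem~\ref{main theorem}, and your argument is precisely the ungraded translation of that implication (finite injective resolution of $C$, push forward via $\Hom_{\Lambda}(A,-)$ using the Ext-vanishing, then invoke cotilting duality for the left side). Your caveat about the implicit finiteness hypothesis on $A$ over $\Lambda$ is also apt, since Section~\ref{Graded derived Frobenius extensions} is deliberately informal on this point.
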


In a similar way, as a generalization of a graded Frobenius algebra, 
we may define  a graded derived Frobenius extension.
\begin{definition}
Let $A= \bigoplus_{i = 0}^{\ell}A_{i}$ be a  finitely graded Noetherian algebra with $\Lambda := A_{0}$. 
Then $A$ is called \emph{a graded derived Frobenius extension} of $\Lambda$ 
if there exists an isomorphism 
\[
\hat{\alpha}: A \cong \grRHom_{\Lambda}(A, A_{\ell})(-\ell)
\]
in the derived category of $\Lambda$-$A$-bimodules. 
\end{definition}

We can rephrase Theorem \ref{main theorem} as follows: 
the graded algebra extension $A_{0} \subset A$ is a graded derived Frobenius extension 
if and only if $A$ is a hwg IG-algebra.

{\small 

}

{H.M. Department of Mathematics and Information Sciences,
Faculty of Science / Graduate School of Science,
Osaka Prefecture University}

{minamoto@mi.s.osakafu-u.ac.jp} 

$ $

{K.Y. Graduate Faculty of Interdisciplinary Research, Faculty of Engineering
University of Yamanashi}

{kyamaura@yamanashi.ac.jp}

\end{document}